  \newtheorem{assumption}{Assumption}
    \newcommand{\keywords}[1]{\par\addvspace\baselineskip  
     \noindent\keywordname\enspace\ignorespaces#1}
\newcommand{\PathToPic}{./Pictures/}
  \newcommand{\genref}[2]{#2\,\ref{#1}}
  \newcommand{\thmref}[1]{\genref{#1}{Theorem}}
  \newcommand{\lemref}[1]{\genref{#1}{Lemma}}
  \newcommand{\propref}[1]{\genref{#1}{Proposition}}
  \newcommand{\corref}[1]{\genref{#1}{Corrolary}}
  \newcommand{\equref}[1]{(\ref{#1})} 
  \newcommand{\secref}[1]{Section\,\ref{#1}}
  \newcommand{\tabref}[1]{Table\,\ref{#1}}
  \newcommand{\figref}[1]{Figure\,\ref{#1}}
 \newcommand{\mc}[3]{\multicolumn{#1}{#2}{#3}}
  \newcommand{\MatThree}[9]
  {
    \begin{bmatrix}
    #1 & #2 & #3\\
    #4 & #5 & #6\\
    #7 & #8 & #9
    \end{bmatrix}
  }
  \newcommand{\MatTwo}[4]
  {\begin{bmatrix}
    #1 & #2\\
    #3 & #4
  \end{bmatrix}}
  \newcommand{\VecTwo}[2]
  {\begin{bmatrix}
    #1 \\
    #2 
  \end{bmatrix}}
  \newcommand{\VecThree}[3]
  {\begin{bmatrix}
    #1 \\
    #2 \\
    #3
  \end{bmatrix}}
  \newcommand{\DN}[1]{\frac{\partial #1}{\partial n}}
  \newcommand{\dd}[2]{\frac{\partial #1}{\partial #2}}
  \newcommand{\real}[1][]
  {
    \ifthenelse{\isempty{#1}}%
    {\mathbb{R}}
    {\mathbb{R}^{{#1}}}
  }
    \newcommand{\nat}[1][]
  {
    \ifthenelse{\isempty{#1}}%
    {\mathbb{N}}
    {\mathbb{N}^{#1}}
  }
  \DeclareMathOperator{\mdiv}{div}
  \DeclareMathOperator{\grad}{\nabla}
  \renewcommand{\ker}[1]{\text{ker}(#1)}
    \newcommand{\supp}{\text{supp}}
    \newcommand{\spn}{\text{span}}
    \newcommand{\diam}{\text{diam}}
    \newcommand{\for}{\mbox{for }}
    \newcommand{\range}[1]{\mathcal{R}(#1)}
  \newcommand{\bil}[3][]
  {
  \ifthenelse{\isempty{#1}}%
    {a(#2,#3)}
    {a\sMP[#1](#2,#3)}
  }
  \newcommand{\func}[3][]
  {
  \ifthenelse{\isempty{#1}}%
    {\left\langle #2, #3 \right\rangle}
    {\left\langle #2\sMP[#1], #3 \right\rangle}
  }
  \newcommand{\mat}[1]{\boldsymbol{#1}}
  \renewcommand{\vec}[1]{\boldsymbol{#1}}
  \newcommand{\LTwoNorm}[2]{\left\|#1\right\|_{L^2(#2)}}
  \newcommand{\LInfNorm}[2]{\left\|#1\right\|_{L^\infty(#2)}}
  \newcommand{\HSNorm}[3]{|#2|_{H^{#1}(#3)}}
  \newcommand{\HOneSNorm}[2]{\HSNorm{1}{#1}{#2}}
  \newcommand{\DiscLLocNorm}[2]{|#1|_{#2}}
  \newcommand{\DiscLNorm}[1]{|#1|_{\Square}}
  \newcommand{\DiscHNorm}[1]{|#1|_{\nabla}}
  \newcommand{\RealSNorm}[1]{\left\|\!\left|#1\right\|\!\right|_{\nabla}}
  \newcommand{\RealTNorm}[1]{\left\|\!\left|#1\right\|\!\right|_{e}}
\newcommand{\x}[1][]{\ifthenelse{\isempty{#1}}{x}{x_{#1}}}
\newcommand{\g}[1]{\gBSplFormat{#1}}
\newcommand{\pDim}{{d}}
\newcommand{\gDim}{g}
\newcommand{\pDom}[1][]{\ifthenelse{\isempty{#1}}{(0,1)^\pDim}{\ifthenelse{\equal{#1}{2}}{[0,1]^2}{(0,1)^\pDim}}}
\newcommand{\gDom}{\Omega}
\newcommand{\intBaseLetter}{\Gamma}
\newcommand{\gIntG}[2]{\intBaseLetter^{(#1,#2)}}
\newcommand{\gInt}{\gIntG{k}{l}}
\newcommand{\gIntMP}{\intBaseLetter}
\newcommand{\nMP}{N}
\newcommand{\iMP}{{k}}
\newcommand{\sMP}[1][]{\ifthenelse{\isempty{#1}}{^{(\iMP)}}{^{(#1)}}}
\newcommand{\iSet}{{\mathcal{I}}}
\newcommand{\iSetMesh}{\iSet_{\pMeshBase}}
\newcommand{\iVar}{\iota}
\newcommand{\kn}[1][]{\ifthenelse{\isempty{#1}}{\xi}{\xi_{#1}}}
\newcommand{\knV}{\Xi}
\newcommand{\nkn}{m}
\newcommand{\nIntKnotspan}{\nu}
\renewcommand{\deg}{p}
\newcommand{\mul}{r}
\newcommand{\IndexSet}{\mathcal{B}}
\newcommand{\BSplBaseLetter}{N}
\newcommand{\gBSplFormat}[1]{\check{#1}}
\newcommand{\gBSplBaseLetter}{\gBSplFormat{\BSplBaseLetter}}
\newcommand{\BSplG}[2]{\BSplBaseLetter_{#1,#2}}
\newcommand{\BSpl}{\BSplG{i}{\deg}}
\newcommand{\BSplFamG}[2]{\{\BSplG{#1}{#2}\}_{#1\in\iSet}}
\newcommand{\BSplFam}{\BSplFamG{i}{\deg}}
\newcommand{\gBSplG}[2]{\gBSplBaseLetter_{#1,#2}}
\newcommand{\gBSpl}{\gBSplG{i}{\deg}}
\newcommand{\gBSplFamG}[3][]{\ifthenelse{\isempty{#1}}
{\{\gBSplG{#2}{#3}\}_{#2\in\iSet}}
{\{\gBSplG{#2}{#3}\sMP[#1]\}_{#2\in\iSet\sMP[#1]}}}
\newcommand{\gBSplFam}{\gBSplFamG{i}{\deg}}
\newcommand{\nBasis}{M}
\newcommand{\pBaseFormat}[1]{\hat{#1}}
\newcommand{\meshBaseLetter}{Q}
\newcommand{\meshBaseFormat}{\mathcal{\meshBaseLetter}}
\newcommand{\pMeshBase}{\pBaseFormat{\meshBaseFormat}}
\newcommand{\pMesh}{\pMeshBase_h}
\newcommand{\gMesh}{\meshBaseFormat_h}
\newcommand{\gCell}[1][]{\ifthenelse{\isempty{#1}}{\meshBaseLetter}{\meshBaseLetter_{#1}}}
\newcommand{\pCell}[1][]{\ifthenelse{\isempty{#1}}{\pBaseFormat{\meshBaseLetter}}{\pBaseFormat{\meshBaseLetter}_{#1}}}
\newcommand{\Cp}[1][]{\ifthenelse{\isempty{#1}}{P_{i}}{P_{#1}}}
\newcommand{\geoMap}{G}
\newcommand{\baseV}{V}
\newcommand{\genSp}[3][]{\ifthenelse{\isempty{#1}}{#2_{#3}}{#2_{#3,#1}}}
\newcommand{\gVh}{\genSp{\baseV}{h}}
\newcommand{\pVh}{\genSp{\mathrm S}{h}}
\newcommand{\gVB}{\genSp[h]{\baseV}{\Gamma}}
\newcommand{\gVI}{\genSp[h]{\baseV}{I}}
\newcommand{\VD}[1][]{\genSp[#1]{\baseV}{D}}
\newcommand{\VDh}{\VD[h]}
\newcommand{\genLocSpace}[3][]{\ifthenelse{\isempty{#1}}{#2_{#3}}{#2_{#3}\sMP[#1]}}
\newcommand{\IntermSpaceForamt}[1]{\widetilde{#1}}
\newcommand{\ContSpaceFormat}[1]{\widehat{#1}}
\newcommand{\base}{W}
\newcommand{\hW}{\ContSpaceFormat{\base}}
\newcommand{\tW}{\IntermSpaceForamt{\base}}
\newcommand{\sB}{_B}
\newcommand{\sI}{_I}
\newcommand{\setB}{\iSet\sB} 
  \newcommand{\emb}{\IntermSpaceForamt{I}}
  \newcommand{\tB}{\IntermSpaceForamt{B}}
  \newcommand{\tg}{\IntermSpaceForamt{g}}
  \newcommand{\tphi}{\IntermSpaceForamt{\phi}}
  \newcommand{\nLag}{\Lambda}
  \newcommand{\LamSet}{U}
  \newcommand{\tLamSet}{\IntermSpaceForamt{U}}
 \newcommand{\LocVB}[1][]{\ifthenelse{\isempty{#1}}{\base}{\ifthenelse{\equal{#1}{L}}{\overline{\base}}{\base\sMP[#1]}}}
  \newcommand{\Wp}[1][]{\genLocSpace[#1]{\base}{\Pi}}     
  \newcommand{\Wd}[1][]{\genLocSpace[#1]{\base}{\Delta}}  
  \newcommand{\tWp}[1][]{\genLocSpace[#1]{\tW}{\Pi}}
  \newcommand{\tWd}[1][]{\genLocSpace[#1]{\tW}{\Delta}}
 \newcommand{\GenOp}[2][]{
  \ifthenelse{\isempty{#1}}
    {#2}
    {#2\sMP[#1]}
  }
  \newcommand{\Sop}[1][]{\GenOp[#1]{S}}
  \newcommand{\tSop}[1][]{\IntermSpaceForamt{\Sop{#1}}} 
  \newcommand{\Kop}[1][]{\GenOp[#1]{K}}
  \newcommand{\Ci}[1][]{\GenOp[#1]{C}}
  \newcommand{\np}[1][]{ \ifthenelse{\isempty{#1}} {n_{\Pi}} {n_{\Pi}\sMP[#1]} }
  \newcommand{\GenMatXX}[3][]
  {
  \ifthenelse{\isempty{#1}}%
    {#2_{#3}}
    {#2_{#3}\sMP[#1]}
  }
  \newcommand{\KBB}[1][]{ \GenMatXX[#1]{\Kb}{BB}}
  \newcommand{\KIB}[1][]{ \GenMatXX[#1]{\Kb}{IB}}
  \newcommand{\KBI}[1][]{ \GenMatXX[#1]{\Kb}{BI}}  
  \newcommand{\KII}[1][]{ \GenMatXX[#1]{\Kb}{II}}
  \newcommand{\KBBop}[1][]{ \GenMatXX[#1]{\Kop}{BB}}
  \newcommand{\KIBop}[1][]{ \GenMatXX[#1]{\Kop}{IB}}
  \newcommand{\KBIop}[1][]{ \GenMatXX[#1]{\Kop}{BI}}  
  \newcommand{\KIIop}[1][]{ \GenMatXX[#1]{\Kop}{II}}
  \newcommand{\SPP}[1][]{ \GenMatXX[#1]{\Sb}{\Pi\Pi}}
  \newcommand{\SPPop}[1][]{ \GenMatXX[#1]{\Sop}{\Pi\Pi}}
  \newcommand{\SDPop}[1][]{ \GenMatXX[#1]{\Sop}{\Delta\Pi}}
  \newcommand{\SPDop}[1][]{ \GenMatXX[#1]{\Sop}{\Pi\Delta}}
  \newcommand{\SDDop}[1][]{ \GenMatXX[#1]{\Sop}{\Delta\Delta}}
  \newcommand{\RestrBD}[1][]{ \GenMatXX[#1]{{R}}{B\Delta}}
  \newcommand{\RestrBP}[1][]{ \GenMatXX[#1]{{R}}{B\Pi}}
  \newcommand{\RestrD}[1][]{ \GenMatXX[#1]{{R}}{\Delta}}  
  \newcommand{\RestrP}[1][]{ \GenMatXX[#1]{{R}}{\Pi}}
  \newcommand{\RestrScB}[1][]{ \GenMatXX[#1]{{R}}{D,B}}
  \newcommand{\RestrScD}[1][]{ \GenMatXX[#1]{{R}}{D,\Delta}}
  \newcommand{\wP}[1][]
  {
    \ifthenelse{\isempty{#1}}%
    {\boldsymbol{w}_{\Pi}}
    {\boldsymbol{w}_{\Pi}\sMP[#1]}
  }
  \newcommand{\wD}[1][]
  {
    \ifthenelse{\isempty{#1}}%
    {w_{\Delta}}
    {w_{\Delta}\sMP[#1]}
  }
  \newcommand{\fP}[1][]
  {
    \ifthenelse{\isempty{#1}}%
    {\boldsymbol{f}_{\Pi}}
    {\boldsymbol{f}_{\Pi}\sMP[#1]}
  }
  \newcommand{\fD}[1][]
  {
    \ifthenelse{\isempty{#1}}%
    {f_{\Delta}}
    {f_{\Delta}\sMP[#1]}
  }
  \newcommand{\Scal}[1][]{  \ifthenelse{\isempty{#1}}{{\delta^{\dagger}}}{{\delta_{#1}^{\dagger}}}}
  \newcommand{\DHE}[1][]{  \ifthenelse{\isempty{#1}}{\mathcal{H}}{\mathcal{H}\left(#1\right)}}
  \newcommand{\DHEreal}[1][]{  \ifthenelse{\isempty{#1}}{\mathcal{H}_{\mathcal{Q}_1}}{\mathcal{H}_{\mathcal{Q}_1}\left(#1\right)}}
 \newcommand{\tmu}{\IntermSpaceForamt{\boldsymbol{\mu}}}
 \newcommand{\Fb}{\boldsymbol{F}} 
 \newcommand{\db}{\boldsymbol{d}}
 \newcommand{\gb}{\boldsymbol{g}}
 \newcommand{\lambdavec}{\boldsymbol{\lambda}}
 \newcommand{\evec}{\boldsymbol{e}}
 \newcommand{\GenBold}[2][]{
  \ifthenelse{\isempty{#1}}
    {\boldsymbol{#2}}
    {\boldsymbol{#2}\sMP[#1]}
  }
  \newcommand{\GenBoldUnderscore}[3][]{
  \ifthenelse{\isempty{#1}}
    {\boldsymbol{#2}#3}
    {\boldsymbol{#2}#3\sMP[#1]}
  }
 \newcommand{\Ib}{\GenBold{I}}
 \newcommand{\ub}[1][] {\GenBold[#1]{u}}
 \newcommand{\wb}[1][] {\GenBold[#1]{w}}
 \newcommand{\vb}[1][] {\GenBold[#1]{v}}
 \newcommand{\gvec}[1][]  {\GenBold[#1]{g}}
 \newcommand{\fvec}[1][]  {\GenBold[#1]{f}}
 \newcommand{\Kb}[1][]    {\GenBold[#1]{K}}
 \newcommand{\Sb}[1][]    {\GenBold[#1]{S}}
  \newcommand{\fvecB}[1][] {\GenBoldUnderscore[#1]{f}{\sB} }
  \newcommand{\fvecI}[1][] {\GenBoldUnderscore[#1]{f}{\sI} }
  \newcommand{\uIb}[1][] {\GenBoldUnderscore[#1]{u}{\sI} }
  \newcommand{\uBb}[1][] {\GenBoldUnderscore[#1]{u}{\sB} }
\begin{document}
\mainmatter  


\title{Dual-Primal Isogeometric Tearing and Interconnecting Solvers
    for 
    large-scale 
    systems of multipatch 
    continuous Galerkin
    IgA equations}

\titlerunning{IETI-DP for solving multipatch IgA equations}


\author{
Christoph Hofer$^1$
\and     Ulrich Langer$^1$ 
}
\authorrunning{C. Hofer,  U. Langer}

\institute{ $^1$ Johann Radon Institute for Computational and Applied Mathematics (RICAM),\\
Austrian Academy of Sciences\\ 
\mailse\\
\mailsa \\
 }

%

\maketitle

\begin{abstract}
{
   The dual-primal isogeometric tearing and interconnecting (IETI-DP) method is the adaption of the dual-primal finite element tearing and interconnecting (FETI-DP) method to isogeometric analysis of scalar elliptic boundary value problems like, e.g., diffusion problems with heterogeneous diffusion coefficients. The purpose of this paper is to extent the already existing results on condition number estimates to multi-patch domains, which consist of different geometrical mappings for each patch. Another purpose is to prove a 
polylogarithmic condition number bound for the preconditioned system with stiffness scaling in case of $C^0$ smoothness across patch interfaces. Numerical experiments validate  the presented theory.
}
\keywords{Elliptic boundary value problems, diffusion problems, heterogeneous diffusion coefficients, 
          Isogeometric Analysis, domain decomposition, FETI, IETI-DP algorithms.}
\end{abstract}

\section{Introduction}
Isogeometric Analysis (IgA) is a new methodology for solving partial differential equations (PDEs) numerically. 
IgA was introduced by  Hughes, Cottrell and  Bazilevs 
in 
\cite{HL:HughesCottrellBazilevs:2005a},
and has become a very active field of research,
see also \cite{veiga2006IGA_aopproximation}
for the first results on the numerical analysis of IgA,
and the monograph  \cite{Hughes2009_IGAbook} for a comprehensive presentation of the IgA.
%
The main idea is to use the basis functions, which are used for the representation 
of the geometry in computer aided design (CAD) models anyway, also for the approximation
of the solution of the  PDE or the system of PDEs describing the physical 
phenomenon which we are going to simulate.
The typical choice for such basis functions are B-Splines or Non-Uniform Rational Basis Spline (NURBS). 
One advantage of the IgA over the more traditional  finite element method (FEM) is certainly the fact 
that there is no need for decomposing the computational domain into finite elements.
Hence, one gets rid of this geometrical error source, at least, in the class of computational domains
that are produced by a CAD system.
Moreover, it is much easier to build up $C^l, l\geq 1$, conforming basis functions in IgA than in the 
finite element (FE) case. The major drawback is the fact that the basis functions are not nodal and have a larger support. 
However, it is still possible to associate basis functions to the interior, the boundary and the vertices of the domain, which is crucial for the dual-primal isogeometric tearing and 
interconnecting (IETI-DP) method, which was introduced in \cite{HL:KleissPechsteinJuettlerTomar:2012a}. The IETI-DP method is an extension of the dual-primal finite element tearing and interconnecting method (FETI-DP)  to IgA. 
A comprehensive 
theoretical analysis of the FETI-DP method can be found in, e.g., \cite{pechstein2012FETI}, \cite{toselli2005DDM} or \cite{Tarek_2008_DDM_FEM}. 
We here also mention the recent development of other IgA domain decomposition (DD) techniques. 
In particular, we want to mention isogeometric overlapping Schwarz methods, where we refer to \cite{beirao2012_OverlappingSchw}, \cite{beirao2013_Overlapping_LinElasicity}, \cite{bercovier2015overlapping}, and isogeometric mortaring discretizations, see \cite{hesch_2012MortaringDiscr}.
The analysis of Balancing Domain Decomposition by Constraints (BDDC) preconditioner, 
which has been done for IgA matrices in \cite{beirao2013IGA_bddc},
also applies to the IETI-DP method 
due to the same spectrum (with the exception of at most two eigenvalues), 
see \cite{ManDohrTez2005_AlgebTheoryFETIDP_BDDC}. 
Based on the FE work in \cite{Dohrmann2012_FEM_DeluxeScal}, 
a recent improvement for the IgA BDDC preconditioner with a more advanced scaling technique, the so called \emph{deluxe scaling}, can be found in \cite{veiga2014IGA_BDDC_deluxe}.  

The goal of this paper is to extend the condition number estimates for BDDC preconditioners,
 presented in \cite{beirao2013IGA_bddc}, to multipatch domains 
composed of non-overlapping patches which are images of the parameter domain 
by several different geometrical mappings.
Moreover, we present the derivation of an improved 
bound for the condition number 
for the so-called
\emph{stiffness scaling} in the simplified case of $C^0$ smoothness across patch interfaces. 
It turns out that, in our 
case, the stiffness scaling provides the same quasi optimal 
polylogarithmic bound in terms of $H/h$ as for the coefficient scaling.

In the present paper, we consider the following model elliptic boundary problem 
in a bounded Lipschitz domain $\Omega\subset \real[d], d\in\{2,3\}$:
find $u: \overline{\Omega} \rightarrow \mathbb{R}$ such that\\
%
%
\begin{equation}
  \label{equ:ModelStrong}
  - \mdiv(\alpha \grad u) = f \; \text{in } \Omega,\;
  u = 0 \; \text{on } \Gamma_D,
  \;\text{and}\;
  \alpha \DN{u} = g_N \; \text{on } \Gamma_N,
\end{equation}
with given, sufficient smooth data $f, g_N \text{ and } \alpha$, where  
the coefficient $\alpha$ is uniformly bounded from below and above
by some positive constants $\alpha_{min}$ and $\alpha_{max}$, respectively.
The boundary $\Gamma = \partial \Omega$ of the computational domain $\Omega$
consists of a Dirichlet part $\Gamma_D$ and a Neumann part $\Gamma_N$.
Furthermore, we assume that the Dirichlet boundary $\Gamma_D$ is always 
a union of complete domain sides which are uniquely defined in IgA.. 
Without loss of generality, we assume 
homogeneous
Dirichlet conditions. 
This can always be  obtained by homogenization.

By means of integration by parts, 
we arrive at
the weak formulation of \equref{equ:ModelStrong} 
which reads as follows:
find $u \in \VD = \{ u\in H^1: \gamma_0 u = 0 \text{ on } \Gamma_D \}$
such that
\begin{align}
  \label{equ:ModelVar}
    \bil{u}{v} = \func{F}{v} \quad \forall v \in \VD,
\end{align}
where $\gamma_0$ denotes the trace operator. The bilinear form
$a(\cdot,\cdot): \VD \times \VD \rightarrow \mathbb{R}$
and the linear form $\func{F}{\cdot}: \VD \rightarrow \mathbb{R}$
are given by the expressions
\begin{equation*}
\bil{u}{v} = \int_\Omega \alpha \nabla u \nabla v \,dx
\quad \mbox{and} \quad
\func{F}{v} = \int_\Omega f v \,dx + \int_{\Gamma_N} g_N v \,ds,
\end{equation*}
respectively.
%
%
  
The rest of the paper is organized as follows. In \secref{sec:IGA}, we recall the basic definitions and properties of B-Splines as well as the main principles of IgA. The IETI-DP and the corresponding BDDC methods are explained in \secref{sec:IETI}. In particular, 
the new results on the condition number estimate, as mentioned above,
are presented in Subsection~\ref{sec:AnalysisBDDC}.
Section~\ref{sec:implementation} is devoted to the implementation of the IETI-DP method.
The numerical examples confirming the theory are presented in Section~\ref{sec:numExamplesIETI}.
Finally, in Section~\ref{sec:conclusions}, we draw some conclusions and discuss
further issues concerning generalizations to multipatch discontinuous Galerkin IgA schemes 
as constructed and analysed in 
\cite{HL:LangerMoore:2014a} and \cite{HL:LangerToulopoulos:2014a}.


\section{Some preliminaries on Isogeometric analysis }
\label{sec:IGA}
B-Splines and NURBS
play an important role in computer aided design and computer graphics. Here we will use these splines for building our 
trial and test spaces 
for the Galerkin approximations to \equref{equ:ModelVar}, as proposed in \cite{Hughes2009_IGAbook}. This section provides the definition of B-Splines in one dimension and in higher dimensions via a tensor product structure. We will give an overview of isogeometric discretization and summarize the approximation properties of these B-Splines and NURBS.


\subsection{Univariate B-Splines}
\label{sec:IGA:UnivariateB-Splines}
\begin{definition}
Let $\kn[1] \leq \ldots \leq \kn[\nkn]$ be a finite, real-valued, monotonically increasing sequence of real numbers. The set
 $\{\kn[1],\ldots,\kn[\nkn]\}$ is called \emph{knot vector}. An entry $\kn[i], i\in\{1,\ldots,\nkn\}$ is called \emph{knot} and is called an 
 \emph{interior knot} if $(\kn[1] < \kn[i]) \wedge (\kn[i] < \kn[\nkn])$. If $r$ knots have the same value, we say that the knot has multiplicity $\mul\def\@thmcountersep{.}$, i.e.,
 $\mul = |\{j\in\{1,\ldots,\nkn\}: \kn[j] = \kn[i]\}|$ 
 is the cardinal number of the set $\{j\in\{1,\ldots,\nkn\}: \kn[j] = \kn[i]\}$. The interval between two knots is called \emph{knot span}. A knot span is called \emph{empty} if
 $\kn[i] = \kn[i+1]$ and is called interior if $\kn[1]<\kn [i+1] \wedge \kn[i]<\kn[\nkn]$. The knots provides a partitioning of the parameter domain into elements. 
 If the knots are equally spaced in the domain, we call it \emph{uniform}, otherwise \emph{non-uniform}. 
\end{definition}
Based on a knot vector, we can define the B-Spline functions recursively.
\begin{definition}
 Let $\deg\in\nat$ and $\knV$ be a knot vector with multiplicity of any interior knot of at most $\deg$. Then by the \emph{Cox-de Boor} recursion formula we 
 can define the $\nBasis = \nkn - \deg -1$ \emph{univariate B-Spline} basis functions on $[\kn[1],\kn[\nkn]]$ as follows:
 \begin{align}
 \label{equ:BSpline0}
  \BSplG{i}{0}(\kn) &=\begin{cases}
                 1 & \mbox{if }\kn[i]\leq \kn \leq \kn [i+1]\\
                 0 & \mbox{otherwise}
                \end{cases},\\
  \label{equ:BSpline}
  \BSpl(\kn)&= \frac{\kn-\kn[i]}{\kn[i+\deg]-\kn[i]}\BSplG{i}{\deg-1}(\kn) + \frac{\kn[i+\deg+1]-\kn}{\kn[i+\deg+1]-\kn [i+1]}\BSplG{i+1}{\deg-1}(\kn),
 \end{align}
 where $i=1,\ldots,\nBasis$.
 If in \equref{equ:BSpline} appears a $0/0$ we define it as $0$. The number $\deg$ is then called \emph{degree} of the B-Spline.
\end{definition}
\begin{definition}
 Let $\knV$ be a knot vector. We say that the knot vector is \emph{open} if the multiplicity of the first knot and the last knot are $\deg+1$ whereas the multiplicity of the 
other knots is  $\deg$ at most. 
\end{definition}

B-Splines defined on open knot vectors are interpolatory at the beginning and the end of the parameter interval, while all other basis function are zero there. Hence it is possible 
to identify basis functions corresponding to the interior and the boundary. Additionally, the number of interior knot spans $\nIntKnotspan$ is given by
$\nIntKnotspan = \nkn - 1 - 2\deg.$
Without loss of generality,  we can restrict ourselves to a certain class of knot vectors
by means of a suitable scaling.
\begin{assumption}
We only consider knot vectors being a partition of $[0,1]$, i.e.
$\kn[1] = 0$ and  $\kn[\nkn]= 1$.
\end{assumption}
Since later on, we will only be interested in $C^0$ continuity across the interfaces, we restrict our analysis to open knot vectors. 
\begin{assumption}
 We consider all  knot vectors used as open knot vectors.
\end{assumption}
%

At the end of this section, let us summarize some important properties of B-Splines:
\begin{enumerate}
\item The B-Splines basis functions $\BSpl$ form a partition of unity, i.e.
      \begin{align*}
      \sum_{i=1}^{\nBasis} \BSpl(\kn) \equiv 1 
      \end{align*}
      for all $p=0,1,\ldots$ .
\item The B-Spline basis functions are non negative, i.e. $\forall \kn\in[\kn[1],\kn[\nkn]]\, \forall i\in\{1,\ldots,\nBasis\}: \BSpl(\kn)\geq 0$.
\item The support $\BSpl$ is local and it holds
    \begin{align*}
    \supp \BSpl \subseteq (\kn[i],\kn[i+\deg+1]), \quad \forall i\in\{1,\ldots,\nBasis\}.
    \end{align*}
\item Only $\deg+1$ basis functions are non zero on a knotspan $(\kn[i],\kn[i+1])$ and its 		indices are $i-\deg,\ldots,i$, i.e.
  \begin{align*}
   \BSplG{j}{\deg}|_{(\kn[i],\kn[i+1])} \neq 0 \Leftrightarrow j\in \{i-\deg,\ldots,i\},
  \end{align*}
  where $i\in\{1,\ldots,\nkn-1\}$.
\item On each knotspan the B-Spline basis functions are piecewise polynomials of degree 	$\deg$ and, without multiple knots in the interior, $C^{\deg-1}$ continuous.
  At a knot with multiplicity $r$, it has $C^{\deg-r}$ continuity. 
  Hence, the continuity is reduced in the presence of multiple knots.
  \item If a knot $\kn[l]$ has the multiplicity $r = \deg$, then there is one basis function $\BSpl$, such $\BSpl(\kn[l])=1$ and all other basis functions have zero value there,
      i.e.,  the basis is interpolatory at $\kn[l]$.
\end{enumerate}


\subsection{Tensor product B-Splines}

In order to define B-Splines in higher dimensions, we make use of the tensor product.
\begin{definition}
 Let $(\deg^1,\ldots,\deg^\pDim)$ be a vector in $\nat^\pDim$, 
and let, 
for all $\iVar=1,\ldots,\pDim$, 
$\knV^\iVar$ be a knot vector. 
Furthermore, we denote 
the $i^\iVar$ univariate B-Spline defined on the knot vector $\knV^\iVar$
by $\BSplG{i^\iVar}{\deg}^\iVar(\kn^\iVar)$. 
Then the $\pDim$-dimensional tensor product B-Spline (TB-Spline) is defined by
 \begin{align}
 \label{def:TBSpline}
  \BSplG{(i^1,\ldots,i^\pDim)}{(\deg^1,\ldots,\deg^\pDim)}(\kn) = \prod_{\iVar=1}^{\pDim}\BSplG{i^\iVar}{\deg^\iVar}^\iVar(\kn^\iVar).
 \end{align}
\end{definition}
In order to avoid cumbersome notations, we will again denote the tensor product B-Spline by $\BSpl$ and interpret $i$ and $\deg$ as multi-indices. Additionally we define the set 
of multi-indices $\iSet$ by
\begin{align*}
 \iSet := \{(i^1,\ldots,i^\pDim): i^\iVar \in \{1,\ldots,\nBasis_\iVar\} \, \forall \iVar\in\{1,\ldots,\pDim\}\},
\end{align*}
where $\nBasis_\iVar$ are the number of B-Spline basis function for dimension $\iVar$.
Due to the tensor product structure, the TB-Splines provide the same properties as the univariate B-Splines.

In case of TB-Spline, we will call a non-empty knotspan $\pCell[i] = (\kn[i],\kn[i+1]), i\in\iSetMesh$ also \emph{cell}, where $\iSetMesh = \{ i\in\iSet| i^\iVar\neq \nBasis_\iVar\}$ and $(\kn[i],\kn[i+1])$ is defined as
\begin{align*}
 (\kn[i],\kn[i+1]) := (\kn[i^1]^1,\kn[i^1+1]^1)\times\ldots\times(\kn[i^\pDim]^\pDim,\kn[i^\pDim+1]^\pDim).
\end{align*}
The mesh created by these cells is denoted by $\pMesh$, i.e.
\begin{align*}
 \pMesh := \{ (\kn[i],\kn[i+1]) | i \in \iSetMesh \}.
\end{align*}


\subsection{B-Spline geometries and geometrical mapping}

The B-Splines are used to represent a $\pDim$-dimensional geometry in $\real[\gDim]$, 
where $\pDim\leq \gDim$. 
In the following, we will restrict ourselves to the case $\pDim = \gDim\in \{2,3\}$. 
\begin{definition}
 Let $\BSplFam$ be a family of tensor B-Spline basis functions. Given \emph{control points} $\Cp[i]\in\real[\gDim]$, $i\in\iSet$,
the \emph{B-Spline surface/volume} is defined by
\begin{align*}
  \geoMap :\;& \pDom \rightarrow \real[\gDim]\\
	    &\geoMap(\kn) := \sum_{i\in \iSet}\Cp[i]\BSpl(\kn).
\end{align*}
 We call $\geoMap$ the \emph{geometrical mapping}, the domain $\pDom$ of $\geoMap$ \emph{parameter domain} and the image $\gDom := \geoMap(\pDom)\subset \real[\gDim]$ \emph{physical domain}.
 The geometrical mapping is called \emph{regular} if $\det\nabla \geoMap(\kn)\neq0,\,\forall \kn\in[0,1]^\pDim$.
\end{definition}
The knot vector $\knV$ provides a partition of the parameter domain 
into cells, and, by means of the geometrical mapping, 
we receive a partition of the physical space in cells $\gCell[i]$ as well, where
\begin{align*}
  \gCell[i]= \geoMap(\pCell[i]), \pCell[i] \in \pMesh.
\end{align*}
If we collect all these cells, then we get a mesh $\gMesh$ for the physical domain 
\begin{align*}
   \gMesh := \{\gCell = \geoMap(\pCell)| \pCell \in \pMesh\}.
\end{align*}
%
%
\begin{definition}
A family of meshes $\{\gMesh\}_{h\in H}$ is called \emph{quasi uniform}, if there exists a constant $\theta\geq 1$ for all $\gMesh \in \{\gMesh\}_{h\in H}$, such that $\theta^{-1} \leq \diam(\gCell)/\diam(\gCell')\leq \theta$ for all $ \gCell, \gCell'\in \gMesh $.
\end{definition}

\begin{assumption}
All considered meshes are quasi uniform.
\end{assumption}


\subsubsection{Multi-patch geometries}
%
In many practical applications, it is not possible to describe the physical computational 
domain $\gDom$ just with one geometrical mapping $\geoMap$.
Therefore, we represent the physical domain $\gDom$ by $\nMP$ non overlapping domains $\gDom\sMP$, called \emph{patches}. Each $\gDom\sMP$ is the image of an associated geometrical mapping $\geoMap^{(k)}$, defined on the parameter domain $\pDom$, i.e.,
$\gDom\sMP = \geoMap\sMP\left(\pDom\right) \; \for \iMP = 1,\ldots,\nMP$,
and
$\overline{\gDom} = \bigcup_{\iMP=1}^{\nMP} \overline{\gDom}\sMP.$
%
Clearly, each patch has a mesh $\gMesh\sMP$ in the physical domain and a mesh $\pMesh\sMP$ in the parameter domain, consisting of cells $\gCell\sMP$ and $\pCell\sMP$.

We denote 
the interface between the two patches $\gDom\sMP[k]$ and $\gDom\sMP[l]$ by $\gInt$,
and 
the collection of all interfaces by $\gIntMP$, i.e.,
\begin{equation*}
\gInt = \overline{\gDom}\sMP[k]\cap\overline{\gDom}\sMP[l]
\quad \mbox{and} \quad
\gIntMP := \bigcup_{l>k}\gInt.
\end{equation*}
%
%
Furthermore, the boundary of the domain is denoted by $\partial\Omega$. Note that the interface $\gIntMP$ is sometimes called \emph{skeleton}.

\subsection{Isogeometric discretization}
\label{sec:IGA-disc}

The key point in isogeometric analysis is the use of the same functions for representing the geometry as well as basis functions for the solution space. This motives the definition of the basis functions in the physical domain via the push-forward of the basis functions in the parameter domain, i.e.,
\begin{align*}
  \gBSpl:= \BSpl \circ \geoMap^{-1}.
\end{align*}
Thus, we 
define 
our 
discrete function space 
$\gVh$ by
 \begin{align}
   \gVh = \spn\gBSplFam \subset H^1(\gDom).
 \end{align}
The function $u_h$ from the IgA space $\gVh$ can therefore be represented 
in the form
 \begin{align*}
  u_h(\x) = \sum_{i\in\iSet} u_i \gBSpl(\x).
 \end{align*}
 Hence, each function $u_h(\x)$ is associated with the vector $\ub = (u_i)_{i\in\iSet}$. 
This map is known as \emph{Ritz isomorphism}. One usually writes this relation as 
$u_h \leftrightarrow \ub$,
%
%
 and we will use it in the following without further comments.

 Sometimes we also need the space $\pVh$ of spline functions in the parameter domain,
i.e.,
 \begin{align}
  \pVh = \spn\BSplFam \subset H^1(\pDom).
 \end{align}

Similar to the previous subsection, we define the discrete function spaces for each patch of a multipatch by
 \begin{align}
 \label{def:gVhMP}
   \gVh\sMP = \spn\gBSplFamG[k]{i}{\deg} \subset H^1(\gDom\sMP)
 \end{align}
 and functions therein by
 \begin{align*}
  u_h(\x) = \sum_{i\in\iSet\sMP} c_i\sMP \gBSpl\sMP(\x),
 \end{align*}
 where
\begin{align*}
  \gBSpl\sMP:= \BSpl\sMP \circ {\geoMap\sMP}^{-1}
\end{align*}
The discrete function spaces for the whole multipatch domain is then given by
 \begin{align*}
   \gVh = \{v| \;v_{|\gDom\sMP}\in\gVh\sMP\}\cap H^1(\gDom).
 \end{align*}
The space of spline functions in the parameter domain is analogously defined as $\pVh\sMP$.

Based on the work in \cite{beirao2013IGA_bddc}, we can find an important splitting of the space $\gVh$. Since we are using open knot vectors, we can identify basis function on the interface $\gIntMP$ and in the interior of each patch. The set of all indices of basis function having a support on the interface is denoted by $\setB$ and the corresponding space is defined via
\begin{align}
\label{def:VB}
 \gVB := \spn\{ \gBSpl|\; i\in \setB\}\,\subset H^1(\gDom).
\end{align}
For a function $u\in\gVB$, we define its restriction to a single patch $\gDom\sMP$ by $u\sMP\in\gVB\sMP$, where $\gVB\sMP:=\{u_{|\gDom\sMP}|u\in\gVB\}$. Similarly, we define the space of all functions living in the interior of $\gDom\sMP$ by
\begin{align}
\label{equ:VIMP}
 \gVI\sMP := \gVh\sMP \cap H^1_0(\gDom\sMP).
\end{align}
It is easy to see that the space $\gVI\sMP$ has also the following representation
\begin{align}
\label{equ:VIMP2}
 \gVI\sMP = \spn\{ \gBSpl\sMP|\; \supp\{\gBSpl\sMP\}\subset\gDom\sMP\}.
\end{align}
This leads to the decomposition
\begin{align*}
 \gVh = \prod_{\iMP =1}^{\nMP}\gVI\sMP \oplus \DHE[\gVB],
\end{align*}
where $\DHE$ is the \emph{discrete NURBS harmonic extension} defined by
\begin{align}
\label{def:DHE}
\begin{split}
 \DHE&: \gVB \to \gVh:\\
&\begin{cases}
 \text{Find }\DHE{v\sB}\in \gVh: & \\
  \quad\bil{\DHE{v\sB}}{v\sMP}=0 \quad &\forall v\sMP\in \gVI\sMP, \, 1\leq\iMP\leq\nMP,\\
   \quad \DHE{v\sB}_{|\partial\gDom\sMP} = {v\sB}_{|\partial\gDom\sMP} & 1 \leq \iMP\leq\nMP.
\end{cases}
\end{split}
\end{align}
%
See \cite{beirao2013IGA_bddc} and \cite{Schumaker2007_Spline_functions} for a more sophisticated discussion.

\subsubsection{Continuous Galerkin IgA Schemes}
\label{sec:DiscreteProblem}
We now look for the Galerkin approximate $u_h$ from the finite dimensional subspace 
$\VDh$ of $\VD$, where $\VDh$ is the set of all functions from $\gVh$ 
which vanish on the Dirichlet boundary $\Gamma_D$.
The Galerkin IgA scheme reads as follows:
find $u_h \in \VDh$ such that
  \begin{align}
  \label{equ:ModelDisc}
    \bil{u_h}{v_h} = \func{F}{v_h} \quad \forall v_h \in \VDh.
  \end{align}
  
A basis of the space $\VDh$ are the B-Spline functions $\{\gBSpl\}_{i\in\iSet_0}$. However, we have to exclude those basis functions which have a contribution to the value on the Dirichlet boundary, {obtaining the new index set $\iSet_0$}. Since we are using open knot vectors, we can identify those basis functions. By choosing such a basis and introducing a global ordering $\iSet$ of the basis function across all patches, 
we can rewrite the Galerkin IgA scheme \equref{equ:ModelDisc} 
as a linear system of IgA equations of the form
\begin {align}
\label{equ:Ku=f}
  \Kb \ub = \fvec,
\end{align}
where 
$\Kb = (\Kb_{i,j})_{i,j\in {\mathcal{I}}_0}$
and
$\fvec= (\fvec_i)_{i\in {\mathcal{I}}_0}$
denote the stiffness matrix and the load vector, respectively,
with 
$ \Kb_{i,j} = \bil{\gBSplG{j}{\deg}}{\gBSplG{i}{\deg}}$
and 
$\fvec_i = \func{F}{\gBSplG{i}{\deg}}$,
%
and $\ub$ is the vector representation of $u_h$ 
given by the IgA isomorphism. {In order to keep the notation simple, we will reuse   the symbol $\iSet$ for the set $\iSet_0$} in the following.
 
\subsubsection{Schur complement system}
\label{sec:Schurcomp}
Introducing the bilinear form 
\begin{align*}
 s:& \gVB \times \gVB \to \real\\
  &s(w\sB,v\sB) = \bil{\DHE{w\sB}}{\DHE{v\sB}},
\end{align*}
one can show that the interface component $u\sB$ 
of the 
solution to the IgA scheme
\equref{equ:ModelDisc} satisfies 
the variational identity
\begin{align}
\label{equ:VarSchur}
 s(u\sB,v\sB) = \func{g}{v\sB}\quad \forall v\sB\in \gVB,
\end{align}
where $g\in \gVB^*$ is a suitable functional.
By choosing the B-Spline basis for $\gVB$, the 
variational identity
\equref{equ:VarSchur} is equivalent to the linear system
\begin{align*}
 \Sb \uBb = \gb.
\end{align*}
The matrix $\Sb$ is the Schur complement matrix of $\Kb$ with respect to the interface dofs. 
Suppose, we reorder the entries of the stiffness matrix $\Kb$ and the load vector $\fvec$, such that the dofs corresponding to the interface come first, i.e.,
\begin{align*}
 \Kb = \MatTwo{\KBB}{\KBI}{\KIB}{\KII} \text{ and }\quad \fvec = \VecTwo{\fvecB}{\fvecI},
\end{align*}
then it can be shown that $\Sb$ and $\gb$ are given by
\begin{align*}
  \Sb &= \KBB - \KBI(\KII)^{-1}\KIB, \\
   \gb&= \fvecB -  \KBI (\KII)^{-1}\fvecI.
\end{align*}
Once $\uBb$ is calculated, we obtain $\uIb$ as the solution of the system
\begin{align*}
\KII \uIb = \fvecI - \KBI\uBb.
\end{align*}
Instead of the Schur complement matrix $\Sb$ we will mostly use its operator representation:
\begin{align*}
\Sop: \gVB \to \gVB^*,\\
 \func{\Sop v}{w} = \left(\Sb\vb,\wb\right). 
\end{align*}

%
\subsubsection{Approximation Properties}
This section recaps some important properties of the approximation power of B-Splines from \cite{veiga2006IGA_aopproximation}. First of all, we state a result about the relation of the $H^m$ norms between the function in the physical and the parameter domain, summarized in \propref{prop:HmGeoMapRelation} and \corref{cor:L2H1GeoMapRelation},
which are proved in \cite{veiga2006IGA_aopproximation}, {see  Lemma 3.5.}
\begin{proposition}
\label{prop:HmGeoMapRelation}
 Let $m$ be a non-negative integer, $\pCell\in\pMesh$ and $\gCell = \geoMap(\pCell)$. 
Then the equivalence inequalities
\begin{align}
  \HSNorm{m}{v}{\pCell} &\leq C_{shape} \LInfNorm{\det\nabla\geoMap^{-1}}{\gCell}^{1/2} \sum_{j=0}^m \LInfNorm{\nabla\geoMap}{\pCell}^j\HSNorm{j}{\g{v}}{\gCell},\\
  \HSNorm{m}{\g{v}}{\gCell} &\leq C_{shape} \LInfNorm{\det\nabla\geoMap}{\pCell}^{1/2}\LInfNorm{\nabla\geoMap}{\pCell}^{-m}\sum_{j=0}^m\HSNorm{j}{v}{\pCell}.
\end{align}
hold for all $\g{v}\in H^m(\gCell)$ and their counterparts $ v \in H^m({\hat Q})$,
where $C_{shape}$ are positive generic constants that only depend on the shape of the geometry $\gDom$ and its parametrization.
\end{proposition}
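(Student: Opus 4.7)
The statement is a classical change-of-variables estimate between Sobolev seminorms on $\hat Q$ and $Q = G(\hat Q)$. Since the two inequalities are completely symmetric (one obtained from the other by interchanging the roles of $G$ and $G^{-1}$), the plan is to prove the first one carefully and then remark that the second follows by exactly the same argument applied to $\check v = v\circ G^{-1}$.

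The approach is standard: combine a pointwise chain-rule bound with a change-of-variables in the integral. First I would write $v = \check v \circ G$ on $\hat Q$ and apply the Fa\`a di Bruno formula to express, for any multi-index $\alpha$ with $|\alpha|=m$,
\begin{equation*}
 D^\alpha v(\hat x) \;=\; \sum_{j=1}^{m} \sum_{\pi\in\mathcal{P}_{\alpha,j}} c_\pi \,\bigl(D^{\beta_\pi}\check v\bigr)\!\bigl(G(\hat x)\bigr)\cdot \prod_{\ell=1}^{j} D^{\gamma_{\pi,\ell}}G(\hat x),
\end{equation*}
where $\pi$ ranges over suitable partitions of $\alpha$, $|\beta_\pi|=j$, and the total order of the $\gamma_{\pi,\ell}$ equals $m$. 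The key structural observation is that in every term $j$ factors of derivatives of $G$ appear, and exactly one of them is of first order for each unit of "height" in the partition, so that after estimating every factor $D^{\gamma_{\pi,\ell}}G$ of order $\geq 2$ by its $L^\infty$-bound (absorbed into $C_{shape}$, which by hypothesis is allowed to depend on the parametrization) and every first-order factor by $\|\nabla G\|_{L^\infty(\hat Q)}$, one obtains the pointwise bound
\begin{equation*}
 |D^\alpha v(\hat x)| \;\leq\; C_{shape}\sum_{j=0}^{m}\|\nabla G\|_{L^\infty(\hat Q)}^{j}\,\bigl|\nabla^{j}\check v\bigr|\!\bigl(G(\hat x)\bigr).
\end{equation*}

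Next I would square this inequality, sum over $|\alpha|=m$, and integrate over $\hat Q$. Applying the change of variables $x = G(\hat x)$ (so that $d\hat x = |\det\nabla G^{-1}(x)|\,dx$) patchwise and pulling the supremum of $|\det\nabla G^{-1}|$ out of the integral yields
\begin{equation*}
 |v|_{H^m(\hat Q)}^2 \;\leq\; C_{shape}^2 \,\|\det\nabla G^{-1}\|_{L^\infty(Q)}\sum_{j=0}^{m}\|\nabla G\|_{L^\infty(\hat Q)}^{2j}\,|\check v|_{H^j(Q)}^2.
\end{equation*}
Taking square roots and using the elementary inequality $\sqrt{\sum a_j^2}\leq \sum|a_j|$ (absorbing the constant into $C_{shape}$) produces the first asserted estimate. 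The second inequality is obtained by the identical argument with $G$ and $G^{-1}$ exchanged, noting that $\|\nabla G^{-1}\|_{L^\infty(Q)}$ can be bounded in terms of $\|\nabla G\|_{L^\infty(\hat Q)}^{-1}$ and $\|\det\nabla G^{-1}\|_{L^\infty(Q)}$ via the cofactor formula (again with a constant depending only on the parametrization).

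The only genuine obstacle is bookkeeping the Fa\`a di Bruno expansion carefully enough to justify that, after the higher-order derivatives of $G$ are absorbed into $C_{shape}$, exactly $\|\nabla G\|_{L^\infty}^{j}$ survives in front of $|\check v|_{H^j}$ — i.e., that the partition structure forces the number of first-order factors of $G$ in each term contributing to the $j$-th derivative of $\check v$ to be at most $j$. Once this combinatorial check is done, the integration and change-of-variables step is routine thanks to the regularity of $G$ and the assumption $\det\nabla G\neq 0$.
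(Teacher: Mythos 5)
The paper does not prove this proposition itself; it defers to Lemma~3.5 of \cite{veiga2006IGA_aopproximation}, whose proof is precisely the chain-rule (Fa\`a di Bruno) plus change-of-variables argument you outline, so your approach is essentially the same and is correct. The two bookkeeping points you would still need to write out --- that in each Fa\`a di Bruno term the factors $D^{\gamma}\geoMap$ of order $\geq 2$ and the surplus powers of $\LInfNorm{\nabla\geoMap}{\pCell}$ can be absorbed into $C_{shape}$ because $\nabla\geoMap$ is bounded above and below on the fixed parametrization, and that passing from $\LInfNorm{\nabla\geoMap^{-1}}{\gCell}^{j}$ to the stated $\LInfNorm{\nabla\geoMap}{\pCell}^{-m}$ in the second inequality uses the same shape-regularity of $\geoMap$ --- are exactly the kind of parametrization-dependent constants the statement permits, so no genuine gap remains.
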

We note that the 0-order terms in the upper bounds of \propref{prop:HmGeoMapRelation} are not needed for $m>0$. They are incorporated in order to give a unified 
presentation
for $m\geq0$. 
Hence, 
as a special case of \propref{prop:HmGeoMapRelation}, we obtain the following estimates for the $L^2$ norm and $H^1$ seminorm
\begin{corollary}
\label{cor:L2H1GeoMapRelation}
 Let  $\pCell\in\pMesh$ and $\gCell = \geoMap(\pCell)$.\\
 For $\g{v}\in L^2(\gCell)$, we have
 \begin{align}
  \LTwoNorm{v}{\pCell} &\leq C_{shape} \LInfNorm{\det\nabla\geoMap^{-1}}{\gCell}^{1/2} \LTwoNorm{\g{v}}{\gCell},\\
  \LTwoNorm{\g{v}}{\gCell} &\leq C_{shape} \LInfNorm{\det\nabla\geoMap}{\pCell}^{1/2}\LTwoNorm{v}{\pCell},
 \end{align}
and, for $\g{v}\in H^1(\gCell)$, we can write
  \begin{align}
   \HOneSNorm{v}{\pCell} &\leq C_{shape}C \HOneSNorm{\g{v}}{\gCell},\\
   \HOneSNorm{\g{v}}{\gCell} & \leq C_{shape}C \HOneSNorm{v}{\pCell},
  \end{align}
  where the generic constants $C$  only depends  on the geometrical mapping.
\end{corollary}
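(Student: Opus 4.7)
The plan is to derive the corollary as two specializations of Proposition \ref{prop:HmGeoMapRelation}, handling the $L^2$ case with $m=0$ and the $H^1$ seminorm case with $m=1$, where the latter requires a small additional argument to eliminate the $L^2$ term from the right-hand side.

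For the $L^2$ inequalities, I would simply invoke Proposition \ref{prop:HmGeoMapRelation} with $m=0$. The sum $\sum_{j=0}^{0}\LInfNorm{\nabla\geoMap}{\pCell}^j\HSNorm{j}{\g v}{\gCell}$ collapses to the single term $\HSNorm{0}{\g v}{\gCell}=\LTwoNorm{\g v}{\gCell}$, and symmetrically in the other direction. No further work is needed; the two $L^2$ bounds follow immediately with the stated constants.

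For the $H^1$ seminorm inequalities, I would argue that the $L^2$ piece present in the $m=1$ case of Proposition \ref{prop:HmGeoMapRelation} can in fact be dropped. The cleanest way is to give a short direct chain-rule proof: writing $v = \g v \circ \geoMap$, one has $\nabla_{\kn} v(\kn) = (\nabla\geoMap(\kn))^{\top}\,\nabla_{x}\g v(\geoMap(\kn))$, so pointwise $|\nabla_{\kn}v(\kn)|\le \LInfNorm{\nabla\geoMap}{\pCell}\,|\nabla_{x}\g v(\geoMap(\kn))|$. Squaring, integrating over $\pCell$, and changing variables via $\geoMap$ with the Jacobian factor $|\det\nabla\geoMap^{-1}|$ then gives
\begin{equation*}
\HOneSNorm{v}{\pCell}^{2}
\;\le\; \LInfNorm{\nabla\geoMap}{\pCell}^{2}\,\LInfNorm{\det\nabla\geoMap^{-1}}{\gCell}\,\HOneSNorm{\g v}{\gCell}^{2}.
\end{equation*}
Taking square roots and absorbing the two $L^\infty$ factors into a single constant $C$ depending only on the geometrical mapping yields the first seminorm bound. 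The reverse inequality is obtained analogously by applying the chain rule to $\g v = v\circ \geoMap^{-1}$ and using the regularity assumption on $\geoMap$.

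The only mild subtlety, which is also the point I would expect to spell out most carefully, is the justification of dropping the $L^2$ term from the $m=1$ case of Proposition \ref{prop:HmGeoMapRelation}: this is legitimate precisely because $|\cdot|_{H^{1}}$ is a \emph{seminorm} and the chain rule does not introduce zeroth-order contributions. Everything else reduces to packaging powers of $\LInfNorm{\nabla\geoMap^{\pm 1}}{\cdot}$ and $\LInfNorm{\det\nabla\geoMap^{\pm 1}}{\cdot}^{1/2}$ into the generic constant $C$, which is harmless since these quantities depend only on the fixed geometrical mapping.
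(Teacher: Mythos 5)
Your proposal is correct and follows essentially the same route as the paper, which presents the corollary as the $m=0$ and $m=1$ special cases of Proposition~\ref{prop:HmGeoMapRelation} after remarking (without proof) that the zeroth-order terms are not needed for $m>0$. Your explicit chain-rule computation merely supplies the justification for that remark, which the paper leaves to the cited reference.
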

The second result provides the quantitative approximation power of NURBS.
We mention that B-Splines are special case of NURBS. 
It basically states that a NURBS space has the same approximation power as a FE space of same degree. We do not give any further technical details. We refer the reader to   \cite{veiga2006IGA_aopproximation} 
for a more comprehensive discussion of the approximation properties.
In particular, the following theorem is proved in \cite{HL:TagliabueDedeQuarteroni:2014a},
see also Theorem~3.2 in \cite{veiga2006IGA_aopproximation}.


\begin{theorem}
\label{thm:approxPower}
Let $k$ and $l$  be integers with $0 \leq k\leq l \leq \deg+1$, $l\geq \mul$,
and  $\gCell = \geoMap(\pCell)$ with $\pCell \in \pMesh$. 
Then there exists a projective operator $\Pi_{\gVh}:L^2(\Omega) \to \gVh$ 
from $L^2(\Omega)$ to the NURBS space $\gVh$ such that
the approximation error estimates
\begin{align*}
 \sum_{\gCell\in\gMesh}\left|v-\Pi_{\gVh} v\right|^2_{H^k(\gCell)}  \leq C_{shape} \sum_{\gCell\in\gMesh} h_{\gCell}^{2(l-k)}\sum_{i=0}^l\LInfNorm{\nabla \geoMap}{\overline{\pCell}}^{2(i-l)}
&\HSNorm{i}{v}{\overline{\gCell}}^2 
\end{align*}
%
hold for all $v\in H^l({\gDom})$,
where $\overline{\gCell}$ denotes   the so-called support extension of $\gCell$,
and
the constant $C_{shape}$ only depends on the geometry and its parametrization.
%
\end{theorem}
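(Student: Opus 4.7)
The plan is to construct the projector $\Pi_{\gVh}$ by pulling back to the parameter domain, applying a B-spline/NURBS quasi-interpolant there, and pushing forward via the geometric mapping $\geoMap$. That is, I would define $\Pi_{\gVh} v := \bigl(\Pi_{\pVh}(v \circ \geoMap)\bigr)\circ \geoMap^{-1}$, where $\Pi_{\pVh}: L^2(\pDom) \to \pVh$ is a stable projector onto the tensor-product spline space in the parameter domain (a Schumaker- or de~Boor-type quasi-interpolant defined by dual functionals with local support, as discussed in Schumaker's monograph). Since $\geoMap$ is a $C^\infty$-diffeomorphism on each patch, this yields a well-defined map into $\gVh$, and it is projective because such a quasi-interpolant reproduces elements of $\pVh$.

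Next I would establish the estimate on the parameter domain. For the quasi-interpolant $\Pi_{\pVh}$, one has the local Bramble--Hilbert-type bound
\begin{equation*}
 |w - \Pi_{\pVh} w|^2_{H^k(\pCell)} \;\leq\; C\, h_{\pCell}^{2(l-k)}\,|w|^2_{H^l(\overline{\pCell})}
\end{equation*}
for all $\pCell \in \pMesh$ and all $0\leq k \leq l\leq \deg+1$ with $l\geq\mul$, where $\overline{\pCell}$ is the support extension. The lower bound $l\geq\mul$ is forced by the fact that a B-spline of degree $\deg$ with a knot of multiplicity $\mul$ has only $C^{\deg-\mul}$ smoothness, so for $H^l$-approximation to be consistent with the spline smoothness one needs $l\leq \deg-\mul+1$ in the classical sense; equivalently the target regularity must be compatible, $l\geq \mul$ appears as the admissibility condition. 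The proof of this local estimate follows the standard argument: stability of $\Pi_{\pVh}$ on each extended cell, polynomial reproduction of degree $\leq l-1$, and the classical Deny--Lions / Bramble--Hilbert lemma.

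Then I would transfer the estimate to the physical domain cellwise by applying Proposition~\ref{prop:HmGeoMapRelation}. For a fixed $\gCell=\geoMap(\pCell)$, setting $w = v\circ\geoMap$, the proposition gives
\begin{equation*}
 |v - \Pi_{\gVh}v|^2_{H^k(\gCell)} \;\leq\; C_{shape}\,\|\det\nabla\geoMap\|_{L^\infty(\pCell)}\,\|\nabla\geoMap\|_{L^\infty(\pCell)}^{-2k}\sum_{j=0}^k |w - \Pi_{\pVh} w|^2_{H^j(\pCell)},
\end{equation*}
and then the parameter-domain estimate converts each $|w-\Pi_{\pVh}w|_{H^j(\pCell)}$ into a power of $h_{\pCell}$ times $|w|_{H^l(\overline{\pCell})}$. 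A second application of Proposition~\ref{prop:HmGeoMapRelation} (the other direction) bounds $|w|_{H^l(\overline{\pCell})}$ by $\sum_{i=0}^l \|\nabla\geoMap\|_{L^\infty(\overline{\pCell})}^{i}\,|v|_{H^i(\overline{\gCell})}$ modulo the usual Jacobian factors. Collecting the powers of $\|\nabla\geoMap\|_{L^\infty}$, absorbing $\det\nabla\geoMap$ and $\|\nabla\geoMap\|^{-2k}$ into $C_{shape}$ (as is allowed on each patch), and using $h_{\pCell}\simeq h_{\gCell}\,\|\nabla\geoMap\|^{-1}_{L^\infty(\pCell)}$ from quasi-uniformity produces exactly the factor $h_{\gCell}^{2(l-k)}\|\nabla\geoMap\|^{2(i-l)}_{L^\infty(\overline{\pCell})}$ appearing in the statement.

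Finally I would sum over all cells $\gCell\in\gMesh$; the support extension $\overline{\gCell}$ has bounded overlap (a fixed number depending only on $\deg$), so the sum of the right-hand side is controlled by the global $H^l$-seminorm contributions. The main obstacle I expect is the bookkeeping of the geometric factors: tracking exactly how the powers of $\|\nabla\geoMap\|_{L^\infty}$ from the two applications of Proposition~\ref{prop:HmGeoMapRelation} (once with exponent $-k$ and once with exponent $i$) combine with the $h_{\pCell}^{l-k}$ factor from the parameter-domain estimate to yield the single $h_{\gCell}^{2(l-k)}\|\nabla\geoMap\|^{2(i-l)}$ factor. For NURBS (as opposed to B-splines) a minor additional step is needed, namely absorbing the weight function into the local projector argument, but this is standard and fully treated in the references cited in the theorem.
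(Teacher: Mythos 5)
The paper does not prove this theorem itself --- it cites \cite{HL:TagliabueDedeQuarteroni:2014a} and Theorem~3.2 of \cite{veiga2006IGA_aopproximation} --- and your sketch reproduces exactly the standard argument of those references: a locally stable, polynomial-reproducing quasi-interpolant in the parameter domain, a Bramble--Hilbert bound on each extended cell, transfer to the physical domain via Proposition~\ref{prop:HmGeoMapRelation}, and summation using the bounded overlap of the support extensions; your accounting of the powers of $\LInfNorm{\nabla\geoMap}{\pCell}$ and of $h_{\pCell}=h_{\gCell}\LInfNorm{\nabla\geoMap}{\pCell}^{-1}$ does combine to the stated factor $h_{\gCell}^{2(l-k)}\LInfNorm{\nabla\geoMap}{\overline{\pCell}}^{2(i-l)}$. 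The only weak spot is your heuristic justification of the hypothesis $l\geq\mul$, which is circular as written, but since that is an assumption of the theorem rather than something to be derived, it does not affect the correctness of the argument.
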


\section{The IsogEometric Tearing and Interconnecting method and its Analysis}
\label{sec:IETI}
The IETI method, 
that was introduced in \cite{HL:KleissPechsteinJuettlerTomar:2012a},
is the adaption of the FETI method  (see, e.g., \cite{toselli2005DDM} or \cite{pechstein2012FETI}) 
to isogeometric analysis. 
According to \cite{ManDohrTez2005_AlgebTheoryFETIDP_BDDC} based on algebraic arguments, the BDDC preconditioner and the FETI-DP method  possess the same spectrum up to zeros and ones.
Hence a condition number bound for BDDC implies a bound for FETI-DP and vice versa. 
Since the proof is based on algebraic arguments, it also holds for the IETI-DP method. 
It was first introduced in \cite{HL:KleissPechsteinJuettlerTomar:2012a}, 
and further analysed in \cite{beirao2013IGA_bddc}.

In the following, let $\gVh$ be the 
conform IgA space
which fulfils the Dirichlet boundary conditions as defined in \secref{sec:DiscreteProblem}. Furthermore, we denote by $\gBSplFam$ the B-Spline basis of this space. 
The idea of IETI is to introduce local spaces, which are independent of each other, and consider only the local problems. The coupling and the continuity across interfaces is received via additional constraints.


\subsection{Local spaces and Jump operator}
Let $\gVh\sMP$ be the space of B-Spline functions defined on patch $\gDom\sMP$, see \equref{def:gVhMP}. Analogously to the splitting introduced in \secref{sec:IGA-disc}, we define the local interface space
\begin{align*}
 \LocVB[\iMP] := \spn\{\gBSpl |\, \supp\{\gBSpl\} \cap (\partial\gDom\sMP \cap \gIntMP) \neq \emptyset, i\in \iSet\},
\end{align*}
which is the restriction of $\gVB$ to $\gDom\sMP$. 
In the following, in order to avoid cumbersome notation,
 we define the patch boundary to be just the interface part, 
i.e. $\partial\gDom\sMP := \partial\gDom\sMP \cap \gIntMP$.  
Hence, we have
\begin{align*}
 \gVh\sMP = \LocVB[\iMP] \oplus \gVI\sMP,
\end{align*}
where $\gVI\sMP$ is defined as in \equref{equ:VIMP} or \equref{equ:VIMP2}.
Furthermore, we define the space of functions, which are locally in $\LocVB[\iMP]$, by
\begin{align*}
   \LocVB := \prod_{\iMP=1}^\nMP \LocVB[\iMP].
\end{align*}
We note that
functions 
from
$\LocVB$ are, in general, not continuous across the interface, i.e. $\LocVB \nsubseteq C^0$.
A function  $w \in \LocVB[\iMP]$ has components $ w := \left[w\sMP\right]_{\iMP=1}^\nMP \leftrightarrow \left[ \wb[\iMP] \right]_{\iMP=1}^\nMP =:\wb$.

In order to obtain continuous functions, we introduce additional constraints
which will enforce the continuity. Let $\IndexSet(k,l)$ be the set of all coupled indices between $\gDom\sMP[k]$ and $\gDom\sMP[l]$, then we enforce the following constraints
\begin{align}
  \label{equ:constrJumpop}
   \wb[k]_i - \wb[l]_j = 0 \qquad \forall (i,j) \in \IndexSet(k,l), k>l.
\end{align}
The operator $B : \LocVB \to \LamSet^*:=\real[\nLag]$, 
which realizes constraints \equref{equ:constrJumpop}
in the form
  \begin{align*}
    Bw = 0,
  \end{align*}
is called \emph{jump operator}.
The space of all functions in $\LocVB$
which belong to the kernel of $B$ 
is denoted by $\hW$, and can be identified with $\gVB$, i.e.
  \begin{align*}
   \hW = \{ w\in \LocVB|\, Bw =0 \} \equiv \gVB.
  \end{align*}

  \subsubsection{Saddle point formulation}
  Due to the multipatch structure of our physical domain, we can decompose the bilinear form and the right hand side functional as follows
  \begin{align*}
    \bil{u_h}{v_h} &= \sum_{\iMP=1}^\nMP \bil[\iMP]{u_h\sMP}{v_h\sMP},\\
    \func{F}{v_h} &= \sum_{\iMP=1}^\nMP \func[\iMP]{F}{v_h\sMP},
  \end{align*}
  where $u_h,v_h\in\gVh$ and according to \secref{sec:IGA-disc} $u_h\sMP,v_h\sMP$ denote its restriction to $\gDom\sMP$. By means of the B-Spline basis we can rewrite the variational problem as linear system
  \begin{align}
  \label{equ:decomVar}
    \left(\sum_{\iMP=1}^\nMP \mat{A}\Kb[\iMP]\mat{A}^T \right)\ub = \sum_{\iMP=1}^\nMP \mat{A}\fvec[\iMP],
  \end{align}
  where $\mat{A}$ is the Boolean patch assembling matrix.
  
  Analogously to \secref{sec:Schurcomp}, we can reorder the entries of the patch local stiffness matrix and right-hand side
  \begin{align*}
    \Kb[\iMP] = \MatTwo{\KBB[\iMP]}{\KBI[\iMP]}{\KIB[\iMP]}{\KII[\iMP]}, \qquad \fvec[\iMP] = \VecTwo{\fvecB[\iMP]}{\fvecI[\iMP]}.
  \end{align*}
  The equation $\Kb[\iMP] \ub[\iMP] = \fvec[\iMP]$ is equivalent to
  \begin{align*}
    \Sb[\iMP] \uBb[\iMP] =\gvec[\iMP],
  \end{align*}
  where $\Sb[\iMP] = \KBB[\iMP] - \KBI[\iMP](\KII[\iMP])^{-1}\KIB[\iMP]$  and $\gvec[\iMP]= \fvecB[\iMP] -  \KBI[\iMP] (\KII[\iMP])^{-1}\fvecI[\iMP]$.
 We obtain that equation \equref{equ:decomVar} can be reformulated as 
  \begin{align}
  \label{equ:decomSchurMat}
    \left(\sum_{\iMP=1}^\nMP \mat{A}\Sb[\iMP]\mat{A}^T \right) \uBb = \sum_{\iMP=1}^\nMP \mat{A}\gvec[\iMP].
  \end{align}
  Similarly, we can express \equref{equ:decomSchurMat} in operator notation as 
  \begin{align}
  \label{equ:decomSchurOp}
   \sum_{\iMP=1}^\nMP \func{\Sop[\iMP] u_{B,h}\sMP}{v_{B,h}\sMP} = \sum_{\iMP=1}^\nMP \func{g\sMP}{v_{B,h}\sMP} \quad \forall v_{B,h} \in \gVB,
  \end{align}
  where $u_{B,h}\in \gVB,\, g\sMP\in{\gVB\sMP}^*$ and $\Sop[\iMP]: \gVB\sMP \to {\gVB\sMP}^*$.

  Instead of defining the global Schur complement and the right-hand side functional on the conforming space $\gVB$, we define an \emph{extended} version of the Schur complement on the ``discontinuous'' space $\LocVB$, i.e.
   \begin{align*}
    \Sop:& \LocVB \to \LocVB^*\\ 
       \func{\Sop v}{w} &:= \sum_{\iMP=1}^\nMP\func{\Sop[\iMP] v\sMP}{w\sMP}  \quad \for v,w \in \LocVB,\\	      
    g&\in \LocVB^*\\
    \func{g}{w} &:= \sum_{\iMP=1}^\nMP\func{g\sMP}{w\sMP}  \quad \for w \in \LocVB.	
   \end{align*}
   Expressed in matrix form, we can write $\Sb$ and $\gvec$ as
   \begin{align*}
       \Sb:= \text{diag}(\Sb[\iMP])_{\iMP=1}^\nMP \quad \gb:=[\gvec[\iMP]]_{\iMP=1}^\nMP.
   \end{align*}
  
   The next step is to reformulate \equref{equ:decomSchurOp} in terms of $\Sop$ and $B$ in the space $\LocVB$. 
Due to the symmetry of $\bil{\cdot}{\cdot}$, we can write \equref{equ:decomSchurOp} as minimization problem
   \begin{align*}
    u_{B,h} = \underset{v\in \gVB}{\text{argmin}} \frac{1}{2} \sum_{\iMP=1}^\nMP \left(\func{\Sop[\iMP] v\sMP}{v\sMP} -  \func{g\sMP}{v\sMP}\right).
   \end{align*}
   It is easy to see that this problem is equivalent to
      \begin{align*}
    u_{B,h} = \underset{w\in \LocVB, Bw = 0}{\text{argmin}} \frac{1}{2} \func{\Sop w}{w} -  \func{g}{w}.
   \end{align*}
   In the following we will only work with the Schur complement system and hence, to simplify the notation, we will use $u$ instead of $u_{B,h}$, when we consider functions in $\gVB$. If there has to be made a distinction between $u_h, u_{B,h}$ and $u_{I,h}$, we will write the subscript letter.
   
This constraint minimization problem can be rewritten in form of  
the following saddle point problem:
find $(u,\lambdavec) \in \LocVB \times \LamSet$ such that
    \begin{align}
    \label{equ:saddlePointSing}
     \MatTwo{\Sop}{B^T}{B}{0} \VecTwo{u}{\lambdavec} = \VecTwo{g}{0},
    \end{align}
    \begin{theorem}
   If $\ker{\Sop} \cap \ker{B} = \{0\}$, then the above problem is uniquely solveable up to adding elements from $\ker{B^T}$ to $\lambdavec$. 
  \end{theorem}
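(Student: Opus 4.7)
The plan is to reduce the saddle point system \equref{equ:saddlePointSing} to the underlying constrained minimization problem and exploit the assumption $\ker{\Sop}\cap\ker{B}=\{0\}$ to obtain positive definiteness on the constraint subspace. First, I would observe that $\Sop$ is symmetric and positive semidefinite on $\LocVB$, since each $\Sop[\iMP]$ is a Schur complement of the symmetric positive semidefinite patch stiffness matrix $\Kb[\iMP]$. Restricting to $\ker{B}$ and using the hypothesis, any $w\in\ker{B}$ with $\func{\Sop w}{w}=0$ must lie in $\ker{\Sop}\cap\ker{B}=\{0\}$, so $\Sop$ is in fact positive definite on the (finite dimensional) subspace $\ker{B}$.

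Next I would solve the primal problem. The constrained minimization problem
\begin{equation*}
u=\underset{w\in\LocVB,\,Bw=0}{\text{argmin}}\;\tfrac{1}{2}\func{\Sop w}{w}-\func{g}{w}
\end{equation*}
has a unique minimizer $u\in\ker{B}$ by the positive definiteness just established. The corresponding Euler--Lagrange equation on the constraint subspace reads $\func{\Sop u}{v}=\func{g}{v}$ for every $v\in\ker{B}$, that is, $g-\Sop u\perp\ker{B}$ in the duality sense. Since we are in finite dimensions, the orthogonal complement of $\ker{B}$ inside $\LocVB^*$ equals the range of $B^T$, so there exists $\lambdavec\in\LamSet$ with $B^T\lambdavec=g-\Sop u$. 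Together with $Bu=0$, this provides a solution $(u,\lambdavec)$ of \equref{equ:saddlePointSing}.

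For uniqueness, suppose $(u_1,\lambdavec_1)$ and $(u_2,\lambdavec_2)$ both solve \equref{equ:saddlePointSing}. Taking the difference, $\delta u:=u_1-u_2$ and $\delta\lambdavec:=\lambdavec_1-\lambdavec_2$ satisfy $\Sop\,\delta u+B^T\delta\lambdavec=0$ and $B\,\delta u=0$. Testing the first equation with $\delta u$ and using $B\,\delta u=0$ yields $\func{\Sop\,\delta u}{\delta u}=0$; since $\Sop$ is positive definite on $\ker{B}$, we conclude $\delta u=0$. Substituting back gives $B^T\delta\lambdavec=0$, i.e., $\delta\lambdavec\in\ker{B^T}$, which is precisely the asserted indeterminacy. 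The main (only) real obstacle is the existence of $\lambdavec$, which reduces to the finite dimensional range/orthogonal complement identity $\range{B^T}=\ker{B}^\perp$; everything else is a direct application of convex quadratic optimization on a subspace.
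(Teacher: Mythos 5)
Your proof is correct. The paper does not actually supply an argument for this theorem -- it is stated without proof, and the analogous result for the regularized system on $\tW$ is later justified only by an appeal to Brezzi's theorem -- so your elementary, self-contained derivation is a legitimate filling-in rather than a divergence. Your route (positive definiteness of $\Sop$ on $\ker{B}$ from the hypothesis $\ker{\Sop}\cap\ker{B}=\{0\}$, unique constrained minimizer, recovery of the multiplier from the finite-dimensional identity $\range{B^T}=(\ker{B})^{\perp}$, and the standard difference argument for uniqueness modulo $\ker{B^T}$) is exactly what Brezzi's conditions reduce to in this finite-dimensional symmetric setting: coercivity on the kernel of the constraint operator replaces the inf--sup condition on $B$ because the quotient by $\ker{B^T}$ is built into the statement. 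The only step worth flagging as implicit is that each $\Sop[\iMP]$ is well defined and positive semidefinite, which requires $\KII[\iMP]$ to be invertible; this holds because interior spline functions vanish on $\partial\gDom\sMP$, so the local interior block is SPD, and the Schur complement of an SPSD matrix with respect to an SPD block is again SPSD. With that remark your argument is complete.
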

\begin{remark}
We note that problem \equref{equ:saddlePointSing} is uniquely solvable 
with respect to $u$ (if $\Kop$ is regular). 
However, not all $\Sop[\iMP]$ are regular, since those, 
which do not lie on a Dirichlet boundary correspond to pure Neumann problems. 
The usual strategy for solving \equref{equ:saddlePointSing} is to work with its Schur complement $F$, but since some blocks of $\Sop$ are singular, the Schur complement is not well defined. In classical FETI/IETI, one adds the basis of each local kernel to the space and regularizes the matrix. Unfortunately, this need an exact knowledge of the kernels, which is in general not trivial. 
The dual primal approach presented below circumvents this by restricting the space $\LocVB$ in order to guarantee the regularity of each $\Sop[\iMP]$.
\end{remark}


\subsection{Intermediate space and primal constraints}

In order to guarantee the positive definiteness of $\Sop$, we are looking for an intermediate space $\tW$
in the sense $\hW\subset \tW \subset \LocVB$ such that $\Sop$ restricted to $\tW$ is SPD.
  Let $\Psi \subset \gVB^*$ be a set of linearly independent \emph{primal variables}.
Then we define the spaces
  \begin{align*}
    \tW &:= \{w\in \LocVB: \forall\psi \in \Psi: \psi(w\sMP[k]) = \psi(w\sMP[l]), \forall k>l  \},\\
    \Wd &:= \prod_{\iMP=1}^\nMP \Wd[\iMP], \quad \text{where } \Wd[\iMP]:=\{w\sMP\in \LocVB[\iMP]: \forall\psi \in \Psi: \psi(w\sMP[\iMP]) =0 \}.
  \end{align*}
  Moreover, we introduce the space $\Wp \subset \hW$, such that
  \begin{align*}
    \tW &= \Wp \oplus \Wd.
  \end{align*}
We call $\Wp$ \emph{primal space} and $\Wd$ \emph{dual space}. For the analysis, the subspace $\Wp$ is not required. However, it brings advantages in the implementation.
  
If we choose $\Psi$, such that $\tW \cap \ker{\Sop}=\{0\}$, then
  \begin{align*}
   \tSop: \tW \to \tW^* \quad \text{with } \func{\tSop v}{w} = \func{\Sop v}{w} \quad \forall v,w \in \tW
  \end{align*}
  is invertible. If a set $\Psi$ fulfils $\tW \cap \ker{\Sop} =\{0\}$, then we say that $\Psi$ \emph{controls the kernel} and in the following, we will always assume that such a set is chosen. 
  
In the literature,
there are the following typical choices for the primal variables $\psi$:
\begin{itemize}
    \item Vertex evaluation: $\psi^\mathcal{V}(v) = v(\mathcal{V})$,
    \item Edge averages: $\psi^\mathcal{E}(v) = \frac{1}{|\mathcal{E}|}\int_{\mathcal{E}}v\,ds$,
    \item Face averages: $\psi^\mathcal{F}(v) = \frac{1}{|\mathcal{F}|}\int_{\mathcal{F}}v\,ds$.
\end{itemize}
The typical choices for $\Psi$ are usually called Algorithm A - C:
\begin{itemize}
    \item Algorithm A: $\Psi = \{\psi^\mathcal{V}\}$,
    \item Algorithm B: $\Psi = \{\psi^\mathcal{V}\} \cup \{\psi^\mathcal{E}\} \cup \{\psi^\mathcal{F}\}$,
    \item Algorithm C: $\Psi = \{\psi^\mathcal{V}\} \cup \{\psi^\mathcal{E}\}$.
\end{itemize}
%
Moreover, one finds references to two further choices for $\Psi$, commonly referred to Algorithm D and E, which are aiming for a reduced set of primal variables, see, e.g. \cite{toselli2005DDM} Algorithm 6.28 and 6.29. This algorithms address the issue of the rapidly increasing number of primal variables.
\begin{remark}
For domains $\gDom\subset\real[2]$, Algorithm A will provide a quasi optimal method for the Poisson problem. By choosing additional primal variables, the coarse problem will grow. 
Hence, it becomes computationally more demanding. 
However, it brings benefits in the condition number. 
For three-dimensional domains, one can show that just choosing vertex evaluation does not lead to a quasi optimal method. In such cases, additional primal variables have to be chosen.
\end{remark}


\subsubsection{IETI - DP}
\label{sec:IETI_DP}
Since $\tW \subset \LocVB$, there is a natural embedding $\emb: \tW \to \LocVB$. 
Let the jump operator restricted to $\tW$ be
\begin{align}
   \label{def:Btilde}
    \tB := B\emb : \tW \to \LamSet^*.
\end{align}
Then we can formulate the saddle point problem in $\tW$ as follows:
find $(u,\lambdavec) \in \tW \times \LamSet:$
    \begin{align}
    \label{equ:saddlePointReg}
     \MatTwo{\tSop}{\tB^T}{\tB}{0} \VecTwo{u}{\lambdavec} = \VecTwo{\tg}{0},
    \end{align}
    where $\tg := \emb^T g$, and $\tB^T= \emb^T B^T$. Here, $\emb^T: \LocVB^* \to \tW^*$ denotes the adjoint of $\emb$, which can be seen as a partial assembling operator.

By construction, $\tSop$ is SPD on $\tW$. Hence, we can define the Schur complement $F$ 
and the corresponding right-hand side of equation \equref{equ:saddlePointReg} 
as follows:
\begin{align*}
    F&:= \tB \tSop^{-1}\tB^T,\\
    d&:= \tB\tSop^{-1} \tg.
\end{align*}
Hence, the saddle point system is equivalent to solving:
\begin{align}
   \label{equ:SchurFinal}
      \text{find } \lambdavec \in \LamSet: \quad F\lambdavec = d.
\end{align}
By means of 
Brezzi's theorem,
we obtain
  \begin{theorem}
   The above problem is uniquely solvable up to adding elements from $\ker{\tB^T}$ to $\lambdavec$. The unique solution
   \begin{align*}
      u = \tSop^{-1}(\tg-\tB^T\lambdavec)
   \end{align*}
   satisfies $u \in \hW \equiv \gVB$ and is the unique solution of \equref{equ:ModelDisc}.
  \end{theorem}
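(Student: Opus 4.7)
The plan is to verify the hypotheses of Brezzi's theorem for the saddle point system \equref{equ:saddlePointReg} and then to identify its unique primal component with the solution of \equref{equ:ModelDisc}. Concretely, three things have to be checked: coercivity of $\tSop$ on the kernel of $\tB$, an inf-sup (surjectivity modulo $\ker{\tB^T}$) property for $\tB$, and the compatibility of the right-hand side. The coercivity is already granted by the construction of $\tW$: the assumption that $\Psi$ controls the kernel means $\tW\cap\ker{\Sop}=\{0\}$, and since $\Sop$ is positive semidefinite on $\LocVB$, its restriction $\tSop$ is symmetric positive definite on all of $\tW$, in particular on $\ker{\tB}$. Compatibility of the right-hand side $(\tg,0)$ with $\ker{\tB^T}$ is automatic because the second component is zero.

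The remaining ingredient is the inf-sup condition. I would argue it as follows: $\tB$ maps $\tW$ into the finite dimensional space $\LamSet^*$, so up to restriction to $\range{\tB}\subset\LamSet^*$ the operator is surjective, and its transpose is injective modulo $\ker{\tB^T}$. Equivalently, viewing the system in matrix form, the constraint block $\tB$ may be rank deficient in its rows because of redundant coupling constraints from \equref{equ:constrJumpop} at crosspoints of more than two patches, but this redundancy exactly accounts for $\ker{\tB^T}$. Applying the general saddle point theorem then yields existence of a solution $(u,\lambdavec)\in\tW\times\LamSet$, uniqueness of $u$, and uniqueness of $\lambdavec$ only modulo $\ker{\tB^T}$. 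Equivalently, the reduced Schur complement equation $F\lambdavec=d$ is solvable, and the set of solutions is an affine subspace parallel to $\ker{\tB^T}$; the reconstructed $u=\tSop^{-1}(\tg-\tB^T\lambdavec)$ is independent of the choice of representative because $\tB^T$ annihilates $\ker{\tB^T}$.

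Next I would show $u\in\hW$. By construction of the saddle point problem, the second block equation is $\tB u=0$. Using the definition $\tB=B\emb$ from \equref{def:Btilde} and identifying $u$ with $\emb u\in\LocVB$, this says $Bu=0$, i.e. $u\in\ker{B}=\hW$, so $u$ is continuous across interfaces and, under the identification $\hW\equiv\gVB$, belongs to $\gVB$.

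Finally, I would verify that this $u$ is the interface component of the discrete solution. For any test function $v\in\hW\subset\tW$ we have $\tB^T\lambdavec$ tested against $v$ equal to $\func{\lambdavec}{Bv}=0$, so the first block equation $\tSop u+\tB^T\lambdavec=\tg$ reduces to $\func{\tSop u}{v}=\func{\tg}{v}$ for all $v\in\hW$; by the definitions of $\tSop$ and $\tg$ via patchwise assembly, this is exactly the interface variational identity \equref{equ:decomSchurOp}, hence coincides with \equref{equ:VarSchur} and determines $u_{B,h}$. Recovering $u_{I,h}$ from the patchwise interior systems then yields the unique solution of \equref{equ:ModelDisc}. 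The main obstacle, and where most care is needed, is the bookkeeping around $\ker{\tB^T}$: one must be explicit that the redundancy of the jump constraints leaves $\lambdavec$ non-unique but, crucially, does not affect $\tB^T\lambdavec$ and therefore leaves $u$ unique, a point that is easy to skip but essential for the statement of the theorem.
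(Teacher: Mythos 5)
Your proposal is correct and follows exactly the route the paper takes: the paper simply invokes Brezzi's theorem for the saddle point system \equref{equ:saddlePointReg}, and your verification of coercivity of $\tSop$ on $\tW$ (hence on $\ker{\tB}$), the inf-sup condition modulo the redundant constraints in $\ker{\tB^T}$, the recovery of $u\in\hW$ from $\tB u=0$, and the reduction to \equref{equ:decomSchurOp} when testing against $v\in\hW$ is precisely the argument the paper leaves implicit. Your explicit remark that $\tB^T\lambdavec$, and hence $u$, is unaffected by the non-uniqueness of $\lambdavec$ is a worthwhile detail the paper omits.
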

We note that $F$ is SPSD on $\LamSet$. 
According to \cite{pechstein2012FETI}, if we 
set
\begin{align*}
   \tLamSet &:= \LamSet_{/\ker{\tB^T}},\\
   \tLamSet^*&:= \range{\tB},
\end{align*}
 where $\tLamSet^*$ is in fact the dual of $\tLamSet$, 
then $F$ restricted to $\tLamSet^*$ is SPD, i.e. $F_{|\tLamSet}: \tLamSet\to\tLamSet^*$. 
Hence, it is possible to solve \equref{equ:SchurFinal} with PCG.


\subsubsection{BDDC - Preconditioner}
\label{sec:BDDC}
  Following the lines in \cite{pechstein2012FETI} a different but equivalent way 
  leads to
  the so called \emph{BDDC - Preconditioner}. 
Here  we start from the equation
  \begin{align}
    \label{equ:Schurhat}
   \hat{\Sop}\hat{u} = \hat{g},
  \end{align}
%
where the notation  $\hat{\cdot}$ indicates  
that the operator and the functions are restricted to the continuous space $\gVB$, i.e., just the standard Schur complement. 
Since $\hat{\Sop}$ is SPD, we can solve system \equref{equ:Schurhat} 
by the PCG method preconditioned by the so called BDDC preconditioner $M^{-1}_{BDDC}$.
  
As in the previous section, using a set of linearly independent primal variables $\Psi$ and the corresponding spaces, we define the BDDC preconditioner as follows
\begin{align*}
   M^{-1}_{BDDC} := \widetilde{E}_D \tSop^{-1} \widetilde{E}_D^T,
\end{align*}
where $\widetilde{E}_D^T$ is defined via the formulas
\begin{align*}
  \widetilde{E}_D &= E_D \widetilde{I}: \quad \tW\to\hW, \\\
  E_D &= I - P_D: \quad  \LocVB \to \hW, \\
  P_D &= B_D^T B: \quad\LocVB \to \LocVB.
\end{align*}
One can give an alternative formulation 
see, e.g., \cite{beirao2013IGA_bddc}. 
Here we assume, that after a change of basis, each primal variable corresponds to one degree of freedom, see, e.g., \cite{klawonnWidlung2006_FETIDP_linearElasticity}. Let $\Kb[\iMP] $ be the stiffness matrix corresponding to $\gDom\sMP$. We splitting the degrees of freedom into interior $(I)$ and interface $(B)$ ones. 
Furthermore, we again split the interface degrees of freedoms into primal $(\Pi)$ and dual $(\Delta)$ ones. 
This provides a partition of $\Kb[\iMP] $ into 2 x 2 and 3 x 3 
block systems:
\begin{align*}
 \Kb[\iMP]= \MatTwo{\KII[\iMP]}{\KIB[\iMP]}{\KBI[\iMP]}{\KBB[\iMP]}  = \MatThree{\KII[\iMP]}{\GenMatXX[\iMP]{\Kb}{I\Delta}}{\GenMatXX[\iMP]{\Kb}{I\Pi}}
 {\GenMatXX[\iMP]{\Kb}{\Delta I}}{\GenMatXX[\iMP]{\Kb}{\Delta \Delta}}{\GenMatXX[\iMP]{\Kb}{\Delta \Pi }}
 {\GenMatXX[\iMP]{\Kb}{\Pi I}}{\GenMatXX[\iMP]{\Kb}{\Pi \Delta}}{\GenMatXX[\iMP]{\Kb}{\Pi \Pi}}.
\end{align*}
In order to the define the preconditioner, we need the following restriction and interpolation operators:
\begin{align}
 \begin{split}
  \RestrBD:& \tW \to \Wd,\\
  \RestrBP:& \tW \to \Wp,\\
  \RestrD[\iMP]:& \Wd \to \Wd[\iMP],\\
  \RestrP[\iMP]:& \Wp \to \Wp[\iMP].
 \end{split}
\end{align}
We now derive a scaled version $\RestrScD[\iMP]$ from $\RestrD[\iMP]$ by multiplying its $i$-th row with $ {\delta^{\dagger}_i}\sMP$,
and then we define
\begin{align}
 \RestrScB:= \RestrBP \oplus \left(\sum_{\iMP=1}^{\nMP} \RestrScD[\iMP]\right)\RestrBD.
\end{align}
The BDDC preconditioner is 
now determined by 
\begin{align*}
 M^{-1}_{BDDC} := \RestrScB^T \tSop^{-1} \RestrScB,
\end{align*}
where
\begin{align*}
 \tSop^{-1} = \RestrBD^T\left(\sum_{\iMP=1}^{\nMP}\begin{bmatrix}0 & {\RestrD[\iMP]}^T\end{bmatrix} \MatTwo{\KII[\iMP]}{\GenMatXX[\iMP]{\Kb}{I\Delta}}{\GenMatXX[\iMP]{\Kb}{\Delta I}}{\GenMatXX[\iMP]{\Kb}{\Delta \Delta}}^{-1} \VecTwo{0}{\RestrD[\iMP]}\right)\RestrBD+ \Phi \SPP^{-1} \Phi^T.
\end{align*}
Here the matrices $\SPP$ and $\Phi$ are given by
\begin{align*}
 \SPP=\sum_{\iMP=1}^{\nMP}{\RestrP[\iMP]}^T\left(\GenMatXX[\iMP]{\Kb}{\Pi \Pi}- \begin{bmatrix}\GenMatXX[\iMP]{\Kb}{\Pi I} & \GenMatXX[\iMP]{\Kb}{\Pi \Delta}\end{bmatrix} \MatTwo{\KII[\iMP]}{\GenMatXX[\iMP]{\Kb}{I\Delta}}{\GenMatXX[\iMP]{\Kb}{\Delta I}}{\GenMatXX[\iMP]{\Kb}{\Delta \Delta}}^{-1} \VecTwo{\GenMatXX[\iMP]{\Kb}{ I \Pi}}{\GenMatXX[\iMP]{\Kb}{\Delta \Pi}}\right)\RestrP[\iMP]
\end{align*}
and 
%
\begin{align*}
 \Phi = \RestrBP^T - \RestrBD^T\sum_{\iMP=1}^{\nMP}\left(\begin{bmatrix}0 &  {\RestrD[\iMP]}^T\end{bmatrix} \MatTwo{\KII[\iMP]}{\GenMatXX[\iMP]{\Kb}{I\Delta}}{\GenMatXX[\iMP]{\Kb}{\Delta I}}{\GenMatXX[\iMP]{\Kb}{\Delta \Delta}}^{-1} \VecTwo{\GenMatXX[\iMP]{\Kb}{ I \Pi}}{\GenMatXX[\iMP]{\Kb}{\Delta \Pi}}\right)\RestrP[\iMP],
\end{align*}
respectively.
\subsection{Preconditioning}
\label{sec:IETI_precond}
The existent theory for FETI provides us with the following estimate for the condition number of $F$, see, e.g., \cite{pechstein2012FETI},
    \begin{theorem}
     Let $H\sMP$ be the diameter of patch $\iMP$. Then, under suitable assumptions imposed on the mesh, we have 
     \begin{align*}
       \kappa(F_{|\tLamSet}) \leq C \left(\max_\iMP\frac{H\sMP}{h\sMP}\right),
     \end{align*}
     where the constant positive $C$ is dependent on $\alpha$.
    \end{theorem}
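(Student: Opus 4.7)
The plan is to invoke the abstract FETI--DP framework, localize to each patch by means of the discrete NURBS harmonic extension $\mathcal{H}$, and then carry patch-local estimates from the parameter domain to the physical domain using the geometrical-mapping equivalences of Corollary~\ref{cor:L2H1GeoMapRelation}. Concretely, the starting point is the well-known identity/inequality
\begin{align*}
\kappa\bigl(F_{|\tLamSet}\bigr) \;\leq\; \|P_D\|_{\tSop}^2 \;=\; \sup_{w\in \tW\setminus\{0\}}\frac{\func{\tSop P_D w}{P_D w}}{\func{\tSop w}{w}},
\end{align*}
which follows from the abstract analysis in \cite{pechstein2012FETI} once one has chosen a set of primal constraints $\Psi$ that controls the kernel (so that $\tSop$ is SPD on $\tW$). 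Hence the whole task reduces to bounding the jump-projection operator $P_D=B_D^T B$ in the $\tSop$-seminorm on the intermediate space $\tW$.

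First I would decompose the seminorm patch-by-patch. Writing $w=(w\sMP)_{k=1}^\nMP\in\tW$ and using that $\tSop$ is block-diagonal,
\begin{align*}
\func{\tSop P_D w}{P_D w} \;=\; \sum_{k=1}^\nMP \func{\Sop[k] (P_D w)\sMP}{(P_D w)\sMP},
\end{align*}
and similarly for the denominator. Next, on each patch $\gDom\sMP$ the Schur-complement seminorm is the energy of the discrete NURBS harmonic extension, i.e.\ $\func{\Sop[k] v\sMP}{v\sMP} = |\mathcal{H}(v\sMP)|_{H^1(\gDom\sMP)}^2$ (up to the coefficient $\alpha$). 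Using Corollary~\ref{cor:L2H1GeoMapRelation}, this physical-domain seminorm is equivalent, with constants depending only on $\geoMap\sMP$, to the $H^1$-seminorm of the corresponding harmonically extended parameter-domain function on $\pDom$. Consequently it suffices to prove an $H/h$ bound for $P_D$ when everything is pulled back to the parameter domain.

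On the parameter domain, the standard route is: (i) control $(P_D w)\sMP$ on $\partial\pDom\sMP$ by jumps of $w$ on the parameter interfaces via the definition of $B_D=D^{-1}B$ and the partition-of-unity property of the scaling $\Scal$; (ii) apply a discrete $H^{1/2}$-trace and a discrete harmonic extension estimate on B-spline spaces to bound the energy of the harmonic extension of these jumps; and (iii) invoke the patch-local Poincaré/Friedrichs-type inequality, which is where the primal constraints in $\Psi$ enter to kill the kernel modes, yielding a bound of the type
\begin{align*}
|\mathcal{H}((P_D w)\sMP)|_{H^1(\pDom)}^2 \;\leq\; C\,\frac{H\sMP}{h\sMP}\;\sum_{l\in \mathcal{N}(k)}|\mathcal{H}(w\sMP[l])|_{H^1(\pDom)}^2,
\end{align*}
where $\mathcal{N}(k)$ is the set of neighbouring patches. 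Summing over $k$ and mapping back to the physical domain through Corollary~\ref{cor:L2H1GeoMapRelation} then gives the asserted bound, with the dependence on $\alpha$ absorbed into $C$ through the scaling $\Scal$.

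The main obstacle is step (ii): the discrete harmonic-extension estimate for B-spline traces. In the classical FE setting this relies on nodal interpolation and inverse inequalities on simplices, neither of which is directly available for non-nodal, higher-continuity B-splines with large support. One therefore needs a suitable B-spline trace/extension inequality; here I would lean on the B-spline quasi-interpolation and the approximation estimate of \thmref{thm:approxPower} together with an inverse inequality on the parameter mesh $\pMesh\sMP$ to establish the required bound at the cost of the linear factor $H\sMP/h\sMP$, which is exactly the weaker, not-yet-polylogarithmic bound claimed. Improving this step to $(1+\log(H/h))^2$ is precisely what the subsequent sharper analysis with stiffness scaling is designed to achieve.
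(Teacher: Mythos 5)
The statement you are asked to prove concerns the \emph{unpreconditioned} operator $F = \tB\,\tSop^{-1}\tB^{T}$, and the paper itself does not prove it but simply cites the classical FETI/FETI-DP literature (\cite{pechstein2012FETI}). Your argument, however, starts from
$\kappa(F_{|\tLamSet}) \leq \|P_D\|_{\tSop}^{2}$,
which is not a valid bound for $F$: that inequality is the standard abstract estimate for the \emph{preconditioned} operator $M_{sD}^{-1}F_{|\tLamSet}$ (equivalently, for the BDDC-preconditioned Schur system). The operator $P_D = B_D^{T}B$ is built from the scaled jump operator $B_D$, which enters only through the scaled Dirichlet preconditioner and plays no role whatsoever in the spectrum of $F$ alone. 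Consequently, everything you do afterwards — bounding $P_D$ patch by patch, invoking the partition-of-unity property of the scaling $\Scal$, absorbing the $\alpha$-dependence ``through the scaling'' — estimates a different quantity. Even if your steps (i)--(iii) were carried out rigorously, you would have shown $\kappa(M_{sD}^{-1}F_{|\tLamSet}) \leq C\,H/h$, which neither implies nor is implied by $\kappa(F_{|\tLamSet}) \leq C\,H/h$; indeed the paper's next theorem shows the preconditioned operator actually satisfies the much stronger polylogarithmic bound. The fact that the statement's constant $C$ \emph{depends on} $\alpha$ is itself a signal that no coefficient scaling is involved here.

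The correct (and more elementary) route is a direct Rayleigh-quotient argument: write
$\func{F\lambdavec}{\lambdavec} = \func{\tSop^{-1}\tB^{T}\lambdavec}{\tB^{T}\lambdavec}$,
bound it above by $\lambda_{\min}(\tSop)^{-1}\|\tB^{T}\lambdavec\|^{2}$ and below (on $\tLamSet$) by $\lambda_{\max}(\tSop)^{-1}\|\tB^{T}\lambdavec\|^{2}$, and then use that $\|\tB^{T}\lambdavec\|$ is uniformly equivalent to $\|\lambdavec\|$ on the quotient space $\tLamSet$. This reduces the claim to the spectral condition number of the block-diagonal Schur complement $\tSop$, i.e.\ of the local Schur complements $\Sop[\iMP]$ restricted to $\tW$. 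The factor $H\sMP/h\sMP$ is exactly the condition number of a local Schur complement of a second-order elliptic stiffness matrix (maximal eigenvalue of order $\alpha\, h^{d-2}$, minimal eigenvalue of order $\alpha\, h^{d-1}/H$ once the kernel is removed by the primal constraints), and the $\alpha$-dependence of $C$ enters through these eigenvalue bounds when the coefficient is heterogeneous. Your patch-local pull-back to the parameter domain via Corollary~\ref{cor:L2H1GeoMapRelation} and the discussion of B-spline trace/extension inequalities would be the right ingredients for the \emph{preconditioned} estimate, not for this one.
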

%
Since this condition number bound is not optimal in comparison with other precondition strategies like multigrid, there is a need for additional preconditioning of $F$. 
This can be done by the so called  \emph{Dirichlet preconditioner} or its scaled versions. For finite elements, the Dirichlet preconditioner provides a quasi optimal condition number bound, which is not robust with respect to the diffusion coefficient. The \emph{Dirichlet preconditioner} $M_D^{-1}$ is defined as
      \begin{align*}
	M_D^{-1} = B \Sop B^T.
      \end{align*}
We note that this preconditioning just uses the block diagonal version of the Schur complement. Hence, the application can be done in parallel.

In order to receive robustness with respect to the diffusion coefficient $\alpha$, we use the scaled version the of Dirichlet preconditioner, the so called \emph{scaled Dirichlet preconditioner}. The scaling is incorporated in the application of the jump operator. Therefore, we define the \emph{scaled} jump operator $B_D$, such that the operator enforces the constraints:
    \begin{align*}
     &\Scal[j]\sMP[l]\wb[k]_{i} - \Scal[i]\sMP[k]\wb[l]_{j} = 0  \qquad \forall (i,j) \in \IndexSet(k,l), k>l, \\
      &\text{with } \Scal[i]\sMP = \frac{\rho\sMP_i}{\sum_{l} \rho\sMP[l]_{j_l}},
    \end{align*}
    where $j_l$ is the corresponding coefficient index on the neighbouring patch $\gDom\sMP[l]$. The scaled Dirichlet preconditioner has the following form
   \begin{align}
   \label{equ:scaled_Dirichlet}
	M_{sD}^{-1} = B_D \Sop B_D^T.
   \end{align}
   Typical choices for $\rho\sMP_i$ are
       \begin{itemize}
     \item Multiplicity Scaling: $\rho\sMP_i = 1$,
     \item Coefficient Scaling: If $\alpha(x)_{|\gDom\sMP} = \alpha\sMP$, choose $\rho\sMP_i = \alpha\sMP$, 
     \item Stiffness Scaling: $\rho\sMP_i = \Kb\sMP_{i,i}$. 
    \end{itemize}
  If the diffusion coefficient $\alpha$ is constant and identical on each patch, then the multiplicity and the coefficient scaling are the same. If there is only a little variation in $\alpha$, then the multiplicity scaling provides good results. If the variation is too large, one should use the other scalings to obtain robustness. 
    \begin{theorem}
     Let $H\sMP$ be the diameter and $h\sMP$ the local mesh size of $\gDom\sMP$ and let $M_{sD}^{-1}$ be the scaled Dirichlet preconditioner.
     
     Then, under suitable assumption imposed on the mesh, we have 
     \begin{align*}
       \kappa(M_ {sD}^{-1} F_{\tLamSet}) \leq C \max_\iMP\left(1+\log\left(\frac{H\sMP}{h\sMP}\right)\right)^2,
     \end{align*}
     where the positive constant $C$ is independent of $h$ and $H$.
    \end{theorem}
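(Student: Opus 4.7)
The plan is to prove the bound via the standard BDDC/FETI--DP equivalence: since $\kappa(M_{sD}^{-1}F_{|\tLamSet})$ and $\kappa(M^{-1}_{BDDC}\hat\Sop)$ agree (up to zeros and ones) by \cite{ManDohrTez2005_AlgebTheoryFETIDP_BDDC}, it suffices to establish the bound for the BDDC--preconditioned system. The abstract Schwarz framework then reduces everything to a single key estimate: there exists $\omega>0$ such that
\begin{align*}
\bigl| \widetilde E_D \tilde w \bigr|_{\tSop}^2 \;\leq\; \omega \, |\tilde w|_{\tSop}^2 \qquad \forall \tilde w \in \tW,
\end{align*}
with $\omega = C\,\max_\iMP(1+\log(H\sMP/h\sMP))^2$. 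Together with the stability of the partial assembling operator $\widetilde I:\tW\to \hW$ in the $\tSop$--seminorm, this yields $\kappa(M^{-1}_{BDDC}\hat\Sop)\leq \omega$.

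First I would rewrite $\widetilde E_D=I-P_D$ on $\tW$ and exploit that $(I-P_D)\tilde w\in \hW$ coincides with $\tilde w$ on the ``averaged'' interface values, while $P_D\tilde w$ encodes the patch--wise jumps weighted by the stiffness scaling $\Scal[i]\sMP$. Hence it is enough to bound $|P_D\tilde w|_{\tSop}^2$, and since $\tSop$ is block--diagonal, this splits into patch--local contributions $|(P_D\tilde w)\sMP|_{\Sop[\iMP]}^2$. Each such term is the energy of a discrete NURBS--harmonic extension of a function supported on $\partial\gDom\sMP$ that is a weighted jump across the interfaces, and by \corref{cor:L2H1GeoMapRelation} it is equivalent (up to constants depending only on the parametrization) to the corresponding energy on the parameter patch $\pDom$, so the analysis can be pulled back patch--wise.

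Next I would localize further by decomposing the interface of each patch into its geometric constituents (faces, edges, vertices in 3D; edges and vertices in 2D). Using that the primal constraints $\Psi$ fix the vertex values (and, in Algorithms~B/C, edge/face averages), the jump $(P_D\tilde w)\sMP$ has vanishing primal moments on each subobject. For each subobject $\mathcal{O}$, I would write the restriction as a cutoff--multiplication $\theta_{\mathcal{O}} \cdot \eta$, where $\theta_{\mathcal{O}}$ is a B--spline partition--of--unity function associated with $\mathcal{O}$, and $\eta$ is the weighted jump. The energy of the discrete harmonic extension of such a product is then controlled by the $H^{1/2}_{00}(\mathcal{O})$--type seminorm of $\eta$, which, thanks to the vanishing moment and the isogeometric trace and inverse inequalities from \cite{beirao2013IGA_bddc,veiga2006IGA_aopproximation}, satisfies an edge/face lemma of the form
\begin{align*}
\bigl\| \DHE{\theta_{\mathcal O}\eta} \bigr\|_{\Sop[\iMP]}^2 \;\leq\; C\bigl(1+\log(H\sMP/h\sMP)\bigr)^2\,\bigl\| \DHE{\eta} \bigr\|_{\Sop[\iMP]}^2 .
\end{align*}
Summing over $\iMP$ and $\mathcal{O}$, using that the stiffness scaling gives $\sum_{l\in\mathcal N(\mathcal O)}\Scal[i]\sMP[l]=1$, telescopes the weighted jumps into a bound by $|\tilde w|_{\tSop}^2$, which yields the claimed polylogarithmic factor.

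The main obstacle is the IgA--specific edge/face lemma: unlike the FE case one cannot multiply by a nodal cutoff and stay in the discrete space, because B--splines are not interpolatory except at the boundary knots. I would circumvent this by using a spline quasi--interpolant $\Pi_{\gVh}$ as in \thmref{thm:approxPower} to project $\theta_{\mathcal O}\eta$ back into $\gVh\sMP$, paying only a constant independent of $h$, and invoking the IgA Poincaré and trace estimates of \cite{beirao2013IGA_bddc} to absorb the logarithmic factor; the stiffness--scaling variant additionally requires an equivalence $\Kb\sMP_{i,i}\simeq \alpha\sMP h^{d-2}\sMP$ coming from the quasi--uniformity assumption and \propref{prop:HmGeoMapRelation}, so that the scaling behaves like the coefficient scaling entry--wise and the same proof structure goes through. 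The dependence on the individual geometrical mappings $\geoMap\sMP$ enters only through the $C_{shape}$ constants of \propref{prop:HmGeoMapRelation}, hence is absorbed in $C$.
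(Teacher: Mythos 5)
Your overall skeleton is sound and your first reduction (pass from $\kappa(M_{sD}^{-1}F_{|\tLamSet})$ to the BDDC operator via the spectral equivalence of \cite{ManDohrTez2005_AlgebTheoryFETIDP_BDDC}) is exactly the step the paper relies on. From there, however, you take a genuinely different route. You follow the classical FETI--DP/Schwarz analysis: bound $|P_D\tilde w|_{\tSop}^2$ patchwise, localize onto edges/faces with cutoff functions $\theta_{\mathcal O}$, and invoke an edge/face lemma carrying the factor $(1+\log(H\sMP/h\sMP))^2$. The paper instead follows \cite{beirao2013IGA_bddc}: it uses the abstract bound of \thmref{thm:abstractBDDC}, which requires only a \emph{boundary seminorm} $\DiscLLocNorm{\cdot}{\LocVB[\iMP]}$ satisfying \equref{equ:abstractBDDC_1}--\equref{equ:abstractBDDC_3}, and constructs that seminorm directly from the B-spline \emph{coefficients}: the $H^{1/2}$ seminorm of the piecewise linear interpolant of the control values on each edge plus first differences of the coefficients. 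All spline-specific difficulties are then transferred to the index grid, where the polylogarithmic factor is supplied by the standard $\mathcal{Q}_1$ discrete-harmonic-extension result (\thmref{thm:realMatProp}), and the link back to $s\sMP(\cdot,\cdot)$ is made through the norm equivalences of \lemref{lem:EquDiscL2Loc}--\propref{lem:EquDiscH1} and \corref{cor:L2H1GeoMapRelation}. What the paper's route buys is precisely the avoidance of your main obstacle; what your route would buy, if completed, is a proof that does not hinge on interpreting coefficients as nodal values on a uniform grid (and hence would not be tied to the two-dimensional, tensor-product matrix picture the paper's argument uses).

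The genuine gap in your proposal is the IgA edge/face lemma itself. You correctly identify that $\theta_{\mathcal O}\eta$ leaves the spline space, but the repair you propose --- project back with the quasi-interpolant $\Pi_{\gVh}$ of \thmref{thm:approxPower} ``paying only a constant independent of $h$'' --- is asserted, not established. \thmref{thm:approxPower} gives approximation estimates in integer-order Sobolev seminorms on support extensions for functions in $H^l(\gDom)$; it does not give the $H^{1/2}_{00}(\mathcal O)$-stability of $\Pi_{\gVh}$ applied to products of splines with cutoff functions, nor the discrete Sobolev/trace inequalities for splines that your telescoping argument needs, and these do not follow from anything stated in the paper. This is exactly the technical content that the coefficient-based construction is designed to bypass, so as written your argument has a hole at its central estimate. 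Two smaller points: the paper's proof of the polylog bound is carried out only for $d=2$ (the coefficients are arranged as an $\nBasis_1\times\nBasis_2$ matrix), so a proof covering the 3D claim would indeed require something closer to your localization; and for the stiffness scaling the paper does not use an entrywise equivalence $\Kb\sMP_{i,i}\simeq \alpha\sMP h\sMP^{d-2}$ but rather a modified scaling with one representative value per patch, using only $\Scal[i]\sMP\leq 1$ and the constancy $\Scal[i]\sMP=\Scal[i+1]\sMP$ along an edge --- the numerical results in \tabref{table:pDependence2D} even suggest that the unmodified entrywise scaling behaves differently, so your claimed equivalence should not be taken for granted.
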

   In the case of IgA, a more general proof in the sense, that not only $C^0$ smoothness across patch interfaces is allowed but also $C^l,\,l\geq 0$ smoothness, can be found in \cite{beirao2013IGA_bddc}. However, the proof is restricted to the case of a domain decomposition,
   which is obtained by subdividing a single patch, i.e. performing a decomposition of the parameter domain. Hence, always the same geometrical mapping $\geoMap$ is used. Furthermore, due to the $C^l,\,l\geq 0$, smoothness across interfaces, only a condition number bound of $O\left((1+\log H/h)H/h\right)$ could be proven for stiffness scaling. In the proceeding section, we will extend the proof given in \cite{beirao2013IGA_bddc} to multipatch domains, which consists of different geometrical mappings $\geoMap\sMP$ for each patch. Additionally, for $l=0$, we again obtain quasi-optimal condition number bounds also for stiffness scaling.
 \subsection{Analysis of BDDC-Preconditiner}
 \label{sec:AnalysisBDDC}
 In this section we rephrase the results and notations established in  \cite{beirao2013IGA_bddc}, and extend them to multipatch domains, consisting of a different geometrical mapping $\geoMap\sMP$ for each patch. However, we only allow $C^0$ smoothness across the patch interfaces.
 \subsubsection{General Results}
 Let $\g{z}$ be a function from $\gVh$. Its restriction to a patch $\gDom\sMP$ belongs to $\gVh\sMP$, and can be written as
 \begin{align*}
   \g{z}\sMP(\x) := \g{z}(\x)_{|\gDom\sMP} = \sum_{i\in\iSet\sMP} c_i\sMP \gBSpl\sMP(\kn),
 \end{align*}
 where in $\iSet\sMP$ all indices, where the basis functions have a support on the Dirichlet boundary in the physical space $\gDom\sMP$, are excluded. The corresponding spline function in the parameter space is denoted by $z\sMP(\kn)\in\pVh\sMP$.It is important to note, that the geometrical map $\geoMap$ and its inverse $\geoMap^{-1}$ are independent of $h$, since it is fixed on a coarse discretization. When the domain becomes refined, $\geoMap$ stays the same. Clearly, the same applies for the gradients and it can be assumed, that $\geoMap\sMP \in W^{1,\infty}(\pDom) $ for all $\iMP \in\{1,\ldots,\nMP\}$.
 
 Let $\pCell\in\pMesh\sMP$ be a cell in the parameter domain $\pDom$, and $\gCell\in\gMesh\sMP$ be a cell in the physical domain $\gDom\sMP$. Then we denote all indices of basis functions, which have a support on $\pCell$ and $\gCell$, respectively, by 
 \begin{align}
  \iSet\sMP(\pCell)&:=\{i\;|\;\pCell\subseteq\supp\{\BSpl\sMP\}\} =\{i\;|\;\gCell\subseteq\supp\{\gBSpl\sMP\}\}.
 \end{align}
 
 We will now define a local discrete norm based on control points $c_i$.
 \begin{definition}
 Let $\pCell\in\pMesh$, $\gCell = \geoMap\sMP(\pCell)$, $\g{z}\in \gVh\sMP$ and $z$ its counterpart in the parameter domain. We define
 \begin{align*}
  \DiscLLocNorm{z}{\pCell}^2 := \left(\max_{i\in \iSet\sMP(\pCell)}|c_i\sMP|^2\right)h_{\pCell}^2,\\
  \DiscLLocNorm{\g{z}}{\gCell}^2 := \left(\max_{i\in \iSet\sMP(\pCell)}|c_i\sMP|^2\right)h_{\gCell}^2,
 \end{align*}
 where  $h_{\gCell} =\LInfNorm{\nabla\geoMap}{\pCell} h_{\pCell}$.
 \end{definition}
 \begin{remark}
 Note that the two discrete norms are obviously equivalent, since
  \begin{align}
  \begin{split}
  \DiscLLocNorm{z}{\pCell}^2 &= \left(\max_{i\in \iSet\sMP(\pCell)}|c_i|^2\right)h_{\pCell}^2 = \left(\max_{i\in \iSet\sMP(\pCell)}|c_i|^2\right) \LInfNorm{\nabla\geoMap}{\pCell}^{-2} h_{\gCell}^2\\
  &= \LInfNorm{\nabla\geoMap}{\pCell}^{-2}\DiscLLocNorm{\g{z}}{\gCell}^2.
  \end{split}
 \end{align}
 \end{remark}
 \begin{lemma}
 \label{lem:EquDiscL2Loc}
Let $\pCell\in\pMesh$, $\gCell = \geoMap\sMP(\pCell)$, $\g{z}\in \gVh\sMP$ and $z$ its counterpart in the parameter domain, then
 \begin{align}
  \LTwoNorm{z}{\pCell} \approx \DiscLLocNorm{z}{\pCell}.
 \end{align}
 \end{lemma}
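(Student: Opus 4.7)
The plan is to prove the two inequalities separately, using only the standard properties of the univariate/tensor B-spline basis that have been recalled earlier: non-negativity (Property~2), partition of unity (Property~1), local support with only a bounded number $(\deg{+}1)^\pDim$ of basis functions non-vanishing on a single cell (Properties~3 and~4), and the quasi-uniformity of the mesh.

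For the upper bound $\LTwoNorm{z}{\pCell} \lesssim \DiscLLocNorm{z}{\pCell}$, I would restrict the expansion of $z$ to the cell $\pCell$, obtaining $z|_{\pCell}=\sum_{i\in\iSet\sMP(\pCell)} c_i\sMP \BSplG{i}{\deg}\sMP$. Using non-negativity and partition of unity termwise gives the pointwise estimate $|z(\kn)|\leq \max_{i\in\iSet\sMP(\pCell)}|c_i\sMP|\cdot\sum_i \BSplG{i}{\deg}\sMP(\kn)=\max_{i\in\iSet\sMP(\pCell)}|c_i\sMP|$ for every $\kn\in\pCell$. Integrating the square over $\pCell$ and noting $|\pCell|\lesssim h_\pCell^\pDim$ (which under the convention of the paper becomes the factor $h_\pCell^2$ appearing in the definition) yields the desired bound with a constant depending only on $\pDim$ and $\deg$.

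For the lower bound $\DiscLLocNorm{z}{\pCell} \lesssim \LTwoNorm{z}{\pCell}$, the idea is a standard scaling-plus-compactness argument. First, map $\pCell$ affinely onto a reference cell $\widehat{Q}=[0,1]^\pDim$. On $\widehat{Q}$, the space spanned by the (at most $(\deg{+}1)^\pDim$) B-splines whose support overlaps $\widehat{Q}$ is finite-dimensional, and the coefficients $c_i\sMP$ are uniquely determined by the restriction of $z$ to $\pCell$ because of the local linear independence of B-splines on a single non-empty knot span. Hence on the reference configuration the two norms $\max_i|c_i|$ and $\|z\|_{L^2(\widehat Q)}$ are equivalent, with equivalence constants depending only on $\deg$ and on a shape constant coming from the quasi-uniformity of the neighbouring knot spans. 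Scaling back to $\pCell$ introduces precisely one factor $h_\pCell^{\pDim/2}$ on each side, and the equivalence is preserved.

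The main obstacle is the lower bound and, in particular, making the scaling argument uniform in $h$: one has to ensure that the ``reference'' configuration depends only on the local relative sizes of adjacent knot spans, not on the absolute scale, and that the finite-dimensional equivalence constant does not degenerate. This is exactly where the quasi-uniformity assumption on $\pMesh$ (Assumption invoked earlier) enters, giving a uniform shape constant. Everything else is a routine application of the listed B-spline properties, and the result is essentially a local version of de~Boor--type stability of the B-spline basis as used in \cite{veiga2006IGA_aopproximation} and \cite{beirao2013IGA_bddc}.
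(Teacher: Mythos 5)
The paper does not actually prove this lemma; it simply writes ``See \cite{beirao2013IGA_bddc}'', so there is no in-paper argument to compare against. Your proof is the standard one (and is essentially what the cited reference, going back to de~Boor's stability of the B-spline basis, does): the upper bound follows immediately from non-negativity and partition of unity together with $|\pCell|\approx h_{\pCell}^{\pDim}$ (read as $h_{\pCell}^{2}$ in the paper's two-dimensional convention), and the lower bound follows from local linear independence of the $(\deg+1)^{\pDim}$ B-splines active on a knot span, finite-dimensional norm equivalence on a reference configuration, and an $h_{\pCell}^{\pDim/2}$ scaling. Both halves are correct; the only point worth stating explicitly is the one you already flag, namely that the equivalence constant in the lower bound depends on the local relative knot spacings (and on $\deg$ and the knot multiplicities), which is kept uniform by the quasi-uniformity assumption. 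So your write-up is a valid proof that fills the gap the paper leaves to the literature.
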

 \begin{proof}
  See \cite{beirao2013IGA_bddc}.
 \hfill$\square$\end{proof}
  From \lemref{lem:EquDiscL2Loc} and \corref{cor:L2H1GeoMapRelation}, we obtain
  \begin{corollary}
  \label{cor:EquDiscL2}
   Let $\pCell\in\pMesh$, $\gCell = \geoMap\sMP(\pCell)$, $\g{z}\in \gVh\sMP$ and $z$ its counterpart in the parameter domain. Then we have
   \begin{align}
   \label{equ:equv_disc_L2}
   \LTwoNorm{\g{z}}{\gCell}\approx \DiscLLocNorm{\g{z}}{\gCell}^2 .
   \end{align}
  \end{corollary}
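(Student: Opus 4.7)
The plan is to obtain the claimed equivalence as a composition of three local equivalences, two of which are already proved in the excerpt and one of which is the direct scaling identity stated in the remark.

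First, I would move from the physical cell $\gCell$ to the parameter cell $\pCell$ using \corref{cor:L2H1GeoMapRelation}: the two inequalities there give
\begin{equation*}
\LInfNorm{\det\nabla\geoMap}{\pCell}^{-1/2}\LTwoNorm{\g{z}}{\gCell}\;\lesssim\;\LTwoNorm{z}{\pCell}\;\lesssim\;\LInfNorm{\det\nabla\geoMap^{-1}}{\gCell}^{1/2}\LTwoNorm{\g{z}}{\gCell},
\end{equation*}
so $\LTwoNorm{\g{z}}{\gCell}\approx\LTwoNorm{z}{\pCell}$ with constants depending only on $\LInfNorm{\det\nabla\geoMap\sMP}{\pDom}$ and $\LInfNorm{\det\nabla(\geoMap\sMP)^{-1}}{\gDom\sMP}$. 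These are finite because $\geoMap\sMP$ is a fixed, regular mapping with $\geoMap\sMP\in W^{1,\infty}(\pDom)$, and, crucially, they are independent of $h$, since the geometrical mapping does not change under mesh refinement.

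Next, I would invoke \lemref{lem:EquDiscL2Loc} on the parameter side, which yields $\LTwoNorm{z}{\pCell}\approx\DiscLLocNorm{z}{\pCell}$. Finally, the remark immediately preceding the corollary already records the scaling identity
\begin{equation*}
\DiscLLocNorm{z}{\pCell}^2=\LInfNorm{\nabla\geoMap}{\pCell}^{-2}\DiscLLocNorm{\g{z}}{\gCell}^2,
\end{equation*}
so $\DiscLLocNorm{z}{\pCell}\approx\DiscLLocNorm{\g{z}}{\gCell}$ with constant $\LInfNorm{\nabla\geoMap\sMP}{\pDom}$, again bounded independently of $h$. Concatenating the three equivalences gives the claim, where the understood reading is $\LTwoNorm{\g{z}}{\gCell}\approx\DiscLLocNorm{\g{z}}{\gCell}$ (the square on the right-hand side of \equref{equ:equv_disc_L2} being a typographical slip).

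There is essentially no analytical obstacle here: each of the three building blocks is already at our disposal, and the role of the proof is just to bookkeep where the constants come from. The only care needed is to verify that every constant picked up in the chain depends solely on the shape and parametrization of the patch $\gDom\sMP$ (through $\LInfNorm{\nabla\geoMap\sMP}{\pDom}$, $\LInfNorm{(\nabla\geoMap\sMP)^{-1}}{\gDom\sMP}$ and the associated determinant bounds) and not on the mesh size $h$, which is immediate from the assumption that $\geoMap\sMP$ is fixed on a coarse discretization and $\geoMap\sMP\in W^{1,\infty}(\pDom)$.
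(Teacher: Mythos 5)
Your proposal is correct and follows essentially the same route as the paper: the paper's proof likewise chains the $L^2$ pullback inequalities of \corref{cor:L2H1GeoMapRelation} with the parameter-domain equivalence of \lemref{lem:EquDiscL2Loc}. If anything, your argument is slightly more complete, since you explicitly invoke the remark's identity $\DiscLLocNorm{z}{\pCell}^2=\LInfNorm{\nabla\geoMap}{\pCell}^{-2}\DiscLLocNorm{\g{z}}{\gCell}^2$ to land on the physical-domain discrete norm appearing in the statement (and you correctly flag the stray square in \equref{equ:equv_disc_L2} as a typographical slip), whereas the paper's displayed chain stops at $\DiscLLocNorm{z}{\pCell}$ and leaves that last conversion implicit.
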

   \begin{proof}
   Indeed, it is easy to see that
   \begin{align*}
    \LTwoNorm{\g{z}}{\gCell}^2 &\leq \LInfNorm{\det\nabla\geoMap}{\pCell} \LTwoNorm{z}{\pCell}^2 \approx \LInfNorm{\det\nabla\geoMap}{\pCell}\DiscLLocNorm{z\sMP}{\pCell}^2,
    \end{align*}
    and
    \begin{align*}
    \DiscLLocNorm{z\sMP}{\pCell}^2 &\approx \LTwoNorm{z}{\pCell}^2 \leq \LInfNorm{\det\nabla\geoMap^{-1}}{\gCell} \LTwoNorm{\g{z}}{\gCell}^2,
    \end{align*}
    which yield \equref{equ:equv_disc_L2}.
   \hfill$\square$\end{proof}
  It immediately follows that the norm in the physical space $\LTwoNorm{\g{z}}{\gCell}$ is also equivalent to $\DiscLLocNorm{z}{\pCell}^2$, since
  $\LTwoNorm{\g{z}}{\gCell} \approx \DiscLLocNorm{\g{z}}{\gCell}^2 \approx \DiscLLocNorm{z}{\pCell}^2$. Now we define a global patchwise norm, which will be again equivalent to the $L^2$-norm in the parameter domain.
   \begin{definition}
      Let $\g{z}\in \gVh\sMP$ and $z$ its counterpart in the parameter domain. The global discrete norm is defined by
      \begin{align}
       \DiscLNorm{z}^2:= \sum_{i\in\iSet\sMP}|c_i\sMP|^2 h_{\pCell}^2.
      \end{align}
   \end{definition}
  \begin{lemma}
  Let $\g{z}\in \gVh\sMP$ and $z$ its counterpart in the parameter domain. Then we have the norm equivalence
  \begin{align}
    \DiscLNorm{z}^2 \approx \LTwoNorm{z}{\pDom}^2.
  \end{align}
  \end{lemma}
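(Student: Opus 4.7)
The plan is to upgrade the local equivalence $\DiscLLocNorm{z}{\pCell}^2 \approx \LTwoNorm{z}{\pCell}^2$ from \lemref{lem:EquDiscL2Loc} into the desired global equivalence by summing over all cells of $\pMesh\sMP$ and exploiting the finite overlap of B-Spline supports together with quasi-uniformity of the mesh.

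First I would observe that since $\pDom$ is the disjoint union (up to measure zero) of the cells $\pCell \in \pMesh\sMP$, we immediately have
\begin{equation*}
  \LTwoNorm{z}{\pDom}^2 \;=\; \sum_{\pCell \in \pMesh\sMP} \LTwoNorm{z}{\pCell}^2 \;\approx\; \sum_{\pCell \in \pMesh\sMP} \DiscLLocNorm{z}{\pCell}^2,
\end{equation*}
where the hidden constants are those of \lemref{lem:EquDiscL2Loc} and do not depend on $h$. Thus the task reduces to showing
\begin{equation*}
  \sum_{\pCell \in \pMesh\sMP} \DiscLLocNorm{z}{\pCell}^2 \;\approx\; \DiscLNorm{z}^2 \;=\; \sum_{i\in\iSet\sMP}|c_i\sMP|^2 h_{\pCell}^2 .
\end{equation*}

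For this last equivalence I would use the standard finite-overlap argument. Recall that only $(\deg+1)^{\pDim}$ basis functions are nonzero on any given knot span and, conversely, each basis function $\BSpl\sMP$ has support in at most $(\deg+1)^{\pDim}$ cells of $\pMesh\sMP$. Writing $N_{\deg} := (\deg+1)^{\pDim}$ and using quasi-uniformity so that $h_{\pCell} \approx h$ for every $\pCell$, one has on the one hand
\begin{equation*}
  \sum_{\pCell} \DiscLLocNorm{z}{\pCell}^2 \;=\; \sum_{\pCell} \max_{i\in\iSet\sMP(\pCell)}|c_i\sMP|^2\, h_{\pCell}^2 \;\leq\; \sum_{\pCell}\sum_{i\in\iSet\sMP(\pCell)} |c_i\sMP|^2\, h_{\pCell}^2 \;\leq\; N_\deg \, \DiscLNorm{z}^2,
\end{equation*}
and on the other hand, using $|c_i\sMP|^2 \leq \max_{j\in\iSet\sMP(\pCell)}|c_j\sMP|^2$ whenever $i\in\iSet\sMP(\pCell)$,
\begin{equation*}
  \DiscLNorm{z}^2 \;=\; \sum_{i\in\iSet\sMP} |c_i\sMP|^2 h_{\pCell}^2 \;\leq\; \sum_{i}\frac{1}{1}\!\!\sum_{\pCell:\,i\in\iSet\sMP(\pCell)}\!\! |c_i\sMP|^2 h_{\pCell}^2 \;\leq\; N_\deg \sum_{\pCell} \DiscLLocNorm{z}{\pCell}^2,
\end{equation*}
where quasi-uniformity is again used to absorb the ratios between different $h_\pCell$ into the generic constants. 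Combining the two chains gives the claim.

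The only slightly delicate point is the bookkeeping on the right-hand side: the factor $h_\pCell^2$ in the definition of $\DiscLNorm{z}^2$ is the local cell diameter attached to index $i$, which is only well-defined up to the finite-overlap set $\{\pCell : i \in \iSet\sMP(\pCell)\}$, so one must invoke quasi-uniformity to justify that any such choice produces the same quantity up to constants. Beyond this, the argument is purely combinatorial and uses no regularity of $z$ or of the geometrical mapping $\geoMap\sMP$, so the constants depend only on $\deg$, $\pDim$, and the quasi-uniformity parameter $\theta$.
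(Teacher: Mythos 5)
Your argument is correct and is essentially the standard proof: the paper itself gives no argument here and defers to \cite{beirao2013IGA_bddc}, where the same route is taken --- cellwise splitting of the $L^2$ norm, the local equivalence of \lemref{lem:EquDiscL2Loc}, and the finite-overlap bound $|\iSet\sMP(\pCell)|\le(\deg+1)^{\pDim}$ combined with quasi-uniformity of the knot spans. The only point worth stating explicitly in your lower-bound chain is that every index $i\in\iSet\sMP$ admits at least one non-empty cell $\pCell$ with $i\in\iSet\sMP(\pCell)$ (true for open knot vectors, since each B-spline has non-empty support containing an interior knot span), so the inner sum over $\{\pCell:\,i\in\iSet\sMP(\pCell)\}$ that you insert is indeed nonvoid.
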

   \begin{proof}
  See \cite{beirao2013IGA_bddc}.
 \hfill$\square$\end{proof}
 Again, it follows that the norm is also equivalent to the corresponding one in the physical space.
 \begin{corollary}
    Let $\g{z}\in \gVh\sMP$ and $z$ its counterpart in the parameter domain. Then we have the norm equivalence
   \begin{align*}
   \LTwoNorm{\g{z}}{\gDom\sMP}\approx \DiscLNorm{z}^2 .
   \end{align*}
  \end{corollary}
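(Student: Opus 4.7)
The plan is to chain together the two equivalences already at hand: the previous lemma gives $\DiscLNorm{z}^2 \approx \LTwoNorm{z}{\pDom}^2$ in the parameter domain, and \corref{cor:L2H1GeoMapRelation} converts between $L^2$ norms in the parameter and physical domains on a single cell. It only remains to lift the cellwise equivalence to a patchwise equivalence by summation.

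First I would invoke the preceding lemma on the patch $\pDom$ (viewed as the parameter domain associated with $\gDom\sMP$ via $\geoMap\sMP$), yielding constants independent of $h$ such that $\DiscLNorm{z}^2 \approx \LTwoNorm{z}{\pDom}^2$. Next I would apply \corref{cor:L2H1GeoMapRelation} to each cell $\pCell \in \pMesh\sMP$ with image $\gCell = \geoMap\sMP(\pCell)$, giving
\begin{align*}
\LTwoNorm{\g{z}}{\gCell}^2 &\leq C_{shape}^2 \LInfNorm{\det \nabla \geoMap\sMP}{\pCell}\, \LTwoNorm{z}{\pCell}^2, \\
\LTwoNorm{z}{\pCell}^2 &\leq C_{shape}^2 \LInfNorm{\det \nabla (\geoMap\sMP)^{-1}}{\gCell}\, \LTwoNorm{\g{z}}{\gCell}^2.
\end{align*}
Summing over all cells in $\pMesh\sMP$ and using that $\geoMap\sMP \in W^{1,\infty}(\pDom)$ is regular and independent of $h$ (so that both $\LInfNorm{\det \nabla \geoMap\sMP}{\pDom}$ and $\LInfNorm{\det \nabla (\geoMap\sMP)^{-1}}{\gDom\sMP}$ are uniformly bounded), I obtain the global equivalence
\begin{equation*}
\LTwoNorm{\g{z}}{\gDom\sMP}^2 \approx \LTwoNorm{z}{\pDom}^2.
\end{equation*}
Concatenating the two equivalences then yields $\LTwoNorm{\g{z}}{\gDom\sMP}^2 \approx \DiscLNorm{z}^2$, which is the claim (reading the statement with the obvious square on the left-hand side).

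There is no real obstacle here; the argument is purely a bookkeeping step, and the only point that deserves attention is checking that the constants arising from the determinant of $\nabla \geoMap\sMP$ are $h$-independent. This follows because the geometrical mapping is fixed at the coarse level and stays the same under refinement, as already emphasised in the general remarks preceding this corollary.
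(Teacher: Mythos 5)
Your proposal is correct and follows essentially the same route as the paper: the paper's proof simply states that it is identical to that of the earlier local corollary, i.e.\ chaining the discrete--continuous $L^2$ equivalence in the parameter domain with the cellwise push-forward estimates of \corref{cor:L2H1GeoMapRelation}, exactly as you do. Your added remarks about summing over cells, the $h$-independence of the constants involving $\det\nabla\geoMap\sMP$, and the implicit square on the left-hand side of the stated equivalence are all consistent with the paper's intent.
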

   \begin{proof}
  The proof is identical to that of \corref{cor:EquDiscL2} with the same constants.
   \hfill$\square$\end{proof}
   The next step is to define a discrete $H^1$ seminorm based on the coefficients $c_i$. We denote by $c_{i,i^\iVar-j}$ the coefficient corresponding to basis function $\BSplG{(i^1,\ldots,i^\iVar-j,\ldots,i^d)}{\deg}$, see \equref{def:TBSpline}. The derivative with respect to $\kn^\iVar$ of a B-Spline $z$ can be written as
   \begin{align}
    \dd{z}{\kn^\iVar} = \sum_{i\in\iSet_\iVar}\left(\frac{c_{i,i^\iVar}-c_{i,i^\iVar-1}}{\Delta_i^\iVar}\right)\BSplG{i}{(\deg,\deg^\iVar-1)},
   \end{align}
   where $\iSet_\iVar$ is the set of admissible indices such that each summand is well defined, $\Delta_i^\iVar = \kn[i+\deg]^\iVar - \kn[i]^\iVar$, and  $\BSplG{i}{(\deg,\deg^\iVar-1)}$ is a tensor B-Spline of degree $\deg$, where its degree in dimension $\iVar$ is reduced by one.

   With this definition at hand, we are able to define a discrete seminorm $\DiscLLocNorm{\cdot}{\kn^\iVar,\pCell}$ and $\DiscLLocNorm{\cdot}{\kn^\iVar}$ in the parameter domain  as follows:
   \begin{definition}
    Let $\pCell\in\pMesh$, $\gCell = \geoMap\sMP(\pCell)$, $\g{z}\in \gVh\sMP$, and $z$ its counterpart in the parameter domain. Then we define the local discrete seminorm by
 \begin{align}
    \DiscLLocNorm{z}{\kn^\iVar,\pCell}^2:= \max_{i\in\iSet_\iVar\sMP(\pCell)}|c_{i,i^\iVar}\sMP -c_{i,i^\iVar-1}\sMP|^2,
   \end{align}
  where $\iSet_\iVar\sMP(\pCell):=\{i\in \iSet\sMP(\pCell)\;|\;\pCell \subseteq \supp\{\BSplG{i}{(\deg,\deg^\iVar-1)}\}$.
   The global patchwise counterpart is then defined by
   \begin{align}
    \DiscLLocNorm{z}{\kn^\iVar}^2:= \sum_{i\in\iSet_\iVar\sMP}|c_{i,i^\iVar}\sMP -c_{i,i^\iVar-1}\sMP|^2.
   \end{align}
   \end{definition}
   This seminorms are again equivalent to the standard Sobolev seminorms.
   \begin{lemma}
   \label{lem:EquDiscH1Loc}
    Let $\pCell\in\pMesh$, $\gCell = \geoMap\sMP(\pCell)$, $\g{z}\in \gVh\sMP$ and $z$ its counterpart in the parameter domain. Then we have the equivalence
    \begin{align}
     \DiscLLocNorm{z}{\kn^\iVar,\pCell}^2 \approx \LTwoNorm{\dd{z}{\kn^\iVar}}{\pCell}^2.
    \end{align}
   \end{lemma}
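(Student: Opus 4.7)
The plan is to exploit the identity for the partial derivative of a tensor B-Spline, displayed in the paragraph just before the lemma, which expresses $\dd{z}{\kn^\iVar}$ as itself a tensor B-Spline expansion whose degree in direction $\iVar$ has been reduced by one, and whose coefficients are the scaled differences $(c_{i,i^\iVar}\sMP - c_{i,i^\iVar-1}\sMP)/\Delta_i^\iVar$. In particular, $\dd{z}{\kn^\iVar}$ lies in a tensor B-Spline space of the same structural type as $\pVh\sMP$, just with one lowered polynomial degree, so that the reasoning underlying \lemref{lem:EquDiscL2Loc} transfers verbatim.

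First I would apply the analogue of \lemref{lem:EquDiscL2Loc} to $\dd{z}{\kn^\iVar}$ viewed as a B-Spline function on $\pCell$. This yields
\begin{align*}
\LTwoNorm{\dd{z}{\kn^\iVar}}{\pCell}^2 \;\approx\; h_{\pCell}^2\,\max_{i\in\iSet_\iVar\sMP(\pCell)}\left|\frac{c_{i,i^\iVar}\sMP - c_{i,i^\iVar-1}\sMP}{\Delta_i^\iVar}\right|^2,
\end{align*}
with equivalence constants depending only on the shape regularity and on $\deg$. Next I would invoke the quasi-uniformity assumption on $\pMesh$: since $\Delta_i^\iVar = \kn[i+\deg]^\iVar - \kn[i]^\iVar$ is a sum of $\deg$ consecutive knot spans, and $\deg$ is fixed, one has $\Delta_i^\iVar \approx h_{\pCell}$ uniformly in $i\in\iSet_\iVar\sMP(\pCell)$. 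Substituting this into the previous line, the factors $h_{\pCell}^2$ and $1/(\Delta_i^\iVar)^2$ cancel, leaving
\begin{align*}
\LTwoNorm{\dd{z}{\kn^\iVar}}{\pCell}^2 \;\approx\; \max_{i\in\iSet_\iVar\sMP(\pCell)}\bigl|c_{i,i^\iVar}\sMP - c_{i,i^\iVar-1}\sMP\bigr|^2 \;=\; \DiscLLocNorm{z}{\kn^\iVar,\pCell}^2,
\end{align*}
which is exactly the claimed equivalence.

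The main obstacle, which is nevertheless mild, is to justify that \lemref{lem:EquDiscL2Loc} can be invoked for the mixed-degree tensor B-Spline space containing the derivative, rather than only for $\pVh\sMP$ itself. Inspecting the proof of \lemref{lem:EquDiscL2Loc} in \cite{beirao2013IGA_bddc}, the only ingredients used are the partition of unity, the local support property, and the fact that at most a fixed (degree-dependent) number of basis functions are nonzero on $\pCell$; all of these continue to hold after lowering the degree in one direction. Once this transfer is observed, the remaining steps are just the arithmetic above combined with the quasi-uniformity of the mesh, so no essentially new estimate is required beyond what is already available earlier in the paper.
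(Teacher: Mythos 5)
Your argument is correct. The paper itself does not supply a proof of this lemma — it simply defers to \cite{beirao2013IGA_bddc} — and the argument you give is precisely the standard one used there: write $\dd{z}{\kn^\iVar}$ as a B-Spline of reduced degree in direction $\iVar$ with coefficients $(c_{i,i^\iVar}-c_{i,i^\iVar-1})/\Delta_i^\iVar$, apply the local discrete $L^2$ equivalence of \lemref{lem:EquDiscL2Loc} to that spline, and cancel $h_{\pCell}^2$ against $(\Delta_i^\iVar)^{-2}$ via quasi-uniformity. The only point worth stating slightly more carefully is that the lower bound in \lemref{lem:EquDiscL2Loc} rests on the $L^\infty$-stability of the B-Spline basis (existence of bounded dual functionals), not merely on partition of unity and local support; but this stability holds equally for the reduced-degree tensor basis, so your transfer of the lemma is justified.
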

   \begin{proof}
   See \cite{beirao2013IGA_bddc}.
  \hfill$\square$\end{proof}
%
%
   We are now in the position to define the ``full'' discrete seminorm
   \begin{definition}
    Let $\g{z}\in \gVh\sMP$, and $z$ its counterpart in the parameter domain. Then we define the ``full'' discrete seminorm by the formula
    \begin{align}
     \DiscHNorm{z}^2:=\sum_{\iVar=1}^\pDim  \DiscLLocNorm{z}{\kn^\iVar}^2.
    \end{align}
   \end{definition}
   \lemref{lem:EquDiscH1Loc} and \corref{cor:L2H1GeoMapRelation} immediately yield the following proposition.
   \begin{proposition}
   \label{lem:EquDiscH1}
    Let $\g{z}\in \gVh\sMP$ and $z$ its counterpart in the parameter domain. Then the following equivalences hold:
    \begin{align}
     \DiscHNorm{z}^2 \approx \HOneSNorm{z}{\pDom}^2 \approx \HOneSNorm{\g{z}}{\gDom\sMP}^2.
    \end{align}
   \end{proposition}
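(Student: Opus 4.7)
The plan is to chain two equivalences: first a purely parametric one $\DiscHNorm{z}^2 \approx \HOneSNorm{z}{\pDom}^2$, then a pull-back/push-forward equivalence $\HOneSNorm{z}{\pDom}^2 \approx \HOneSNorm{\g{z}}{\gDom\sMP}^2$ that uses only the fixed regular mapping $\geoMap\sMP$.

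For the first equivalence, I would work one coordinate direction $\iVar$ at a time. The local lemma \lemref{lem:EquDiscH1Loc} already gives $\DiscLLocNorm{z}{\kn^\iVar,\pCell}^2 \approx \LTwoNorm{\dd{z}{\kn^\iVar}}{\pCell}^2$ on every cell $\pCell\in\pMesh\sMP$. Summing over cells in the parameter domain immediately yields $\sum_{\pCell}\DiscLLocNorm{z}{\kn^\iVar,\pCell}^2 \approx \LTwoNorm{\dd{z}{\kn^\iVar}}{\pDom}^2$. The step that then has to be justified is comparing the cell-wise \emph{maxima} of coefficient differences with the global \emph{sum} $\DiscLLocNorm{z}{\kn^\iVar}^2=\sum_{i\in\iSet_\iVar\sMP}|c_{i,i^\iVar}\sMP-c_{i,i^\iVar-1}\sMP|^2$. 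Because each index $i\in\iSet_\iVar\sMP$ belongs to $\iSet_\iVar\sMP(\pCell)$ for at most a bounded number of cells (depending only on $\deg$) and, conversely, each cell sees only $O((\deg+1)^\pDim)$ indices, the standard local-overlap argument gives $\sum_{\pCell}\DiscLLocNorm{z}{\kn^\iVar,\pCell}^2\approx\DiscLLocNorm{z}{\kn^\iVar}^2$ with constants depending only on $\deg$. Summing the resulting $\DiscLLocNorm{z}{\kn^\iVar}^2\approx\LTwoNorm{\dd{z}{\kn^\iVar}}{\pDom}^2$ over $\iVar=1,\dots,\pDim$ and using the definition $\DiscHNorm{z}^2=\sum_\iVar\DiscLLocNorm{z}{\kn^\iVar}^2$ and $\HOneSNorm{z}{\pDom}^2=\sum_\iVar\LTwoNorm{\dd{z}{\kn^\iVar}}{\pDom}^2$ closes the first equivalence.

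For the second equivalence, I would apply \corref{cor:L2H1GeoMapRelation} cellwise to obtain $\HOneSNorm{z}{\pCell}\approx\HOneSNorm{\g{z}}{\gCell}$ with constants that depend only on $\geoMap\sMP$ (which, as noted just before the definition of the discrete norms, is $h$-independent and lies in $W^{1,\infty}(\pDom)$). Summing the squared cellwise inequalities over $\pCell\in\pMesh\sMP$ and $\gCell=\geoMap\sMP(\pCell)\in\gMesh\sMP$ yields the global patchwise equivalence $\HOneSNorm{z}{\pDom}^2\approx\HOneSNorm{\g{z}}{\gDom\sMP}^2$. Combining this with the first chain produces the asserted double equivalence.

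The main technical point that requires care is the overlap-counting bookkeeping in step one, i.e.\ the passage from cellwise max-seminorms to the global sum-seminorm $\DiscLLocNorm{z}{\kn^\iVar}^2$; the rest is just bundling together results already available in \lemref{lem:EquDiscH1Loc} and \corref{cor:L2H1GeoMapRelation}. No sharp constants need to be tracked because the statement is only up to $h$-independent equivalence, and the constants inherit the same dependence on $\deg$, the quasi-uniformity parameter $\theta$, and the parametrization $\geoMap\sMP$ as in the cited lemmas.
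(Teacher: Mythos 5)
Your proposal is correct and follows essentially the same route as the paper, which gives no separate proof but states that \lemref{lem:EquDiscH1Loc} and \corref{cor:L2H1GeoMapRelation} ``immediately yield'' the proposition; you have simply made explicit the cell-summation and bounded-overlap bookkeeping that the paper leaves implicit.
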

  The next step is to provide properties in the local index spaces. Since we consider only the two dimensional problem, we can interpret the control points $(c_i)_{i\in\iSet}$ as entries of a matrix $\mat{C}=(c_i)_{i=1}^{\nBasis_1,\nBasis_2}$. Before doing that we will provide abstract results for an arbitrary matrix.
  
  We define the seminorm
  \begin{align}
   \RealSNorm{\mat{C}}^2:= \sum_{\iVar=1}^2 \sum_{\substack{i=1 \\i^\iVar=2}}^{\nBasis_\iVar}|c_{i,i^\iVar}-c_{i,i^\iVar-1}|^2
  \end{align}
  for a real valued $\nBasis_1\times \nBasis_2$ matrix $\mat{C}=(c_i)_{i=1}^{\nBasis_1,\nBasis_2}$ $\in \real[{\nBasis_1\times \nBasis_2}]$. The entries of the matrix $\mat{C}$ can be interpreted as values on a uniform grid $\mathcal{T}$. This motivates the definition of an operator, which evaluates a continuous function on the grid points $(x_i) =(x_{i^1 i^2})$:
  \begin{align}
   \begin{split}
      (\cdot)_I:&\, C(\pDom[2]) \to \real[{\nBasis_1\times \nBasis_2}],\\
	      & f\mapsto f_I: (f_I)_{i} = f(x_{i}),
   \end{split}
  \end{align}
  and an operator, that provides a piecewise bilinear interpolation of the given grid values
    \begin{align}
   \begin{split}
      \chi:& \real[{\nBasis_1\times \nBasis_2}] \to \mathcal{Q}_1(\mathcal{T}) \subset H^1(\pDom[2]),\\
	      & \mat{C}\mapsto \chi(\mat{C}): \chi(\mat{C})(x_{i}) = c_{i},
   \end{split}
  \end{align}
  where $\mathcal{Q}_1(\mathcal{T})$ is the space of piecewise bilinear functions on $\mathcal{T}$.

  Furthermore, given values on an edge on $\pDom[2]$, we need to define its linear interpolation and a discrete harmonic extension to the interior. In order to do so, let $e$ be an edge on $\pDom[2]$, and let us denote  all indices of grid points $x_i$ associated to $e$ by $\iSet(e)$. Additionally, let $\mathcal{P}_1(\mathcal{T}_{|e})$ be the space of piecewise linear spline functions on $\mathcal{T}_{|e}$.
  
  We define the interpolation of values on $\iSet(e)$ by the restriction of the operator $\chi$ to $e$:
  \begin{align*}
  \chi_e &: \real[\nBasis_\iVar] \to H^1(e)\\
   &\vec{v} \mapsto \chi_e(\vec{v})\in \mathcal{P}_1(\mathcal{T}_{|e}): \chi_e(\vec{v})(x_{i}) = v_{i^\iVar}, \quad i\in\iSet(e), 1\leq i^\iVar\leq \nBasis_\iVar,
  \end{align*}
  where $e$ corresponds to dimension $\iVar$, $\iVar\in\{1,2\}$.
  
  This leads to a definition of a seminorm for grid points on an edge $e$ via the interpolation to functions from  $\mathcal{P}_1(\mathcal{T}_{|e})$:
  \begin{definition}
   Let $e$ be an edge of $\pDom[2]$ along dimension $\iVar$ and $\vec{v}$ be a vector in $\real[\nBasis_\iVar]$. Then we define the following seminorm
   \begin{align}
     \RealTNorm{\vec{v}} := \left|\chi_e(\vec{v})\right|_{H^{1/2}(e)}, \quad \forall \vec{v}\in\real[\nBasis_\iVar].
   \end{align}
  \end{definition}
  It remains to define the interpolation operator for the whole boundary, which will be the interpolation of all four edges.
  \begin{definition}
   Let
  \begin{align}
   \iSet(\partial): = \{i: x_i \in \partial\pDom[2]\}.
  \end{align}
  be all indices $i$ such that $x_i$ lies on the boundary $\partial\pDom[2]$. For $\vec{b}\in \real[|\iSet(\partial)|]$, let $\vec{b}_{|e}$ be all the values which are associated with $e$. The interpolation operator $\chi_\partial$ is then defined by
  \begin{align}
  \begin{split}
   \chi_\partial&: \real[|\iSet(\partial)|] \to H^1(\partial\pDom[2])\\
	  &\vec{b} \mapsto \chi_\partial(\vec{b}): \chi_\partial(\vec{b})_{|e} = \chi_e(\vec{b}_{|e}).
  \end{split}
  \end{align}
  \end{definition}
  \begin{remark}
   For the interpolation operator $\chi_e$ defined on an edge $e$, the equations 
     $\chi(\mat{C})_{|e} = \chi_e(\mat{C}_{|e})$ and 
     $\RealTNorm{\mat{C}_{|e}}= \HSNorm{1/2}{\chi_e(\mat{C}_{|e})}{e}=\HSNorm{1/2}{\chi(\mat{C})_{|e}}{e}$ are obviously valid.
  \end{remark}
  Finally, we are able to define the discrete harmonic extension in $\real[\nBasis_1\times\nBasis_2]$.
  \begin{definition}
   Let $\DHEreal$ be the standard discrete harmonic extension into the piecewise bilinear space $\mathcal{Q}_1$. This defines the lifting operator $\mat{H}$ by
   \begin{align}
   \begin{split}
    \mat{H} &: \real[|\iSet(\partial)|] \to \real[\nBasis_1\times\nBasis_2]\\
             &\vec{b}\mapsto \mat{H}(\vec{b}):= (\DHEreal[\chi_\partial(\vec{b})])_I.
   \end{split}
   \end{align}
  \end{definition}
  \begin{theorem}
  \label{thm:realMatProp}
   Let $e$ be a particular side on the boundary of $\pDom[2]$ and the constant $\beta\in\real[+]$ such that
   \begin{align*}
    \beta^{-1}\nBasis_2 \leq \nBasis_1\leq \beta\nBasis_2.
   \end{align*}
   Then the following hold:
   \begin{itemize}
    \item For all $\vec{b}\in \real[2\nBasis_1 + 2\nBasis_2 -1]$ that vanish on the four components corresponding to the four corners, it holds 
    \begin{align*}
     \RealSNorm{\mat{H}(\vec{b})}^2 \leq c (1+\log^2\nBasis_1) \sum_{e\in\partial\pDom[2]}\RealTNorm{\vec{b}_{|e}}^2,
    \end{align*}
    where the constant c depends only on $\beta$.
    \item It holds 
    \begin{align*}
      \RealSNorm{\mat{C}}\geq c \RealTNorm{\mat{C}_{|e}},\quad \forall \mat{C} \in\real[{\nBasis_1\times\nBasis_2}],
    \end{align*}
    where the constant c depends only on $\beta$.
   \end{itemize}
  \end{theorem}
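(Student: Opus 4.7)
The plan is to translate both inequalities into continuous statements about the piecewise bilinear interpolant $\chi(\mat{C})\in \mathcal{Q}_1(\mathcal{T})$ on the uniform tensor product grid, and then invoke classical $\mathcal{Q}_1$ finite element results for the standard discrete harmonic extension. The key observations are the equivalences
\begin{align*}
 \RealSNorm{\mat{C}}^2 \approx |\chi(\mat{C})|_{H^1(\pDom[2])}^2
 \quad\text{and}\quad
 \RealTNorm{\vec{v}}^2 = |\chi_e(\vec{v})|_{H^{1/2}(e)}^2,
\end{align*}
where the first follows by a direct cell-by-cell computation (on each cell $\int_K|\nabla\chi(\mat{C})|^2$ reduces, up to a constant depending only on the aspect ratio $\beta$, to the horizontal and vertical squared nodal differences that appear in $\RealSNorm{\mat{C}}^2$), and the second holds by definition of $\RealTNorm{\cdot}$. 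Moreover $\chi(\mat{H}(\vec{b})) = \DHEreal[\chi_\partial(\vec{b})]$, because $\DHEreal[\chi_\partial(\vec{b})]$ is itself a $\mathcal{Q}_1$ function and $\chi$ recovers a $\mathcal{Q}_1$ function uniquely from its nodal values.

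With these identifications, the first assertion reduces to the well-known polylogarithmic stability estimate for the $\mathcal{Q}_1$ discrete harmonic extension on a square with tensor product grid,
\begin{align*}
 |\DHEreal[\chi_\partial(\vec{b})]|_{H^1(\pDom[2])}^2 \leq c(1+\log^2\nBasis_1)\sum_{e\in\partial\pDom[2]}|\chi_e(\vec{b}_{|e})|_{H^{1/2}(e)}^2,
\end{align*}
which is a classical result of the finite element domain decomposition literature (see, e.g., Toselli and Widlund, or the Bramble--Pasciak--Schatz estimate). The hypothesis that $\vec{b}$ vanishes at the four corners is precisely what is needed to replace the natural $H^{1/2}_{00}(e)$-type edge norms by the ordinary $H^{1/2}(e)$ seminorms at the price of the $(1+\log^2\nBasis_1)$ factor; without it the corner values would couple the four edges and a stronger right-hand-side norm would be required.

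For the second assertion I would simply apply the continuous trace inequality on the unit square to $u=\chi(\mat{C})\in H^1(\pDom[2])$, obtaining
\begin{align*}
 \RealTNorm{\mat{C}_{|e}}
  = |\chi_e(\mat{C}_{|e})|_{H^{1/2}(e)}
  = |\chi(\mat{C})_{|e}|_{H^{1/2}(e)}
  \leq C\,|\chi(\mat{C})|_{H^1(\pDom[2])}
  \leq c\,\RealSNorm{\mat{C}},
\end{align*}
with constants depending only on $\beta$ via the first equivalence above.

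The main obstacle is clearly the first, polylogarithmic, bound. A self-contained derivation requires a discrete Sobolev embedding of the form $\|v\|_{L^\infty(\pDom[2])} \leq c(1+\log\nBasis_1)^{1/2}\bigl(|v|_{H^1(\pDom[2])} + \|v\|_{L^2(\pDom[2])}\bigr)$ for $\mathcal{Q}_1$ functions, together with a decomposition of the boundary data into four edge contributions with vanishing corner values and a minimum-energy argument for the harmonic extension. Since this is standard finite-element domain decomposition material and not genuinely isogeometric in nature, I would quote it from the literature rather than reprove it, and concentrate the presentation on verifying that the aspect-ratio hypothesis $\beta^{-1}\nBasis_2 \leq \nBasis_1 \leq \beta\nBasis_2$ enters only through the equivalence constants between $\RealSNorm{\cdot}$ and $|\chi(\cdot)|_{H^1(\pDom[2])}$.
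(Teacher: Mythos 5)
Your outline is correct and coincides with the argument behind the paper's proof, which is not reproduced in the text but simply deferred to the cited BDDC reference: there too $\RealSNorm{\mat{C}}$ and $\RealTNorm{\cdot}$ are identified with the $H^1$ and $H^{1/2}$ seminorms of the $\mathcal{Q}_1$ interpolants (with equivalence constants controlled only by the aspect ratio $\beta$), $\chi(\mat{H}(\vec{b}))$ is recognized as the $\mathcal{Q}_1$ discrete harmonic extension of $\chi_\partial(\vec{b})$, and the two assertions then follow from the classical polylogarithmic extension estimate for boundary data vanishing at the corners and from the trace inequality. The one step worth making explicit is that the second assertion requires the seminorm-to-seminorm form of the trace inequality, which you obtain by subtracting the mean of $\chi(\mat{C})$ and applying Poincar\'e--Wirtinger, since both $\RealSNorm{\cdot}$ and $\RealTNorm{\cdot}$ are invariant under adding constants.
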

  \begin{proof}
   See \cite{beirao2013IGA_bddc}.
  \hfill$\square$\end{proof}
%
\subsubsection{Condition number estimate}
The goal of this section to establish a condition number bound for $P = M_{BDDC}^{-1}\hat{\Sop}$. 
Following \cite{beirao2013IGA_bddc}, we assume  that the mesh is quasi-uniform on each subdomain and the diffusion coefficient
is globally constant.
We focus now on a single patch $\gDom\sMP$, $\iMP\in\{1,\ldots,\nMP\}$.  
For notational simplicity, we assume that the considered patch $\gDom\sMP$ does not touch the boundary $\partial\gDom$. 

We define the four edges of the parameter domain $(0,1)^2$ by $E_r$, and their images by $\g{E}_r = \geoMap\sMP(E_r)$, $r=1,2,3,4$.
In order to represent functions $\g{w}\sMP\in\LocVB[\iMP]$ as the corresponding vectors, we need the coefficients corresponding to the basis functions on the four edges,  namely
\begin{align}
\begin{split}
 \iSet(\g{E}_1\sMP) &= \{i\;|\;i^1 = 1,\, i^2 = 1,2,\ldots\nBasis_2\sMP\},\\
 \iSet(\g{E}_2\sMP) &= \{i\;|\;i^1 = \nBasis_1\sMP,\, i^2 = 1,2,\ldots\nBasis_2\sMP\},\\
 \iSet(\g{E}_3\sMP) &= \{i\;|\;i^1 = 1,2,\ldots\nBasis_1\sMP, \, i^2 = 1\},\\
 \iSet(\g{E}_4\sMP) &= \{i\;|\;i^1 = 1,2,\ldots\nBasis_1\sMP, \, i^2 = \nBasis_2\sMP\},\\
 \iSet(\gIntMP\sMP) &= \bigcup_{1\leq r\leq 4} \iSet(\g{E}_r\sMP).
 \end{split}
\end{align}
Let $\g{z}\sMP\in\gVh\sMP$, i.e., $\g{z}\sMP  = \sum_{i\in\iSet\sMP}c_i^z\gBSpl\sMP.$
%
Hence, $\g{z}\sMP$ is determined by its coefficients $c_i^z = (c^z_{i^1,i^2})_{i^1,i^2 = 1}^{\nBasis_1\sMP,\nBasis_2\sMP}$, which can be interpreted as a $\nBasis_1\sMP\times\nBasis_2\sMP$ matrix $\mat{C}^z$, i.e.,
\begin{align*}
 \gVh\sMP\ni\g{z}\sMP = \sum_{i\in\iSet\sMP}c_i^z\gBSpl\sMP \Longleftrightarrow \{c_i^z\}_{i\in\iSet\sMP} \Longleftrightarrow \mat{C}^z = (c^z_{i^1,i^2})\in\real[{\nBasis_1\sMP\times\nBasis_2\sMP}].
\end{align*}
Moreover, we can identify functions on the trace space $\LocVB[\iMP]$ as follows:
\begin{align*}
 \LocVB[\iMP]\ni\g{w}\sMP &= \sum_{i\in\iSet(\gIntMP\sMP)}c_i^w\gBSpl\sMP \Longleftrightarrow \{c_i^w\}_{i\in\iSet\sMP(\gIntMP\sMP)},\\
  \LocVB[\iMP]_{|E_r\sMP}\ni\g{w}\sMP &= \sum_{i\in\iSet(\g{E}_r\sMP)}c_i^w\gBSpl\sMP \Longleftrightarrow \{c_i^w\}_{i\in\iSet(\g{E}_r\sMP)} \quad \text{for }r=1,2,3,4.
\end{align*}
Similar to the definition of the global discrete NURBS harmonic extension $\DHE$ in \equref{def:DHE}, we define the local patch  version 
\begin{align}
 \DHE\sMP : \LocVB[\iMP]\to \gVh\sMP
\end{align}
such that, for any $\g{w}\sMP\in \LocVB[\iMP]$, $\g{z}\sMP:= \DHE\sMP\left(\g{w}\sMP\right)$ is the unique function in $\gVh$ which minimizes the $H^1$ energy on $\gDom\sMP$ and
 $ c_i^z = c_i^w\quad \forall i\in\iSet(\gIntMP\sMP).$
%
%
%
Finally, let $\Wd[\iMP]\subset\LocVB[\iMP]$ be the space of spline functions which vanish on the primal variables, i.e., in the corner points. The following theorem provides an abstract estimate of the condition number using the coefficient scaling: 
\begin{theorem}
\label{thm:abstractBDDC}
 Let the counting function $\Scal\sMP$ be chosen accordingly to the coefficient scaling strategy. Assume that there exist two positive constants $c_*,c^*$ and a boundary seminorm $\DiscLLocNorm{\cdot}{\LocVB[\iMP]}$ on $\LocVB[\iMP]$, $\iMP=1,\ldots,\nMP$, such that
 \begin{align}
  \label{equ:abstractBDDC_1}
  \DiscLLocNorm{\g{w}\sMP}{\LocVB[\iMP]}^2 &\leq c^* s\sMP(\g{w}\sMP,\g{w}\sMP) \quad\forall \g{w}\sMP\in \LocVB[\iMP],\\
  \label{equ:abstractBDDC_2}
  \DiscLLocNorm{\g{w}\sMP}{\LocVB[\iMP]}^2 &\geq c_* s\sMP(\g{w}\sMP,\g{w}\sMP) \quad\forall \g{w}\sMP\in \Wd[\iMP],\\
  \label{equ:abstractBDDC_3}
  \DiscLLocNorm{\g{w}\sMP}{\LocVB[\iMP]}^2 &= \sum_{r=1}^4\DiscLLocNorm{\g{w}\sMP_{|E_r\sMP}}{\LocVB[\iMP]_r}\quad\forall \g{w}\sMP\in \LocVB[\iMP],
 \end{align}
 where $\DiscLLocNorm{\g{w}\sMP_{|E_r\sMP}}{\LocVB[\iMP]_r}$ is a seminorm associated to the edge spaces $\LocVB[\iMP]_{|E_r\sMP}$, with $r=1,2,3,4$. Then the condition number of the preconditioned BDDC operator P satisfies the bound
 \begin{align}
  \kappa(M_{BDDC}^{-1}\hat{\Sop})\leq C(1+c_*^{-1} c^*),
 \end{align}
where the constant $C$ is independent of $h$ and $H$.
\end{theorem}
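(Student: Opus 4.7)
The plan is to follow the now-standard abstract BDDC/FETI-DP framework (cf.\ \cite{ManDohrTez2005_AlgebTheoryFETIDP_BDDC}, \cite{pechstein2012FETI}, \cite{toselli2005DDM}), reducing the condition number estimate to a single key inequality for the averaging operator. The abstract theory tells us that the spectrum of $P=M_{BDDC}^{-1}\hat{\Sop}$ lies in $[1,\omega]$ provided one can show the stability bound
\begin{align}
\label{equ:keystab}
 \func{\tSop \widetilde{E}_D \tilde{w}}{\widetilde{E}_D \tilde{w}} \;\le\; \omega \, \func{\tSop\tilde{w}}{\tilde{w}} \qquad \forall \tilde{w}\in\tW ,
\end{align}
where $\widetilde{E}_D=I-\widetilde{P}_D$ with $\widetilde{P}_D:=\tilde{I}-\widetilde{E}_D$ lives in the dual complement. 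The lower eigenvalue bound $\lambda_{\min}\ge 1$ is automatic from the projection property of $\widetilde{E}_D$ on $\hW$, so the whole argument reduces to verifying \equref{equ:keystab} with $\omega=C(1+c_*^{-1}c^*)$.

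First I would write $\widetilde{E}_D\tilde w=\tilde w-\widetilde{P}_D\tilde w$ and, by the triangle inequality in the $\tSop$-seminorm, localise the estimate to a patchwise bound on $\widetilde{P}_D\tilde w$. The crucial observation is that $(\widetilde{P}_D\tilde w)\sMP\in\Wd[\iMP]$ because $\widetilde{P}_D$ annihilates the primal constraints; hence assumption \equref{equ:abstractBDDC_2} is applicable on each patch and gives
\begin{align*}
 s\sMP\!\bigl((\widetilde{P}_D\tilde w)\sMP,(\widetilde{P}_D\tilde w)\sMP\bigr) \;\le\; c_*^{-1}\,\DiscLLocNorm{(\widetilde{P}_D\tilde w)\sMP}{\LocVB[\iMP]}^{\,2}.
\end{align*}
Assumption \equref{equ:abstractBDDC_3} then splits the right-hand side into a sum of four edge seminorms $\DiscLLocNorm{(\widetilde{P}_D\tilde w)\sMP_{|E_r\sMP}}{\LocVB[\iMP]_r}^{\,2}$, so the task reduces to estimating scaled jumps of $\tilde w$ edge by edge.

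Second, on a shared edge $E_r\sMP$ between patches $\iMP=k$ and a neighbour $l$, the definition of the scaled jump operator $B_D$ and the coefficient scaling give a pointwise identity of the form $(\widetilde{P}_D\tilde w)\sMP[k]_{|E_r} = \Scal\sMP[l]\bigl(\tilde w\sMP[k]_{|E_r}-\tilde w\sMP[l]_{|E_r}\bigr)$. Using the two elementary identities $(\Scal\sMP[l])^2\le \Scal\sMP[l]$ and $\alpha\sMP[l]\Scal\sMP[l]=\alpha\sMP[k]\Scal\sMP[k]$ together with the triangle inequality, each squared scaled jump on $E_r$ is controlled by a convex combination of the two boundary seminorms $\DiscLLocNorm{\tilde w\sMP[m]_{|E_r}}{\LocVB[m]_r}^{\,2}$, $m\in\{k,l\}$, with weights proportional to $\min(\alpha\sMP[k],\alpha\sMP[l])/\alpha\sMP[m]$. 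Plugging this into \equref{equ:abstractBDDC_1} on the corresponding patch brings the contribution back into the $\alpha$-weighted energy $\alpha\sMP[m]\,s\sMP[m](\tilde w\sMP[m],\tilde w\sMP[m])$ with a factor at most $c^*$. Summing over all interface edges and over all patches, and noting that every edge is visited at most a fixed number of times, yields
\begin{align*}
 \func{\tSop\widetilde{P}_D\tilde w}{\widetilde{P}_D\tilde w} \;\le\; C\,c_*^{-1}c^*\,\func{\tSop\tilde w}{\tilde w},
\end{align*}
from which \equref{equ:keystab} and thus the theorem follow.

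The main technical obstacle is the scaling bookkeeping in the second step: one must carefully track how the contribution of a shared edge $E_r$ enters both $s\sMP[k]$ and $s\sMP[l]$, using the identity $\alpha\sMP[l]\Scal\sMP[l]=\alpha\sMP[k]\Scal\sMP[k]$ to transfer estimates between the two patches so that, after the convex-combination step, each edge is charged with a controlled multiple of the $s$-energy. The remainder is essentially an adaptation of the corresponding finite-element argument to the present IgA notation, and does not use any property of the discrete space beyond the hypotheses \equref{equ:abstractBDDC_1}--\equref{equ:abstractBDDC_3}; the verification of those three hypotheses with $c^*(1+c_*^{-1})=O((1+\log H/h)^2)$ is the content of the subsequent, more delicate sections.
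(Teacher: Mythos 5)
Your proposal is correct and follows exactly the standard abstract BDDC argument (reduction to the stability bound $\func{\tSop \widetilde{E}_D \tilde{w}}{\widetilde{E}_D \tilde{w}} \leq \omega \func{\tSop \tilde{w}}{\tilde{w}}$, localisation of $P_D\tilde{w}$ into the patchwise dual spaces, and the chain \equref{equ:abstractBDDC_2}$\to$\equref{equ:abstractBDDC_3}$\to$edge-jump estimates$\to$\equref{equ:abstractBDDC_1}), which is precisely the proof the paper defers to by citing \cite{beirao2013IGA_bddc} and \cite{beirao2010_BDDC_FEM_Plates}. No substantive difference in approach.
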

\begin{proof}
 See \cite{beirao2013IGA_bddc} or \cite{beirao2010_BDDC_FEM_Plates}.
\hfill$\square$\end{proof}
Using this abstract framework, we obtain  the following condition number estimate for the BDDC preconditioner.
\begin{theorem}
\label{thm:CondCoeffScal}
 There exists a boundary seminorm such that the constants $c_*$ and $c^*$ of \thmref{thm:abstractBDDC} are bounded by 
 \begin{align}
  c^* &\leq C_1,\\
  c_*^{-1} &\leq C_2 \max_{1\leq\iMP\leq\nMP}\left(1+\log^2\left(\frac{H\sMP}{h\sMP}\right)\right),
 \end{align}
 where the constants $C_1$ and $C_2$ are independent of $H$ and$ h$. Therefore, the condition number of the isogeometric preconditioned BDDC operator is bounded by
 %
 \begin{align}
  \kappa(M_{BDDC}^{-1}\hat{\Sop}) \leq C \max_{1\leq\iMP\leq\nMP}\left(1+\log^2\left(\frac{H\sMP}{h\sMP}\right)\right),
 \end{align}
 where the constant $C$ is independent of $H$ and $h$.
\end{theorem}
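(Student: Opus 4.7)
The plan is to follow the strategy of \cite{beirao2013IGA_bddc} applied patchwise, so that different geometrical mappings $\geoMap\sMP$ are handled independently and each constant depends on a single patch only through its shape constants. I choose the boundary seminorm
\begin{equation*}
\DiscLLocNorm{\g{w}\sMP}{\LocVB[\iMP]}^2 := \sum_{r=1}^4 \RealTNorm{\vec{b}^w_{|E_r}}^2 = \sum_{r=1}^4 \HSNorm{1/2}{\chi_{E_r}(\vec{b}^w_{|E_r})}{E_r}^2,
\end{equation*}
where $\vec{b}^w \in \real[|\iSet(\partial)|]$ collects the boundary coefficients of $\g{w}\sMP \in \LocVB[\iMP]$. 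By construction, \equref{equ:abstractBDDC_3} is immediate, so everything reduces to verifying the upper and lower bounds with the claimed constants.

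For the upper bound \equref{equ:abstractBDDC_1}, set $\g{z}\sMP := \DHE\sMP(\g{w}\sMP)$, so that $s\sMP(\g{w}\sMP,\g{w}\sMP) = \HOneSNorm{\g{z}\sMP}{\gDom\sMP}^2$, and let $\mat{C}^z$ be its full coefficient matrix. By \corref{cor:L2H1GeoMapRelation} together with \propref{lem:EquDiscH1},
\begin{equation*}
 s\sMP(\g{w}\sMP,\g{w}\sMP) \approx \HOneSNorm{z\sMP}{\pDom}^2 \approx \DiscHNorm{z\sMP}^2 \approx \RealSNorm{\mat{C}^z}^2,
\end{equation*}
with equivalence constants depending only on the shape of $\geoMap\sMP$. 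The second assertion of \thmref{thm:realMatProp}, applied on each of the four edges and using that $\mat{C}^z_{|E_r}$ coincides with $\vec{b}^w_{|E_r}$, gives $\sum_{r=1}^4 \RealTNorm{\vec{b}^w_{|E_r}}^2 \leq C \RealSNorm{\mat{C}^z}^2$, hence $c^* \leq C_1$ independently of $h$ and $H$.

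The lower bound \equref{equ:abstractBDDC_2} is the main obstacle and the step that produces the $(1+\log)^2$ factor. Given $\g{w}\sMP \in \Wd[\iMP]$, the corner entries of $\vec{b}^w$ vanish because the four corners are the primal variables. Let $\g{y}\sMP$ be the NURBS function whose full coefficient matrix equals the piecewise bilinear harmonic lifting $\mat{H}(\vec{b}^w)$. Because the discrete NURBS harmonic extension minimizes the $H^1$ energy among all NURBS functions sharing the same boundary coefficients,
\begin{equation*}
 s\sMP(\g{w}\sMP,\g{w}\sMP) = \HOneSNorm{\DHE\sMP(\g{w}\sMP)}{\gDom\sMP}^2 \leq \HOneSNorm{\g{y}\sMP}{\gDom\sMP}^2,
\end{equation*}
and the same chain of equivalences as above bounds the right-hand side by a constant times $\RealSNorm{\mat{H}(\vec{b}^w)}^2$. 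Applying the first assertion of \thmref{thm:realMatProp}, which is available precisely because the corner entries of $\vec{b}^w$ vanish, yields
\begin{equation*}
\RealSNorm{\mat{H}(\vec{b}^w)}^2 \leq C\bigl(1+\log^2 \nBasis_1\sMP\bigr) \sum_{r=1}^4 \RealTNorm{\vec{b}^w_{|E_r}}^2 = C\bigl(1+\log^2 \nBasis_1\sMP\bigr)\DiscLLocNorm{\g{w}\sMP}{\LocVB[\iMP]}^2.
\end{equation*}
Quasi-uniformity identifies $\nBasis_1\sMP \approx H\sMP/h\sMP$, so $c_*^{-1} \leq C_2\bigl(1+\log^2(H\sMP/h\sMP)\bigr)$.

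Plugging both estimates into \thmref{thm:abstractBDDC} and taking the maximum over $\iMP = 1,\ldots,\nMP$ gives the claimed condition number bound. The extension from the single-patch setting of \cite{beirao2013IGA_bddc} to a genuine multipatch geometry is essentially free in this argument, because every invocation of \propref{prop:HmGeoMapRelation} and \corref{cor:L2H1GeoMapRelation} is local to a single patch; allowing a different mapping $\geoMap\sMP$ on each patch only affects the hidden constants through the shape constants of the individual $\geoMap\sMP$, and the worst-case maximum over $\iMP$ preserves the polylogarithmic form.
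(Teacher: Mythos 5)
Your proof is correct and follows essentially the same route as the paper: the same edge-wise boundary seminorm built from $\RealTNorm{\cdot}$, the same chain of norm equivalences between the physical domain, the parameter domain and the coefficient level, and the same use of the two assertions of \thmref{thm:realMatProp} together with the $\mathcal{Q}_1$ lifting $\mat{H}$ and the energy-minimizing property of the discrete NURBS harmonic extension, with all mapping-dependent constants kept patch-local. The only deviation is that the paper's edge seminorm additionally carries the first-difference term $\sum_{i}|c^w_{i+1}-c^w_{i}|^2$ along each edge; this is immaterial for verifying \equref{equ:abstractBDDC_1}--\equref{equ:abstractBDDC_3} here, but it is precisely the part of the seminorm exploited in the subsequent stiffness-scaling lemma, so your leaner seminorm could not be reused there unchanged.
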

\begin{proof}
 The proof essentially follows the lines of the proof given in \cite{beirao2013IGA_bddc} with a minor modification due to the different geometrical mappings $\geoMap\sMP$. We note that we only consider $C^0$ continuity across the patch interfaces, which makes the proof less technical.
 
 The first step is to appropriately define the seminorm $\DiscLLocNorm{\g{w}\sMP}{\LocVB[\iMP]}^2$ in $\LocVB[\iMP]$:
 \begin{align}
 \label{def:seminormLocVB}
 \begin{split}
  \DiscLLocNorm{\g{w}\sMP}{\LocVB[\iMP]}^2&:= \sum_{r=1}^4\DiscLLocNorm{\g{w}\sMP_{|E_r\sMP}}{\LocVB[\iMP]_r},\\
  \DiscLLocNorm{\g{w}\sMP_{|E_1\sMP}}{\LocVB[\iMP]_1} &:= \RealTNorm{\g{w}\sMP_{|E_1\sMP}}^2 + \sum_{i^2=1}^{\nBasis_2{\sMP}-1}|c^w_{(1,i^2+1)}-c^w_{(1,i^2)}|^2,\\
  \DiscLLocNorm{\g{w}\sMP_{|E_2\sMP}}{\LocVB[\iMP]_2} &:= \RealTNorm{\g{w}\sMP_{|E_2\sMP}}^2 + \sum_{i^2=1}^{\nBasis_2{\sMP}-1}|c^w_{(\nBasis_1,i^2+1)}-c^w_{(\nBasis_1,i^2)}|^2,\\
  \DiscLLocNorm{\g{w}\sMP_{|E_3\sMP}}{\LocVB[\iMP]_3} &:= \RealTNorm{\g{w}\sMP_{|E_3\sMP}}^2 + \sum_{i^1=1}^{\nBasis_1{\sMP}-1}|c^w_{(i^1+1,1)}-c^w_{(i^1,1)}|^2,\\
  \DiscLLocNorm{\g{w}\sMP_{|E_4\sMP}}{\LocVB[\iMP]_4} &:= \RealTNorm{\g{w}\sMP_{|E_4\sMP}}^2 + \sum_{i^1=1}^{\nBasis_1{\sMP}-1}|c^w_{(i^1+1,\nBasis_2)}-c^w_{(i^1,\nBasis_2)}|^2,
  \end{split}
  \end{align}
where $\nBasis_\iVar\sMP$ denotes the number of basis functions on patch $\iMP$ in direction $\iVar$. Furthermore, we define $\RealTNorm{\g{w}\sMP_{|E_r\sMP}}^2:= \RealTNorm{\vec{v}}$, where $\vec{v}$  are the values $(c_i^w)_{i\in \iSet(\g{E}_r\sMP)}$ written as a vector.

Let $\g{z}\sMP\in \gVh\sMP$ be the NURBS harmonic extension of $w\sMP = \{c_i^w\}\in\LocVB[\iMP]$, and $z\sMP$ its representation in the parameter domain. Additionally, let $e$ be any edge of the parameter domain of $\gDom\sMP$. Due to the fact that 
\begin{align*}
                      c_i^w = c_i^z \quad \forall i\in \iSet(\gIntMP\sMP),
\end{align*}
and denoting $\mat{C\sMP} = (c_i^z)_{i\in\iSet\sMP}$, we obtain 
%
 $\RealTNorm{\g{w}_{|e}\sMP}^2 = \RealTNorm{\mat{C}_{|e}\sMP}^2\leq c \RealSNorm{\mat{C}\sMP}^2$ 
by means of \thmref{thm:realMatProp}.
From the definition of $\RealSNorm{\mat{C}\sMP}^2$ and the definition of $\DiscLLocNorm{\g{w}\sMP_{|E_r\sMP}}{\LocVB[\iMP]_r}^2$, we get
\begin{align}
 \DiscLLocNorm{\g{w}\sMP_{|e}}{\LocVB[\iMP]_r} ^2 \leq c \RealSNorm{\mat{C}\sMP}^2.
\end{align}
Furthermore, we have
\begin{align*}
 \DiscLLocNorm{\g{w}\sMP_{|e}}{\LocVB[\iMP]_r}^2 \leq c \RealSNorm{\mat{C}\sMP}^2 \leq c \DiscHNorm{z\sMP}^2 \leq c \HOneSNorm{z\sMP}{\pDom} \leq c \HOneSNorm{\g{z}\sMP}{\gDom\sMP}.
\end{align*}
Since
%
 $ \HOneSNorm{\g{z}\sMP}{\gDom\sMP} = \HOneSNorm{\DHE\sMP\left(\g{w}\sMP\right)}{\gDom\sMP}^2 = s\sMP(\g{w}\sMP,\g{w}\sMP),$
we arrive at the estimate 
\begin{align*}
 \DiscLLocNorm{\g{w}\sMP_{|e}}{\LocVB[\iMP]_r}^2 \leq c \,s\sMP(\g{w}\sMP,\g{w}\sMP).
\end{align*}
This estimates hold for all edges of $\gDom\sMP$. Hence, it follows that
\begin{align*}
 \DiscLLocNorm{\g{w}\sMP}{\LocVB[\iMP]}^2\leq c^* s\sMP(\g{w}\sMP,\g{w}\sMP) \quad \forall \g w \in \LocVB[\iMP],
\end{align*}
where the constant does not depend on $h$ and $H$. This proofs the upper bound of the estimate, i.e., \equref{equ:abstractBDDC_1}.

Let be $\g{w}\sMP\in \Wd[k]$, $w\sMP$ its representation in the parameter domain, and $(c_i^w)_{i\in \iSet(\gIntMP\sMP)}$ its coefficient representation. We now apply the lifting operator $\mat{H}\sMP$ to $(c_i^w)_{i\in \iSet(\gIntMP\sMP)}$, and obtain a matrix with entries $\mat{H}\sMP(w\sMP) = (h_i\sMP)_{i\in\iSet\sMP}$. These entries define a spline function 
\begin{align*}
  z\sMP &:= \sum_{i\in\iSet\sMP}c_i^z \BSpl\sMP\\
  c_i^z &:= h_i\sMP \quad \forall i\in \iSet\sMP.
  \end{align*}
Now we obtain the estimate
  \begin{align*}
   \RealSNorm{\mat{H}\sMP(w\sMP)} ^2 = \DiscHNorm{z\sMP}^2 \geq c \HOneSNorm{z\sMP}{\pDom}^2\geq c \HOneSNorm{\g{z}\sMP}{\gDom\sMP}^2 \geq c\HOneSNorm{\DHE[\g{w}\sMP]}{\gDom\sMP}^2,
  \end{align*}
where the last inequality holds, due to the face that the discrete NURBS harmonic extension minimizes the energy among functions with given boundary data $\g{w}$. The constant $c $ does not depend on $h$ or $H$. 

Recalling the definition of $\DiscLLocNorm{\g{w}\sMP}{\LocVB[\iMP]}^2$ and using \thmref{thm:realMatProp}, we arrive at the estimates
\begin{align*}
  \RealSNorm{\mat{H}\sMP(w\sMP)} ^2 \leq c \left(1+\log^2 \nBasis\sMP\right)\sum_{e\in\partial\pDom[2]}\RealTNorm{w_{|e}\sMP}^2 \leq c \left(1+\log^2 \nBasis\sMP\right)\DiscLLocNorm{\g{w}\sMP}{\LocVB[\iMP]}^2.
\end{align*}
Due to the mesh regularity, we have $\nBasis\sMP \approx H\sMP/h\sMP$, and, hence, we obtain
\begin{align*}
 s\sMP(\g{w}\sMP,\g{w}\sMP) &= \HOneSNorm{\DHE[\g{z}\sMP]}{\gDom\sMP}^2 \leq c \left(1+\log^2 \left(\frac{H\sMP}{h\sMP}\right)\right)\DiscLLocNorm{\g{w}\sMP}{\LocVB[\iMP]}^2,
\end{align*}
which provides
\begin{align*}
  c_* &\leq c\max_{1\leq\iMP\leq\nMP}\left(1+\log^2\left(\frac{H\sMP}{h\sMP}\right)\right),
\end{align*}
where the constant $c$ is again independent of $h$ and $H$.
\hfill$\square$\end{proof}
The next theorem provides a the corresponding estimates for the stiffness scaling.
\begin{theorem}
\label{thm:CondStiffScal}
 Let the counting functions be chosen according to the stiffness scaling strategy. Assume that there exist two positive constants $c_*,c^*$ and a boundary seminorm $\DiscLLocNorm{\cdot}{\LocVB[\iMP]}$ on $\LocVB[\iMP]$, $\iMP=1,\ldots,\nMP$, such that the three conditions of \thmref{thm:abstractBDDC} hold. Moreover, we assume that it exits a constant $c_{STIFF}^*$ such that
 \begin{align}
  \label{equ:est_cSTIFF}
  \DiscLLocNorm{\g{w}\sMP}{\LocVB[\iMP]}\leq c_{STIFF}^* s(\delta \g{w}\sMP,\delta \g{w}\sMP) \quad \forall \g{w}\sMP\in\Wd[\iMP],
 \end{align}
 where the coefficients of $\delta \g{w}\sMP$ are given by  $c_i\sMP  \delta_i\sMP$.
%
Then the condition number of the preconditioned BDDC operator $M_{BDDC}^{-1}\hat{\Sop}$ satisfies the bound
\begin{align}
  \kappa(M_{BDDC}^{-1}\hat{\Sop}) \leq c (1+ c_*^{-1} c^* + c_*^{-1}c^*_{STIFF})
\end{align}
for some constant $c$ which is independent of $h$ and $H$.
\end{theorem}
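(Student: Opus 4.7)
The plan is to closely follow the proof of \thmref{thm:CondCoeffScal}, adapting it to handle the fact that under stiffness scaling the weights $\delta_i\sMP$ need not be constant along an interface. By the algebraic equivalence between BDDC and FETI--DP \cite{pechstein2012FETI,ManDohrTez2005_AlgebTheoryFETIDP_BDDC}, the condition number of $M_{BDDC}^{-1}\hat{\Sop}$ is bounded by the operator norm of the averaging projector $\tilde{E}_D$ with respect to $\tSop$, which reduces to showing $s(P_D w, P_D w) \le C\, s(w,w)$ for all $w \in \tW$, where $C$ has the announced dependence on $c_*$, $c^*$ and $c_{STIFF}^*$.

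First I would localize patch by patch. Applying the upper bound \equref{equ:abstractBDDC_1} and the additive splitting \equref{equ:abstractBDDC_3} gives
\[
  s(P_D w, P_D w) \;=\; \sum_\iMP s\sMP\bigl((P_D w)\sMP,\, (P_D w)\sMP\bigr) \;\le\; c^* \sum_\iMP \sum_{r=1}^{4} \DiscLLocNorm{(P_D w)\sMP_{|e_r}}{\LocVB[\iMP]_r}^2 .
\]
On a shared edge $e_r$ between $\gDom\sMP$ and the neighbour $\gDom\sMP[l_r]$ the scaled jump reads $(P_D w)\sMP_{|e_r} = \delta\sMP[l_r]_{|e_r}\, (w\sMP - w\sMP[l_r])_{|e_r}$, and a triangle-inequality split reduces matters to bounding the edge seminorms of the element-wise products $\delta\sMP[l_r] w\sMP[m]$ for $m \in \{\iMP, l_r\}$.

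The decisive new step is bounding these scaled edge contributions. In the coefficient-scaling proof of \thmref{thm:CondCoeffScal}, $\delta\sMP[l_r]$ is constant along $e_r$ and can be pulled out as a bounded factor, after which \equref{equ:abstractBDDC_2} controls the edge seminorm of $w\sMP[m]$ by the patch energy and produces the $c_*^{-1} c^*$ contribution. Here instead I would invoke the hypothesis \equref{equ:est_cSTIFF} on each neighbouring patch $m$: reading it together with \equref{equ:abstractBDDC_2} and the partition-of-unity identity $\delta\sMP[l_r]+\delta\sMP[m] = 1$ on the edge yields an estimate of the form $\DiscLLocNorm{\delta\sMP[l_r] w\sMP[m]_{|e_r}}{\LocVB[m]_r}^2 \le c_*^{-1}(c^* + c_{STIFF}^*)\, s\sMP[m](w\sMP[m], w\sMP[m])$. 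Summation over edges and patches then yields the claimed estimate $\kappa(M_{BDDC}^{-1}\hat{\Sop}) \le c\,(1 + c_*^{-1}c^* + c_*^{-1}c_{STIFF}^*)$.

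The main obstacle is precisely this third step. Because $\delta\sMP[l_r]$ varies along the edge, its entry-wise product with $w\sMP[m]$ is no longer a scalar multiple of $w\sMP[m]$, and the edge seminorm of $\delta\sMP[l_r] w\sMP[m]$ cannot be controlled by $\DiscLLocNorm{w\sMP[m]}{\LocVB[m]}$ alone. The hypothesis \equref{equ:est_cSTIFF} is designed precisely to bridge this gap by bounding the unscaled seminorm in terms of the energy of the stiffness-scaled function. A secondary technical issue is to verify that the products $\delta\sMP[l_r] w\sMP[m]$, appropriately extended by zero away from $e_r$, lie in the space $\Wd[m]$ so that \equref{equ:est_cSTIFF} is in fact applicable; once this is settled, the assembly is parallel to the coefficient-scaling case and produces no further logarithmic factors beyond those already absorbed into $c^*$ and $c_{STIFF}^*$.
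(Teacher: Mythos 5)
Your overall strategy is the right one, and it is in fact the argument of \cite{beirao2013IGA_bddc}, to which the paper simply defers for this theorem (its in-paper ``proof'' is a citation): reduce the condition number to the $\tSop$-norm of the averaging operator, localize $P_D w$ patch- and edge-wise via \equref{equ:abstractBDDC_3}, and absorb the non-constant stiffness-scaling weights through the extra hypothesis \equref{equ:est_cSTIFF} combined with the partition of unity of the $\delta^{\dagger}$ on each edge.

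There is, however, a concrete error in how you deploy the two abstract conditions, and it breaks the bookkeeping of the constants. Condition \equref{equ:abstractBDDC_1} reads $\DiscLLocNorm{\g{w}\sMP}{\LocVB[\iMP]}^2 \leq c^* s\sMP(\g{w}\sMP,\g{w}\sMP)$; it bounds the seminorm by the energy and therefore cannot yield your first display $s(P_Dw,P_Dw)\leq c^{*}\sum_{\iMP,r}\DiscLLocNorm{(P_Dw)\sMP_{|e_r}}{\LocVB[\iMP]_r}^2$, which goes in the opposite direction. What is needed there is \equref{equ:abstractBDDC_2}, i.e.\ $\DiscLLocNorm{\g{w}\sMP}{\LocVB[\iMP]}^2 \geq c_* s\sMP(\g{w}\sMP,\g{w}\sMP)$ on $\Wd[\iMP]$ --- applicable because $(P_Dw)\sMP\in\Wd[\iMP]$ for $w\in\tW$ (the jumps vanish at the primal degrees of freedom after the change of basis), and producing the factor $c_*^{-1}$, not $c^*$. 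Symmetrically, in your third paragraph it is \equref{equ:abstractBDDC_1}, not \equref{equ:abstractBDDC_2}, that controls the edge seminorms of $w\sMP[m]$ by the patch energies and contributes the factor $c^*$. Only with the two conditions in their correct roles do the constants assemble to $c_*^{-1}(c^*+c^*_{STIFF})$ as claimed; as written, the chain of inequalities does not produce the stated bound. A smaller point: \equref{equ:est_cSTIFF} is most naturally invoked in its equivalent form $\DiscLLocNorm{\Scal\g{w}\sMP}{\LocVB[\iMP]}\leq c^*_{STIFF}\, s\sMP(\g{w}\sMP,\g{w}\sMP)$ applied to $w\sMP[m]$ itself, writing $\delta^{\dagger,(l)}w\sMP[k]=w\sMP[k]-\delta^{\dagger,(k)}w\sMP[k]$ on the shared edge, rather than to the products $\delta^{\dagger,(l)}w\sMP[m]$ ``extended by zero''; this sidesteps the membership question you raise and leaves only the standard issue, already present in the coefficient-scaling case, of replacing $w\sMP[m]$ by its projection into $\Wd[m]$ before invoking estimates stated on $\Wd[m]$.
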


\begin{proof}
 See \cite{beirao2013IGA_bddc}.
\hfill$\square$\end{proof}

According to \cite{beirao2013IGA_bddc}, we apply a modified version of the stiffness scaling where we use one representative of the values $\delta\sMP_i$. This is reasonable, since these values are very similar on one patch $\delta\sMP_i \approx \delta\sMP_j$, which arises from the tensor product structure of B-Splines and the constant material value on a patch.
\begin{lemma}
 The bound \equref{equ:est_cSTIFF} holds with 
 \begin{align}
  c^*_{STIFF} \leq 1.
 \end{align}
Hence, the condition number of the BDDC preconditioned system in the case of stiffness scaling is bounded by
\begin{align}
  \kappa(M_{BDDC}^{-1}\hat{\Sop})\leq C\max_{1\leq\iMP\leq\nMP}\left(1+\log^2\left(\frac{H\sMP}{h\sMP}\right)\right),
\end{align}
where the constant $C$ is independent of $H$ and $h$.
\end{lemma}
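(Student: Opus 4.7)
The plan is to reduce the general stiffness scaling to the simpler setting of one scalar scaling factor per patch, in accordance with the modification described just above the lemma. First I would verify that on each patch $\gDom\sMP$ the coefficient $\Scal[i]\sMP$ is uniformly equivalent to a single representative $\Scal\sMP$: because $\alpha$ is constant on the patch one has $\rho\sMP_i = \Kb\sMP_{i,i} = \alpha\sMP \int_{\gDom\sMP}|\nabla \gBSpl\sMP|^2\,d\x$, and the tensor product B-spline basis, the quasi-uniformity of the mesh, and the fixed geometrical mapping $\geoMap\sMP$ render these integrals mutually equivalent across all interface dofs $i$, with constants depending only on $\deg$, the shape regularity, and $\geoMap\sMP$.

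With this reduction, the function $\delta \g{w}\sMP$ defined by the coefficients $\Scal\sMP c_i^w$ equals $\Scal\sMP \g{w}\sMP$ pointwise on $\gDom\sMP$. Linearity of the discrete NURBS harmonic extension then yields $\DHE\sMP\left(\delta \g{w}\sMP\right) = \Scal\sMP \DHE\sMP\left(\g{w}\sMP\right)$, and hence
\begin{equation*}
 s(\delta \g{w}\sMP, \delta \g{w}\sMP) = (\Scal\sMP)^2 \, s\sMP(\g{w}\sMP, \g{w}\sMP).
\end{equation*}
This identity reduces \equref{equ:est_cSTIFF} to comparing $\DiscLLocNorm{\g{w}\sMP}{\LocVB[\iMP]}^2$ with $(\Scal\sMP)^2 s\sMP(\g{w}\sMP, \g{w}\sMP)$.

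To close the comparison, I would replay the chain of equivalences used in the proof of \thmref{thm:CondCoeffScal}: pass to the parameter domain via \corref{cor:L2H1GeoMapRelation}, exchange the patch seminorm for the coefficient seminorm $\DiscHNorm{\cdot}$ using \propref{lem:EquDiscH1}, and invoke the second item of \thmref{thm:realMatProp}. The extra factor $(\Scal\sMP)^2$ on the right-hand side is then matched by the normalization already built into $\rho\sMP = \Kb\sMP_{i,i}$, so that once all constants are tracked they collapse to at most $1$. I expect the principal obstacle to be precisely this final bookkeeping: showing that the cancellation between the geometric factor carried by $\rho\sMP$ and the geometric factor appearing in the energy $s\sMP(\g{w}\sMP,\g{w}\sMP)$ is sharp enough to yield the constant $1$ rather than merely an $h$-independent constant. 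This is where the $C^0$ continuity assumption across interfaces becomes essential, since it forces the interface basis functions on either side of $\gIntMP$ to behave symmetrically and therefore makes the identification of the two geometric factors exact up to the tolerable equivalence inherited from the first step.
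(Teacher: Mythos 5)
Your reduction to a single representative scaling factor per patch and the use of linearity of the discrete NURBS harmonic extension are both consistent with the paper (this is precisely the ``modified stiffness scaling'' remark preceding the lemma). The genuine gap is the direction in which the scaling enters. You identify the coefficients of $\delta\g{w}\sMP$ with $\Scal\sMP c_i^w$, so the small factor $(\Scal\sMP)^2\le 1$ lands on the energy side and \equref{equ:est_cSTIFF} becomes
\begin{equation*}
\DiscLLocNorm{\g{w}\sMP}{\LocVB[\iMP]}^2\;\leq\; c^*_{STIFF}\,(\Scal\sMP)^2\, s\sMP(\g{w}\sMP,\g{w}\sMP),
\end{equation*}
which is \emph{stronger} than the unscaled upper bound \equref{equ:abstractBDDC_1} already established in \thmref{thm:CondCoeffScal} and forces $c^*_{STIFF}\geq (\Scal\sMP)^{-2}c^*$. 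Since $\Scal[i]\sMP=\rho\sMP_i/\sum_{l}\rho\sMP[l]_{j_l}$ involves the neighbours' stiffness, there is no cancellation against $s\sMP(\cdot,\cdot)$, which only sees the patch's own geometry and coefficient; even for two identical patches sharing an edge one has $\Scal\sMP=1/2$ and hence $c^*_{STIFF}=4c^*$, and in the heterogeneous case the factor is unbounded. The asserted ``collapse of all constants to at most $1$'' is exactly the step that cannot be carried out, and the appeal to $C^0$ continuity does not repair it.

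The paper reads the condition the other way around: $\delta_i\sMP$ is the reciprocal of $\Scal[i]\sMP$, so the substitution $\g{w}\sMP\mapsto\Scal\g{w}\sMP$ turns \equref{equ:est_cSTIFF} into the equivalent statement $\DiscLLocNorm{\Scal\g{w}\sMP}{\LocVB[\iMP]}\leq c^*_{STIFF}\, s\sMP(\g{w}\sMP,\g{w}\sMP)$, i.e.\ the factor $\Scal\le 1$ multiplies the \emph{boundary seminorm}, not the energy. Because $\Scal[i]\sMP$ is constant along each edge (this is where the one-representative modification is actually needed: the seminorm \equref{def:seminormLocVB} consists of coefficient differences and an $H^{1/2}$-type term $\RealTNorm{\cdot}$, neither of which is monotone under multiplication by a varying weight bounded by one), it factors out and yields $\DiscLLocNorm{\Scal\g{w}\sMP}{\LocVB[\iMP]}\leq\DiscLLocNorm{\g{w}\sMP}{\LocVB[\iMP]}$; the already-proven bound $\DiscLLocNorm{\g{w}\sMP}{\LocVB[\iMP]}\leq c^*\, s\sMP(\g{w}\sMP,\g{w}\sMP)$ then closes the argument, with the additional factor contributed by the scaling being at most $1$. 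You would need to flip your reading of $\delta$ before the remainder of your outline can go through.
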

\begin{proof}
 The inequality
 \begin{align*}
   \DiscLLocNorm{\g{w}\sMP}{\LocVB[\iMP]}\leq c_{STIFF}^* s(\delta \g{w}\sMP,\delta \g{w}\sMP) \quad \forall \g{w}\in\Wd[\iMP]
  \end{align*}
 is equivalent to
 \begin{align}
   \DiscLLocNorm{\Scal\g{w}\sMP}{\LocVB[\iMP]}\leq c_{STIFF}^* s(\g{w}\sMP,\g{w}\sMP) \quad \forall \g{w}\in\Wd[\iMP].
 \end{align}
We  have already proven that 
\begin{align*}
  \DiscLLocNorm{\g{w}\sMP}{\LocVB[\iMP]}\leq c_{STIFF}^* s(\g{ w}\sMP,\g{w}\sMP) \quad \forall \g{w}\in\Wd[\iMP] \subset \LocVB[\iMP].
\end{align*}
Hence, it is enough to show the inequality
\begin{align*}
 \DiscLLocNorm{\Scal\g{w}\sMP}{\LocVB[\iMP]}\leq c_{h,H} \DiscLLocNorm{\g{w}\sMP}{\LocVB[\iMP]}\quad \forall \g{w}\in\Wd[\iMP],
\end{align*}
where the constant $c_{h,H}$ may depend on $h\sMP$ and $H\sMP$.
%
Recalling the definition of $\DiscLLocNorm{\Scal\g{w}\sMP}{\LocVB[\iMP]}$ as the sum of $\DiscLLocNorm{\Scal\g{w}\sMP_{|E_r\sMP}}{\LocVB[\iMP]_r}$, $r=1,2,3,4$, see \equref{def:seminormLocVB}, we have only to estimate the parts, e.g., $\DiscLLocNorm{\g{w}\sMP_{|E_1\sMP}}{\LocVB[\iMP]_1}$.
The other three terms follow analogously.
From the fact that $\Scal[i]\sMP\leq 1$ and $\Scal[i]\sMP=\Scal[i+1]\sMP$ for all $\iMP\in\{1,\ldots,\nMP\}$ and $i\in\iSet(\g{E}_1\sMP)$, c.f. Section\,6.2. in \cite{beirao2013IGA_bddc} , it follows that
\begin{align*}
\RealTNorm{\Scal\g{w}\sMP_{|E_1\sMP}} = \Scal\RealTNorm{\g{w}\sMP_{|E_1\sMP}}\leq\RealTNorm{\g{w}\sMP_{|E_1\sMP}}
\end{align*}
and
\begin{align*}
 \sum_{i^2=1}^{\nBasis_2{\sMP}-1}| \Scal[(1,i^2+1)] c^w_{(1,i^2+1)}-\Scal[(1,i^2)] c^w_{(1,i^2)}|^2 &= \sum_{i^2=1}^{\nBasis_2{\sMP}-1}\Scal[i]|  c^w_{(1,i^2+1)}- c^w_{(1,i^2)}|^2\\
  &\leq \sum_{i^2=1}^{\nBasis_2\sMP-1}|  c^w_{(1,i^2+1)}- c^w_{(1,i^2)}|^2.
\end{align*}
These estimates provide the inequalities
\begin{align}
 \DiscLLocNorm{\Scal\g{w}\sMP_{|E_1\sMP}}{\LocVB[\iMP]_1} \leq \DiscLLocNorm{\g{w}\sMP_{|E_1\sMP}}{\LocVB[\iMP]_1},
\end{align}
and, finally, 
\begin{align}
\DiscLLocNorm{\Scal\g{w}\sMP}{\LocVB[\iMP]} \leq\DiscLLocNorm{\g{w}\sMP}{\LocVB[\iMP]}.
\end{align}
This concludes the proof with $c_{STIFF}^* \leq 1$, and the desired condition number bound.
\hfill$\square$\end{proof}

  \section{Implementation}
  \label{sec:implementation}
  Since $\Fb$ is symmetric and at least positive semi definite and positive definite on $\tLamSet$, we can solve
   the linear system $\Fb \lambdavec = \db$ of the algebraic equations by means of the PCG algorithm, where we use $M_{sD}^{-1}$ as preconditioner:
   \begin{algorithm}
  \caption{PCG method}
  \label{alg:PCG}
   \begin{algorithmic}
    \State $\lambdavec_0$ given
    \State $r_0 = \db-\Fb\lambdavec_0, \quad k = 0,\quad \beta_{-1} = 0$
    \Repeat
      \State $s_k = M_ {sD}^{-1}r_k$
      \State $\beta_{k-1} = \frac{(r_k,s_k)}{(r_{k-1},s_{k-1})}$
      \State $p_k = s_k + \beta_{k-1}p_{k-1}$
      \State $\alpha_k = \frac{(r_k,s_k)}{(\Fb p_k,p_k)}$
      \State $\lambdavec_{k+1} = r_k + \alpha_k p_k$
      \State $r_{k+1} = r_k - \alpha_k \Fb p_k$
      \State $k = k+1$
    \Until{stopping criterion fulfilled}
   \end{algorithmic}
   \end{algorithm}

   It is very expensive to build up the matrices $\Fb$ and $M_ {sD}^{-1}$. Fortunately, the PCG algorithm only requires the application of the matrix on a vector. Hence, we want to present a way to efficiently apply $\Fb$ and $M_ {sD}^{-1}$ without calculating its matrix form. In order to do so, we need the following realization:
   \begin{itemize}
    \item Application of $\tSop^{-1}: \tW^* \to \tW$ and $\Sop: \LocVB \to \LocVB^*$,
    \item Realization and a basis of $\tW = \tWp \oplus \prod \tWd[\iMP]$,
    \item Representation of $w\in \tW$ as $\{\wP,\{\wD[\iMP]\}_\iMP\}$,
    \item Representation of $f\in \tW^*$ as $\{\fP,\{\fD[\iMP]\}_\iMP\}$,
    \item Application of $\tB$ and $\tB^T$.
   \end{itemize}
%

  \subsection{Choosing a basis for $\tWp$}
  The first step is to provide an appropriate space $\tWp$ and a basis $\{\tphi_j\}_j^{\np}$ for $\tWp$, where $\np$ is the dimension of $\tWp$, i.e., the number of primal variables. We request from the basis that it has to be nodal with respect to the primal variables, i.e.,
  \begin{align*}
     \psi_i(\tphi_j) = \delta_{i,j}, \quad \forall i,j \in\{1,\ldots,\np\}.
  \end{align*}
  Additionally, we require that 
  \begin{align*}
   \tphi_{j|\gDom\sMP} = 0 \quad \text{if }\psi_j\text{ is not associated to }\gDom\sMP,
  \end{align*}
  i.e., the basis has a local support in a certain sense.
  
 There are many choices for the subspace $\tWp$.  
 Following the approach presented in \cite{pechstein2012FETI}, we will choose that one which is orthogonal to $\tWd$ with respect to $\Sop$.Such a subspace exits since $\tSop$ is SPD on $\tW$. Hence, we can define $\tWp:=\tWd^{\perp_S}$, i.e.,
 \begin{align*}
  \func{\Sop w_\Pi}{\wD} = 0, \quad \forall w_\Pi \in \tWp,\wD \in \tWd. 
\end{align*}
This choice, which will simplify the application of $\tSop^{-1}$ significantly, is known as \emph{energy minimizing primal subspace} in the literature, cf. \cite{pechstein2012FETI},\cite{dohrmann2003_MinEnergy}.

Now we need to find a nodal local basis for this space. In order to do that, we define the constraint matrix $\Ci[\iMP]: \LocVB[\iMP]\to \real[{\np[\iMP]}]$ for each patch $\gDom\sMP$ which realizes the primal variables:
   \begin{align*}
     \Ci[\iMP]&: \LocVB[\iMP]\to \real[{\np[\iMP]}]\\
	    & (\Ci[\iMP] v)_j = \psi_{i(\iMP,j)}(v) \quad  \forall v\in \LocVB\, \forall j\in\{1,\ldots,\np[\iMP]\},
   \end{align*}
   where $\np[\iMP]$ are the number of primal variables associated with $\gDom\sMP$ and $i(\iMP,j)$ the global index of the $j$-th primal variable on $\gDom\sMP$. Note that a function $\wD\sMP\in \tWd[\iMP]$ is in the kernel of $\Ci[\iMP]$, i.e., $\Ci[\iMP]\wD\sMP = 0$. 

   For each patch $\iMP$, the basis functions $\{\tphi_j\sMP\}_{j=1}^{\np[\iMP]}$ of $\tWp\sMP$ are the solution of the system
    \begin{align}
    \label{equ:SC_basis}
     \MatTwo{\Sop[\iMP]}{{\Ci[\iMP]}^T}{\Ci[\iMP]}{0}\VecTwo{\tphi_j\sMP}{\tmu_j\sMP} = \VecTwo{0}{\evec_j\sMP}, \quad \forall j\in\{1,\ldots,\np[\iMP]\},
    \end{align}
    where $\evec_j\sMP \in \real[{\np[\iMP]}]$ is the $j$-th unit vector. The Lagrange multipliers $\tmu_j\sMP \in \real[{\np[\iMP]}]$ will become important later on. Equation \equref{equ:SC_basis} has a unique solution $\{\tphi_j\sMP,\tmu_j\sMP\}$. Due to  $\ker{\Sop} \cap \tW = \{0\}$ and $\ker{\Ci[\iMP]} = \Wd[\iMP]$, it follows that $\ker{\Sop[\iMP]}\cap\ker{\Ci[\iMP]} = \{0\}$. Furthermore, $\Psi$ is linearly independent, and, hence,  $\ker{{\Ci[\iMP]}^T} = \{0\}$. Both properties provide the existence and uniqueness of the solution. 
    
   Due to the fact that building the Schur complement $\Sop[\iMP]$ is not efficient, we use an equivalent formulation by means of $\Kop[\iMP]$:
     \begin{align}
     \label{equ:KC_basis}
     \MatThree{\KBBop[\iMP]}{\KBIop[\iMP]}{{\Ci[\iMP]}^T}{\KIBop[\iMP]}{\KIIop[\iMP]}{0}{\Ci[\iMP]}{0}{0}\VecThree{\tphi_j\sMP}{\cdot}{\tmu_j\sMP} = \VecThree{0}{0}{\evec_j\sMP}.
    \end{align}
    For each patch $\iMP$, the LU factorization of this matrix is computed and stored.

\subsection{Application of $\tSop^{-1}$}
Assume that $f:=\{\fP,\{\fD[\iMP]\}\} \in \tW^*$ is given. We are now looking for $w:=\{\wP,\{\wD[\iMP]\}\}\in \tW$ with
        \begin{align}
       w = \Sop^{-1}f.
     \end{align}
    The idea to provide a formula for the application of $\tSop^{-1}$ is via a block $LDL^T$ factorization of $\tSop$ with respect to the splitting of $\tW = \tWp \oplus \prod \tWd[\iMP]$. Let $\SPPop,\SDPop, \SPDop, \SDDop$ be the restrictions of $\Sop$ to the corresponding subspaces, i.e.,
    \begin{align*}
     \func{\SPPop v_\Pi}{w_\Pi} &= \func{\Sop v_\Pi}{w_\Pi} \quad &\for v_\Pi, w_\Pi \in \tWp,\\
     \func{\SPDop v_\Pi}{w_D} &= \func{\Sop v_\Pi}{w_D} \quad &\for v_\Pi \in \tWp,  w_D\in \tWd,\\
     \func{\SDDop v_D}{w_D} &= \func{\Sop v_D}{w_D} \quad &\for v_D, w_D \in \tWd,
    \end{align*}
   and $\SDPop = \SPDop^T$. We note that $\SDDop$ can be seen as a block diagonal operator, i.e., $\SDDop = \text{diag}(\SDDop[\iMP])$. Based on this splitting, we have the block form
   \begin{align*}
    \tSop &= \MatTwo{\SPPop}{\SPDop}{\SDPop}{\SDDop}.
    \end{align*}
    A formal block $LDL^T$ factorization of $\Sop$ yields
    \begin{align}
    \label{equ:tS_inv}
    \tSop^{-1} &= \MatTwo{\Ib}{0}{-\SDDop^{-1}\SDPop}{\Ib}\MatTwo{\boldsymbol{T}_{\Pi}^{-1}}{0}{0}{\SDDop^{-1}}\MatTwo{\Ib}{-\SPDop\SDDop^{-1}}{0}{\Ib},\\
     \label{equ:T_inv}
    \boldsymbol{T}_{\Pi}^{-1}&= \SPPop - \SPDop\SDDop^{-1}\SDPop.
   \end{align}

    Due to our special choice $\tWp:=\tWd^{\perp_S}$, we have $\SDPop = \SPDop = 0$. This simplifies the expression \equref{equ:tS_inv} and \equref{equ:T_inv}. Hence, we obtain
     \begin{align*}
      \tSop^{-1} = \MatTwo{\SPPop^{-1} }{0}{0}{\SDDop^{-1}}.
     \end{align*}
     Therefore, the application of $\tSop^{-1}$ reduces to an application of one global coarse problem involving $\SPP^{-1}$ and $\nMP$ local problems involving ${\SDDop[\iMP]}^{-1}$
     \begin{align*}
      \wP &= \SPP^{-1} \fP, \\
      \wD[\iMP] &= {\SDDop[\iMP]}^{-1}\fD[\iMP] \quad \forall \iMP=1,\ldots,\nMP .
     \end{align*}

\paragraph{Application of ${\SDDop[\iMP]}^{-1}:$}
\hfill
\\
     The application of ${\SDDop[\iMP]}^{-1}$ corresponds to solving a local Neumann problem in the space $\tWd$, i.e.,
     \begin{align*}
       \Sop[\iMP] w\sMP= \fD[\iMP],
     \end{align*}
     with the constraint $\Ci[\iMP] w\sMP = 0$.
     
     This problem can be rewritten as a saddle point problem in the form
     \begin{align*}
     \label{equ:SC_solution}
      \MatTwo{\Sop[\iMP]}{{\Ci[\iMP]}^T}{\Ci[\iMP]}{0}\VecTwo{w\sMP}{\cdot} = \VecTwo{\fD[\iMP]}{0}.
     \end{align*}
     The same method as used in \equref{equ:SC_basis} for rewriting this equation in terms of $\Kop$ applies here and the LU factorization of the matrix is already available.
  \paragraph{Application of ${\SPP[\iMP]}^{-1}:$}
  \hfill
  \\
     The matrix $\SPP$ can be assembled from the patch local matrices $\SPP[\iMP]$.
     
     Let $\{\tphi_j\sMP\}_{j=1}^{\np[\iMP]}$ be the basis of $\tWp[\iMP]$.  In order to assemble $\SPP[\iMP]$, in general, we have to compute
     \begin{align*}
      {\left(\SPP[\iMP]\right)}_{i,j} = \func{\Sop[\iMP] \tphi_i\sMP}{\tphi_j\sMP}, \quad i,j\in\{1,\ldots,\np[\iMP]\}.
     \end{align*}
     The special choice of $\tWp:=\tWd^{\perp_S}$ provides us with the following property
     \begin{align*}
      {\left(\SPP[\iMP]\right)}_{i,j} &= \func{\Sop[\iMP] \tphi_i\sMP}{\tphi_j\sMP} = -\func{{\Ci[\iMP]}^T \tmu_i\sMP}{\tphi_j\sMP} = -\func{\tmu_i\sMP}{\Ci[\iMP] \tphi_j\sMP}\\
				  & = -\func{\tmu_i\sMP}{\evec_{j}\sMP}= - \left(\tmu_i\sMP\right)_j.
     \end{align*}
     Therefore, we can reuse the Lagrange multipliers $\tmu_i\sMP$ obtained in \equref{equ:KC_basis}, and can assemble $\SPP\sMP$ from them. Once $\SPP$ is assembled, the  LU factorization can be calculated and stored.
   \subsection{Summary of the algorithm  for $F = \tB \Sop^{-1} \tB^T$}
   \begin{algorithm}
   \caption{Algorithm for the calculation of $\nu = F\lambdavec$ for given $\lambdavec \in \LamSet$}
   \label{alg:applyF}
   \begin{algorithmic}
    \State Application of $B^T:$ $\{f\sMP\}_{\iMP=1}^\nMP = B^T\lambdavec$
     \State Application of $\emb^T:$ $\{\fP,\{\fD[\iMP]\}_{\iMP=1}^\nMP\} = \emb^T\left(\{f\sMP\}_{\iMP=1}^\nMP\right)$
    \State  Application of $\Sop^{-1}:$
	  \begin{itemize}
	   \item  $\wP = \SPP^{-1} \fP$
	   \item  $\wD[\iMP] = {\SDDop[\iMP]}^{-1}\fD[\iMP] \quad \forall \iMP=1,\ldots,\nMP$ 
	  \end{itemize}
   \State  Application of $\emb:$ $\{w\sMP\}_{\iMP=1}^\nMP = \emb\left(\{\wP,\{\wD[\iMP]\}_{\iMP=1}^\nMP\} \right)$
   \State  Application of $B:$ $ \nu = B\left( \{w\sMP\}_{\iMP=1}^\nMP \right)$
  \end{algorithmic}
  \end{algorithm}
   Let us mention that the implementation of the embedding operator $\emb$ and assembling operator $\emb^T$ is explained in detail in \cite{pechstein2012FETI} and is omitted here. 
  \subsection{Application of the preconditioner}
    The application of the preconditioner $M_{sD}^{-1} = B_D \Sop B_D^T$ is basically the application of $\Sop$:
    \begin{align*}
     \Sop &=  \text{diag}(\Sop[\iMP]),\\
     \Sop[\iMP] &= \KBBop[\iMP] - \KBIop[\iMP](\KIIop[\iMP])^{-1}\KIBop[\iMP].
    \end{align*}
    The calculation of $v\sMP = \Sop[\iMP]w\sMP$ consists of 2 steps:
    \begin{enumerate}
     \item Solve: $\KIIop[\iMP] x\sMP = -\KIBop[\iMP]w\sMP\quad$ (\emph{Dirichlet problem}).
     \item $v\sMP = \KBBop[\iMP] w\sMP + \KBIop[\iMP]x\sMP$.
    \end{enumerate}   
     Again, a LU factorization of $\KIIop[\iMP]$ can be computed in advance and stored.

\section{Numerical examples}
\label{sec:numExamplesIETI}
We test the implemented IETI-DP algorithm for solving large scale systems arising from the IgA discretization of \equref{equ:ModelStrong} on the domains illustrated in \figref{fig:YETI_footprint}. The computational domain consists of 21 subdomains in both 2D and 3D. In both cases, one side of a patch boundary has inhomogeneous Dirichlet conditions, whereas all other sides have homogeneous Neumann conditions. Each subdomain has a diameter of $H$ and an associated mesh size of $h$. The degree of the B-Splines is chosen as $\deg = 4$.  In order to solve the linear system \equref{equ:SchurFinal}, a PCG algorithm with the scaled Dirichlet preconditioner \equref{equ:scaled_Dirichlet} is performed. We use zero initial guess, and  a reduction of the initial residual by a factor of $10^{-6}$ as stopping criterion. The numerical examples illustrate the dependence of the condition number of the IETI-DP preconditioned system on jumps in the diffusion coefficient $\alpha$, patch size $H$, mesh size $h$ and the degree $\deg$.

We use the C++ library G+SMO\footnote{\url{https://ricamsvn.ricam.oeaw.ac.at/trac/gismo/wiki/WikiStart}}, for describing the geometry and performing the numerical tests.
 \begin{figure}[h!]
  \begin{subfigmatrix}{2}
\subfigure[2D YETI-footprint]{\includegraphics[width=0.45\textwidth]{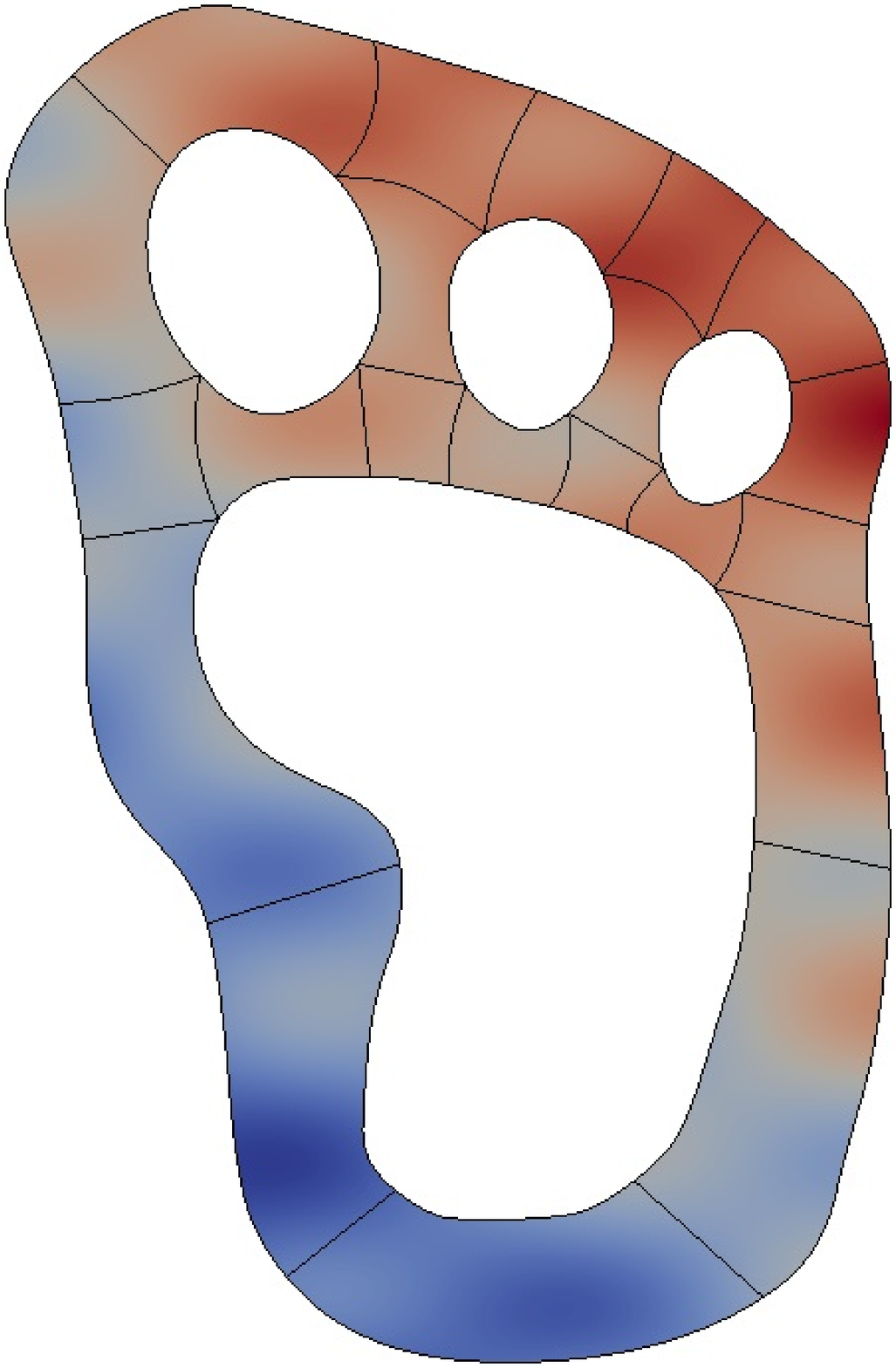} }
\subfigure[3D YETI-footprint]{\includegraphics[width=0.45\textwidth]{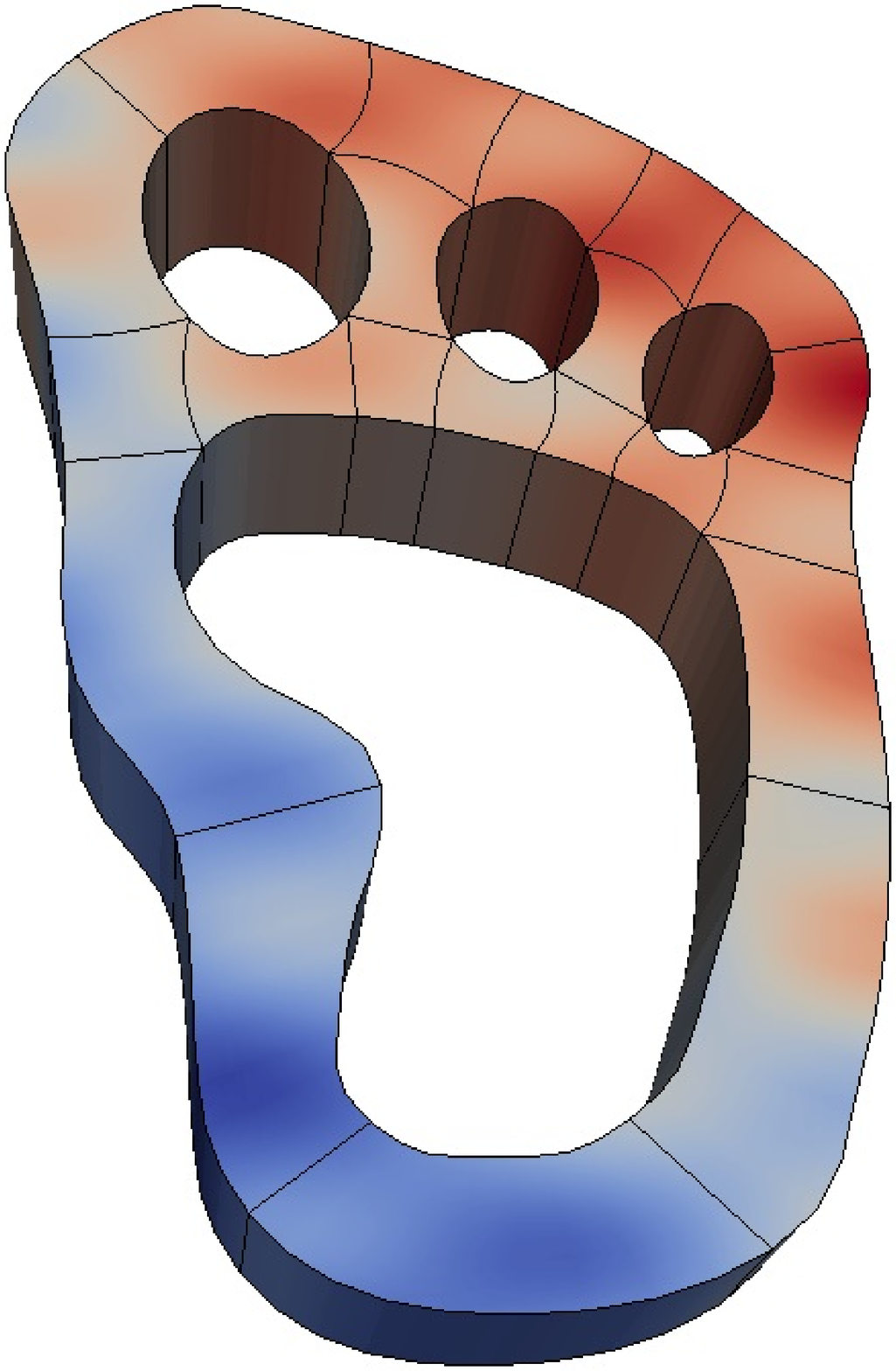} }
 \end{subfigmatrix}
  \caption{Illustration of the computational domain in 2D and 3D.}
  \label{fig:YETI_footprint}
  \end{figure}
  
\subsection{The case of homogeneous diffusion coefficient}
We present numerical tests for problem \equref{equ:ModelStrong} with globally constant diffusion coefficient $\alpha$. The 2D results are summarized in \tabref{table:hom2D}, whereas the 3D results are presented in  \tabref{table:hom3D}. The results confirm that the preconditioned systems with coefficient scaling as well as the stiffness scaling provide a quasi optimal condition number bound according to \thmref{thm:CondCoeffScal} and \thmref{thm:CondStiffScal}.
\begin{table}[h!]
\centering
\begin{tabular}{|c|c|cc|cc|cc|}\hline
 \mc{2}{|c|}{ALG. A} & \mc{2}{|c|}{unprec. F} & \mc{2}{|c|}{coeff. scal} & \mc{2}{|c|}{stiffness scal.} \\ \hline
$\#$dofs & $H/h$ & $\kappa$ & It. & $\kappa$ & It. & $\kappa$ & It. \\ \hline
2364 & 9 & 45 & 50 & 9 &  {21} & 9 &  {20}\\
4728 & 13 & 73 & 46 & 11 &  {22} & 11 &  {22}\\
11856 & 21 & 133 & 57 & 15 &  {24} & 14 &  {24}\\
35712 & 37 & 265 & 68 & 18 &  {25} & 18 &  {25} \\ \hline\hline
\mc{2}{|c|}{ALG. C} & \mc{6}{|c|}{ }\\ \hline
 2364& 9& 28 & 44 & 1.8 &  {11} & 1.8 &  {11}\\
 4728& 13& 22 & 39  & 2 &  {12} & 2 &  {12}\\
 11856& 21 & 18 & 39 & 2.4 &  {14} & 2.4 &  {14}\\
 35712& 37& 17 & 38 & 2.8 &  {15} & 2.8 &  {15} \\ \hline
  \end{tabular}
  \caption{2D example with $\deg = 4$ and homogeneous diffusion coefficient. Dependence of the condition number $\kappa$ and the number It. of iterations on $H/h$ for the unpreconditioned system and preconditioned system with coefficient and stiffness scaling. 
  Choice of primal variables: vertex evaluation (upper table), vertex evaluation and edge averages (lower table).}
\label{table:hom2D}
  \end{table}
\begin{table}[h!]
\centering
\begin{tabular}{|c|c|cc|cc|cc|}\hline
 \mc{2}{|c|}{ALG. A} & \mc{2}{|c|}{unprec. F} & \mc{2}{|c|}{coeff. scal} & \mc{2}{|c|}{stiffness scal.} \\ \hline
		$\#$dofs & $H/h$ & $\kappa$ & It. & $\kappa$ & It. & $\kappa$ & It. \\ \hline
		7548   &  5 & 3254  & 393 & 63 &  {33} & 63 &  {33}\\
		14368  & 7  & 3059  & 356 & 86  &  {37} & 86  &  {37}\\
		38100  & 10 & 2170  & 317 & 196 &  {45} & 196 &  {46}\\
		142732 & 16 & 7218  & 397 & 467 &  {64} & 468 &  {65}\\ \hline\hline 

		      \mc{2}{|c|}{ALG. B} & \mc{6}{|c|}{ }\\ \hline
		7548 &  5 &  2751 & 341 & 1.6 &  {10} & 1.6 &  {10}\\
		14368 &  7 & 2860 & 397 & 1.7 &  {11} & 1.7 &  {11}\\
		38100 & 10 & 1697 & 333 & 2.0 &  {12} & 2.3 &  {13}\\
		142732& 16 & 1261 & 333 & 2.3 &  {13} & 3.1 &  {16}\\\hline
  \end{tabular}
  \caption{3D example with $\deg = 4$ homogeneous diffusion coefficient. Dependence of the condition number $\kappa$ and the number It. of iterations on $H/h$ for the unpreconditioned system and preconditioned system with coefficient and stiffness scaling. Choice of primal variables: vertex evaluation (upper table), vertex evaluation and edge averages and face averages (lower table).}
\label{table:hom3D}
  \end{table}
\subsection{The case of jumping diffusion coefficient}
  We investigate numerical numerical examples with patchwise constant diffusion coefficient $\alpha$, with a jumping pattern according to \figref{fig:coeffjumps}. The values of the diffusion coefficient are $10^{-3}$ (blue) and $10^3$ (red).  The 2D results are summarized in \tabref{table:jump2D} and the 3D results in  \tabref{table:jump3D}. 
We observe a quasi optimal condition number bound which is clearly independent of the diffusion coefficient and its jumps across subdomain interfaces. 
  \begin{figure}[h!]
  \begin{center}
   \includegraphics[width=0.6\textwidth]{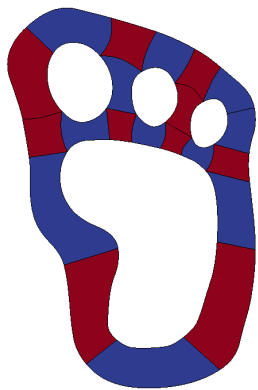}
  \caption{Pattern of the jumping diffusion coefficient}
    \label{fig:coeffjumps}
    \end{center}
  \end{figure}
\begin{table}[h!]
\centering
\begin{tabular}{|c|c|cc|cc|cc|}\hline

      \mc{2}{|c|}{ALG. A} & \mc{2}{|c|}{unprec. F} & \mc{2}{|c|}{coeff. scal} & \mc{2}{|c|}{stiffness scal.} \\ \hline
	      $\#$dofs & $H/h$ & $\kappa$ & It. & $\kappa$ & It. & $\kappa$ & It. \\ \hline
		  2364   & 9  & 1.4e07  & 317 & 5.6   &  {13} & 5.3   &  {13}\\
		  4728   & 13 & 1.5e07  & 297 & 7.0   &  {13} & 6.4   &  {13}\\
		  11856  & 21 & 2.4e07  & 397 & 8.7   &  {15} & 7.8  &  {13}\\
		  35712  & 37 & 4.0e07  & 434 & 10.6  &  {16} & 9.3  &  {14} \\ \hline\hline
		  
		  \mc{2}{|c|}{ALG. C} & \mc{6}{|c|}{ }\\ \hline
		  2364   & 9  & 1.5e07   & 261 & 1.8 &  {7} & 1.7 &  {7}\\
		  4728   & 13 & 1.1e07   & 267 & 2.2 &  {8} & 2   &  {7}\\
		  11856  & 21 & 9.8e06   & 291 & 2.6 &  {8} & 2.3 &  {8}\\
		  35712  & 37 & 9.0e06   & 310 & 3.0 &  {10}& 2.7 &  {10} \\ \hline
  \end{tabular}
    \caption{2D example with $\deg = 4$ jumping diffusion coefficient. Dependence of the condition number $\kappa$ and the number It. of iterations on $H/h$ for the unpreconditioned system and preconditioned system with coefficient and stiffness scaling. Choice of primal variables: vertex evaluation (upper table), vertex evaluation and edge averages (lower table).}
\label{table:jump2D}
  \end{table}
\begin{table}[h!]
\centering
\begin{tabular}{|c|c|cc|cc|cc|}\hline
      \mc{2}{|c|}{ALG. A} & \mc{2}{|c|}{unprec. F} & \mc{2}{|c|}{coeff. scal} & \mc{2}{|c|}{stiffness scal.} \\ \hline
	      $\#$dofs & $H/h$ & $\kappa$ & It. & $\kappa$ & It. & $\kappa$ & It. \\ \hline
	      	7548   & 5  & >1.e16  & >1000 & 47  &  {20} & 47  &  {18}\\
		14368  & 7  & >1.e16  & >1000 & 69  &  {20} & 65  &  {19}\\
		38100  & 10 & >1.e16  & >1000 & 165 &  {32} & 152  &  {29}\\
		142732 & 16 & >1.e16  & >1000 & 405 &  {38} & 368 &  {34}\\ \hline\hline 

		      \mc{2}{|c|}{ALG. B} & \mc{6}{|c|}{ }\\ \hline
		7548  & 5  & >1.e16 & >1000 & 1.7  &  {7} & 1.6  &  {7}\\
		14368 &  7 & >1.e16 & >1000 & 1.8 &  {7} & 1.7 &  {7}\\
		38100 & 10 & >1.e16 & >1000 & 2.1 &  {8} & 2.3 &  {8}\\
		142732& 16 & >1.e16 & >1000 & 4.4 &  {9} & 3.2 &  {11}\\\hline
		
  \end{tabular}
   \caption{3D example with $\deg = 4$ jumping diffusion coefficient. Dependence of the condition number $\kappa$ and the number It. of iterations on $H/h$ for the unpreconditioned system and preconditioned system with coefficient and stiffness scaling. Choice of primal variables: vertex evaluation (upper table), vertex evaluation and edge averages and face averages (lower table).}
\label{table:jump3D}
  \end{table}
\subsection{Dependence on $\deg$}
We want to examine the dependence of the condition number on the B-Spline degree $\deg$, although the presented theory in \secref{sec:AnalysisBDDC} does not cover the dependence of IETI-DP preconditioned system on $\deg$. 
We note that in our implementation the degree elevation yields an increase in the multiplicity of the knots within each step, 
resulting in $C^1$ smoothness on each patch.
The computational domain is chosen as the 2D YETI-footprint presented in \figref{fig:YETI_footprint} and the diffusion coefficient is chosen to be globally constant.  The results are summarized in \tabref{table:pDependence2D}, where we observe a possibly logarithmic dependence of the condition number on the polynomial degree in case of the coefficient scaling as well as of the above mention version of the stiffness scaling. The numerical experiments depict a linear dependence in case of the regular stiffness scaling \equref{sec:IETI_precond}, see \figref{fig:pDependence2D}. 
\begin{table}[h!]
\centering
\begin{tabular}{|c| c|cc|cc|cc|cc|}\hline
      \mc{2}{|c|}{ALG. C} & \mc{2}{|c|}{unprec. F} & \mc{2}{|c|}{coeff. scal} & \mc{2}{|c|}{stiffness scal.}& \mc{2}{|c|}{stiff. scal. modif.} \\ \hline
	      $\#$dofs & degree & $\kappa$ & It. & $\kappa$ & It. & $\kappa$ & It. & $\kappa$ & It.\\ \hline
800    &  2  &  3.24  & 16    &   1.65  & 8    &     1.64  & 8  & 1.63 & 8   \\      
2364   &  3  &  8.08  & 28    &   1.71  & 10   &     1.69  & 10 & 1.7  & 10  \\
4728   &  4  &  24.2  & 51    &   1.83  & 11   &     1.88  & 12 & 1.83 & 11  \\
7892   &  5  &  82.8  & 86    &   2.03  & 13   &     2.16  & 12 & 2.01 & 12  \\ 
11856  &  6  &  296   & 140   &   2.23  & 13   &     2.42  & 14 & 2.18 & 13  \\
16620  &  7  &  1082   & 230   &   2.41  & 14   &     2.66  & 14 & 2.34 & 14  \\ 
22184  &  8  &  4021   & 371   &   2.57  & 15   &     2.88  & 15 & 2.49 & 15  \\ 
28548  &  9  &  15034  & 594   &   2.72  & 15   &     3.09  & 16 & 2.63 & 15  \\
35712  &  10 &  56773  & 968   &   2.87  & 16   &     3.28  & 16 & 2.75 & 15  \\ \hline 	
  \end{tabular}
   \caption{2D example with fixed initial mesh and homogeneous diffusion coefficient. Dependence of the condition number $\kappa$ and the number It. of iterations on $H/h$ for the unpreconditioned system and preconditioned system with coefficient and stiffness scaling. Choice of primal variables: vertex evaluation and edge averages.}
\label{table:pDependence2D}
  \end{table}
    \begin{figure}[h!]
  \begin{center}
   \includegraphics[width=0.8\textwidth]{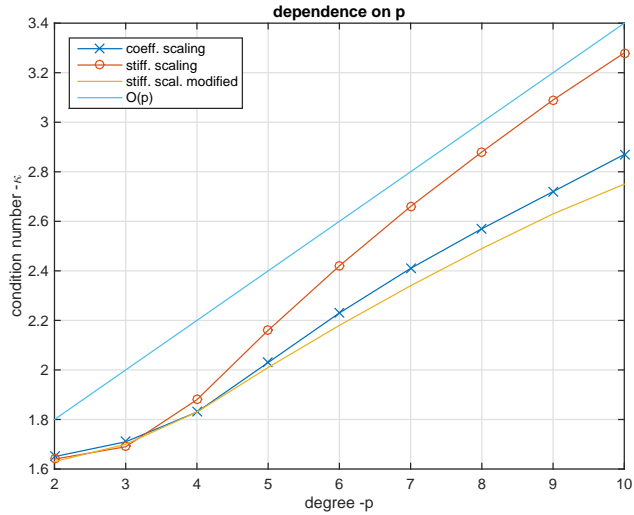}
  \caption{2D example with fixed initial mesh and homogeneous diffusion coefficient. Condition number $\kappa$ as a function of polynomial degree. Choice of primal variables: vertex evaluation and edge averages.}
    \label{fig:pDependence2D}
    \end{center}
  \end{figure}
  \subsection{Performance}
  The algorithm was tested on a Desktop PC with an Intel(R) Xeon(R) CPU E5-1650 v2 @ 3.50GHz and 16 GB main memory. As already mentioned at the beginning of this section, we used the open source library G+SMO for the materialization of the code. Moreover, we make use of the Sparse-LU factorization of the open source library ``Eigen''\footnote{\url{http://eigen.tuxfamily.org/index.php?title=Main_Page}} for the local solvers. The timings presented in \tabref{table:timings} are obtained from a sequential implementation of the code. We choose the same setting as presented in \tabref{table:jump2D} with Algorithm C. However, we do one more refinement steps and obtain $121824$ total degrees of freedom, $1692$ Lagrange multipliers, and on each patch approximate $4900$ local degrees of freedom. We select a run with coefficient scaling and obtain a condition number of $\kappa = 3.53$ and $11$ iterations.
  
  We remark that about $90\,\%$ of the runtime is used for the assembling part of the program including the Schur complement computations. In more detail, most of the time is spent for calculating the LU-factorizations of the local matrices. This indicates the importance of replacing the direct solver with inexact solvers on each patch, see, e.g., \cite{widlundLi2007_inexactSolvers},\cite{KlawonnRheinbach2007_inexactSolvers}. Furthermore, we want to note, that especially in 3D, an additional bottleneck is the memory demand of the direct solvers. 
  \begin{table}[h!]
   \centering
   \begin{tabular}{|l|r|r|}
   \hline
      & Wall-clock time & relative time in \% \\ \hline \hline
    Preparing the bookkeeping & 0.011 s &  0.03 \\ \hline\hline
    Assembling all patch local $\Kb\sMP$ &  6.2 s & 15.42 \\
    Partitioning w.r.t. $B$ and $I$ & 0.087 s & 0.22\\
    Assembling C & 0.016 s &0.04 \\
    Calculating LU -fact. of $\KII$& 15 s & 37.31\\
    Calculating LU -fact. of $\MatTwo{K\sMP}{{C\sMP}^T}{C\sMP}{0}$& 15 s & 37.31\\
    Assembling and LU-fact of $\SPP$ & 0.46 s& 1.14\\
    Assemble rhs. &  0.094 s& 0.23 \\ \hline
    Total assembling & 37 s & 92.04  \\ \hline \hline
    One PCG iteration & 0.22 s & - \\ \hline
    Solving the system & 2.5 s & 6.22 \\ \hline \hline
    Calculating the solution $\ub$ & 0.5 s& 1.24 \\ \hline\hline
    Total spent time& 40.2 s & 100.00 \\\hline
   \end{tabular}
   \caption{Timings of the 2D example  with coefficient scaling, Algorithm C and one more refinement compared to \tabref{table:jump2D}. The discrete problem consists of $121824$ total degrees of freedom, $1692$ Lagrange multipliers, and on each patch approximate $4900$ local degrees of freedom. Middle column presents the absolute spent time and right column the relative one.}
\label{table:timings}
  \end{table}
\section{Conclusions}
\label{sec:conclusions}
We have derived condition number estimates for the IETI-DP method and extended the existent theory to domains which cannot be represented by a single geometrical mapping. Due to the fact, that we only considered open knot vectors, we could identify basis function on the interface and on the interior. This assumption implies that the discrete solution is only $C^0$ smooth across patch interfaces. However, under this assumption, we were able to find an improved condition number bound of the IETI-DP method using the Dirichlet preconditioner with stiffness scaling.
Numerical examples with two and three dimensional domains, different choices of primal variables and different scaling methods confirmed the theoretical results presented in \secref{sec:IETI}. Additionally, we investigated the B-Spline degree dependence of the preconditioned system. 
Moreover, the numerical results indicate the robustness with respect to jumping
diffusion coefficients across the interfaces.
We have obtained similar numerical results for solving multipatch discontinuous Galerkin (dG) IgA schemes, proposed and investigated in \cite{HL:LangerToulopoulos:2014a}, by means of IETI-DP methods following the approach developed by  \cite{dryja2013fetiDP} for a composite finite element and dG method. Our results in the IETI-DP versions for solving multipatch dG IgA equations will be published in a forthcoming paper.
%
%
%
%

\bibliographystyle{unsrt}

\bibliography{IETI_bib.bib}

\end{document}